\newtheorem{theorem}{Theorem}
\newtheorem{definition}[theorem]{Definition}
\newtheorem{proposition}[theorem]{Proposition}
\newtheorem{remark}[theorem]{Remark}
\def \RR{\mathbb{R}}
\def \E{\mathbb{E}}
\def\Nc{{\cal N}}
\def\Law{\mathcal L}
\def\be{\begin{eqnarray}}
\def\ee{\end{eqnarray}}
\def\b*{\begin{eqnarray*}}
\def\e*{\end{eqnarray*}}
\def\reff#1{{\rm(\ref{#1})}}
\newcommand{\mbf}{\mathbf{m}}
\newcommand{\ebf}{\mathbf{e}}
\numberwithin{question}{section}
\def\ind{\mathrm{ind}}
\def\grad{\nabla}
\title{Large Banking Systems with Default and Recovery: \\ A Mean Field Game Model}
\author{ 
ROMUALD ELIE 
\thanks{~Universit\'e Paris-Est Marne-la-Vall\'ee, France (E-mail:    {\it romuald.elie@univ-mlv.fr}).}
\and 
TOMOYUKI ICHIBA 
\thanks{~Department of Statistics and Applied Probability, South Hall, University of California, Santa Barbara, CA 93106, USA (E-mail:    {\it ichiba@pstat.ucsb.edu}). Research supported in part by the National Science Foundation under grants NSF-DMS-13-13373 and DMS-1615229
          } 
\and 
MATHIEU LAURIERE 
\thanks{~Department of Operations Research and Financial Engineering, Sherrerd Hall, Princeton University, Princeton, NJ 08540, USA (E-mail:    {\it lauriere@princeton.edu}).}
}
\date{}
\begin{document}

\maketitle


\begin{abstract}
We consider a mean-field model for large banking systems, which takes into account default and recovery of the institutions. Building on models used for groups of interacting neurons, we first study a McKean-Vlasov dynamics and its evolutionary Fokker-Planck equation in which the mean-field interactions occur through a mean-reverting term and through a hitting time corresponding to a default level. The latter feature reflects the impact of a financial institution's default on the global distribution of reserves in the banking system. The systemic risk problem of financial institutions is understood as a blow-up phenomenon of the Fokker-Planck equation. Then, we incorporate in the model an optimization component by letting the institutions control part of their dynamics in order to minimize their expected risk. Phrasing this optimization problem as a mean-field game, we provide an explicit solution in a special case and, in the general case, we report numerical experiments based on a finite difference scheme.
\end{abstract}


\section{Introduction}

Financial institutions form a highly connected network through monetary flow and complex dependencies. Each institution is trying to maximize its expected return objective over time, while the aggregation of all investment strategies generates feedback loops and results in some overall patterns of the financial market. The institutions are all competing against each other as players in a financial interacting game. When the number of players becomes large as we can observe in the current fully connected worldwide banking system, individual interactions become intractable while the global patterns become more apparent. Our goal in this paper is to capture the origins of these patterns in a simple mathematical and numerical setup, described by a mean-field game banking system, taking into account births and defaults of  financial institutions.

The set up of the birth and default dynamics considered here is inspired by the neuron firing models \cite{MR2853216,MR3322871,MR3349003,MR3411723} as well as mathematical physics models \cite{nadtochiy2018mean,MR3910001}; see also \cite{hambly2018mckean,hambly2018spde,kaushansky2018semi,ledger2018mercy}. These models aim at describing how the electric potential of a network of neurons evolves over time. A salient feature occurs whenever the potential of one single neuron reaches a given threshold level, leading to a so-called firing event where all the energy accumulated is transmitted to neighboring neurons in the network. In these models, it has been observed that a few parameters characterize some sort of phase transition period, according to which steady states or blow-up phenomena may occur or not. 
A blow-up typically corresponds to the situation in which the firing of one neuron immediately triggers the firing of some other neurons and so on, resulting in a cascade of firings. In the macroscopic limit, this translates into a jump in the instantaneous rate of firings. Handling in a mathematically rigorous way such phenomenon is an active and very challenging research area, in which recent progress has been made for instance in~\cite{delarue2019stefan} in connection with the aforementioned line of work.

Drawing an analogy between neurons firing and banks defaulting, the blow-up phenomenon appears as a natural tool for describing systemic patterns of a financial crisis: complicated interactions between poorly regulated banks driven by selfish objectives can sometimes lead to cascade of defaults. 
This line of modeling has already been pointed out e.g. in~\cite{talkDelarue2015,MR3910001} to describe the evolution of a financial system. Building on this type of model, our main contribution in this paper is to incorporate a proper optimization component for each bank, by letting each bank influence the dynamics of its state so as to minimize a chosen idiosyncratic risk criterion, in the spirit of \cite{MR3325083}. \\

More specifically, we focus here on the mean-field limit of the following toy model: The banking system consists of  $\,N\,$ banks, where each bank~$\,i\,$ owns liquid assets as monetary reserve evolving continuously in time. Let denote by $\,(X_{t}^{i})_{t\ge 0}$ the non-negative level of cash reserve (in liquid assets) of bank $\,i\,$, for $\,i \, =\, 1, \ldots , N\,$ and introduce $\, (\overline{X}_{t})_{t\ge 0} \,$ the average level of cash reserves over the full banking system, i.e., $\, \overline{X}_{t} \, :=\, (X_{t}^{1} + \cdots + X_{t}^{N}) \, / \, N\,$ at time $\,t \ge 0\,$. 

We assume for simplicity that the diffusion dynamics of each cash reserve $\,X^{i}\,$ is of mean field type, i.e. it depends on the global banking system through the empirical distribution of the cash reserve levels, and more specifically through their empirical average $\, \overline{X}\,$. More precisely, the diffusive behavior of $\,X^{i}\,$ at time $t$ is locally specified by a drift function $\, b : \mathbb R^{2}_{+} \to \mathbb R\,$ of its value $\, X_{t}^{i}\,$ together with the running average $\, \overline{X}_{t}\,$ of the system. The sample path $\, t\to X_{t}^{i} \,$ is assumed to be right continuous with left limits.

The bank defaults are modeled in the following way: if the amount $\, X_{t}^{i_{0}} \,$ of liquid assets of  bank  $\,i_{0} \,$ reaches the threshold level $0$ at time $\,t_{0}\,$, this bank $\,i_{0}\,$ is defaulted from the system. A systemic effect induces a financial shock to each institution of the system, so that the cash reserve $X^j$ of every bank $\,j (\neq i_{0}) \,$ suffers at time ${t_{0}-}$ a downward jump of size $\, \overline{X}_{t_{0}-} / N \,$. Instantaneously and for ease of modeling, a new institution is also created in the market with cash reserve at the average level $\overline{X}_{t_{0}-}$, so that the number of banks in the financial system remains constant. 
 For notational simplicity, this new bank keeps the same number $\,i_{0} \,$, i.e., $\, X_{t_{0}-}^{i_{0}} \, =\, 0 \, , X_{t_{0}+}^{i_{0}} \, :=\, \overline{X}_{t_{0}}\, $. With initial configuration $\, x_{0} \, :=\, (X_{0}^{1}, \ldots , X_{0}^{N}) \in (0, \infty)^{N}\,$, the resulting dynamics may be depicted by the following system of stochastic differential equations 
\begin{equation} \label{eq: PS} 
\begin{cases}
\displaystyle
	X_{t}^{i} \, &=\, \displaystyle 
	X_{0}^{i} + \int^{t}_{0} b (X_{s}^{i} , \overline{X}_{s}) {\mathrm d} s + \sigma W_{t}^{i} + \int^{t}_{0} \overline{X}_{s-} \Big( {\mathrm d} M_{s}^{i} - \frac{1}{\, N\, } \sum_{j \neq i} {\mathrm d} M_{s}^{j} \Big) \, ; \quad t \ge 0 \, ,  
	\\
\displaystyle
	M_{t}^{i} \, &:=\, \displaystyle
	\sum_{k=1 }^{\infty} {\bf 1}_{\{\tau_{k}^{i} \le t\}} \, , \quad \tau_{k}^{i} \, :=\, \inf \Big \{ s > \tau_{k-1}^{i} : \, X_{s-}^{i} - \frac{\overline{X}_{s-}}{N} \sum_{j \neq i } (M_{s}^{j} - M_{s-}^{j}) \, \le \,  0 \Big\} \, ; \quad k \in \mathbb N \, , 
\end{cases}
\end{equation}
for $\,i \, =\, 1, \ldots , N\,$, where $\, W \, :=\, (W^{1}, \ldots , W^{N})\,$, $\, t\ge 0\,$ is a standard $N$-dimensional Brownian motion, $\, \overline{X}\,$ is the average of $\,X \, :=\, (X^{1}, \ldots , X^{N})\,$, $\, M_{t}^{i}\,$ is the cumulative number of defaults of bank $i$ by time $\,t \ge 0 \,$, and $\,\tau_{k}^{i}\,$ is its $\,k\,$-th default time with $\, \tau_{0}^{i} \, =\, 0\,$. Although, in our toy model, the drift coefficient $\, b\,$ depends on both the state $X$ of a bank together with the average $\overline{X}$ whereas the diffusion coefficient is a positive constant $\sigma$, one can in general consider more complicated dependence on the coefficients, such as time-dependent drift and diffusion coefficients. The system~\eqref{eq: PS} of $\,N\,$ banks includes mean-field interactions through the drift function $\,b (\cdot) \,$ as well as via the cumulative number of defaults $M$. The mathematical analysis of such system raises some delicate and technical issues, due to mean-field interactions together with multiple defaults.\\

We then turn our attention to the more realistic situation in which banks are not passively following the dynamics~\eqref{eq: PS} but are able  to partly control their drifts and seek to minimize the sum of expected running costs (possibly together with a terminal default cost), occurring at default time. Compared with the model~\eqref{eq: PS}, the drift of bank $i$ incorporates a linear dependence on the control $\xi^i_t$ used by this bank. The running cost rate $\,f\,$ at time $\,t \ge 0 \,$  of bank $i\,$ depends on the control $\,\xi^{i}_{t}\,$, the monetary reserve $\,X^{i}_{t}\,$ as well as the empirical distribution $\, m_{N, t}(\cdot) \, :=\, \sum_{i=1}^{N} \delta_{X_{t}^{i}} (\cdot)\, /\, N\, $ of the system. This allows in particular to penalize strong deviations from the average of the banking system. Here, we borrow the running cost functional from a model introduced in~\cite{MR3325083} for systemic risk: The running cost functional takes the form of a quadratic function. Moreover, the running cost is discounted at rate $\,r\,$. Thus, over the time interval $\,[0, T]\,$, each bank $\,i \,$ seeks to minimize 
\[
	\mathbb E \left[ \int^{\tau^{i} \wedge T}_{0} e^{-r s} f(X^{i}_{s}, m_{N, s}, \xi^{i}_{s} ) {\mathrm d} s  \right] \, ,
\]
where $X$ is controlled by $\xi$ via its drift. In this context, we look for a Nash equilibrium, in the sense that when bank $\,i\,$ performs the optimization, the controls used by the other banks $\,j \neq i\,$ are fixed. This modeling of systemic risk hence identifies to an $\,N\,$-player stochastic game, as the other players policy and default rate modify the banking system dynamics, under which each bank optimizes its policy and resulting monetary reserve. 
As the size of the system becomes large, i.e., $\,N \to \infty \,$, the difficulty and complexity of the mathematical analysis increase rapidly. In order to obtain a tractable approximation of a large banking system, we rely on the mean field game (MFG) paradigm.  Mean field games were introduced by Lasry and Lions~\cite{PLLCDF,MR2295621,MR2269875,MR2271747} and Caines, Huang and Malham\'e~\cite{HuangCainesMalhame-2003-individual-mass-wireless,MR2352434-HuangCainesMalhame-2007-LQG,MR2346927-HuangCainesMalhame-2006-closedLoop}. The interested reader is referred to the recently published books \cite{MR3134900,MR3752669,MR3753660} and the references therein. \\

The rest of the paper is organized as follows. In section~\ref{sec: 2}, we first show that the $N$-agent system~\eqref{eq: PS} is well defined using a notion of physical solution borrowed from~\cite{MR3322871}. We then informally derive the limiting mean-field system as $\,N \to \infty\,$, as well as the corresponding nonlinear Fokker-Plank equation.  In the limiting system, we show in Theorem~\ref{thm: Blowup} that blow-up may occur when the initial distribution is concentrated near the origin. This result shades in particular a new light on the blow-up phenomenon described in \cite{MR2853216}. Importantly, we observe that the Fokker-Plank system has an explicit stationary solution, as derived in  Theorem~\ref{thm:characterize-statio-p-e0}.  We conclude section~\ref{sec: 2} with numerical results for the dynamics of the system. In section~\ref{sec: 3}, we incorporate the optimization component. We formulate the mean-field game in section~\ref{sec: 3.1}. Its solution is characterized by a Hamilton-Jacobi-Bellman (HJB) equation coupled with a Fokker Plank (FP) equation with well suited boundary conditions at the boundary of the domain. In section~\ref{sec: 3.3}, we derive an explicit solution for a stationary mean field game from the PDE system, where Theorem~\ref{thm:characterize-statio-p-e0} is used to determine the probability distribution. This stationary solution serves as a baseline for our numerical study. In section~\ref{sec: 3.4}, we provide a discrete numerical scheme for the HJB-FP system, building on~\cite{MR2679575}. Finally, numerical results in a non-stationary regime are presented in section~\ref{sec: 3.5}.


\section{Fokker Planck equation for the particle system}
\label{sec: 2}

\subsection{Construction of Physical Solution} 

\subsubsection{Modeling bank interactions and defaults}

Throughout this section, we assume for simplicity that $\sigma=1$, and in this subsection, let us assume that $\,b(\cdot, \cdot) \,$ in (\ref{eq: PS}) is (globally) Lipschitz continuous on $\, \mathbb R^{2}_{+}\,$, i.e., there exists a constant $\, \kappa > 0 \,$ such that 
\begin{equation} \label{eq: BL}
\lvert b(x_{1}, m_{1}) - b(x_{2}, m_{2})  \rvert \le \kappa \big ( \lvert x_{1} - x_{2} \rvert + \lvert m_{1} - m_{2} \rvert \big) \, 
\end{equation}
for all $\,x_{1}, x_{2}, m_{1}, m_{2} \in \mathbb R_{+}\,$, and impose the following condition on the drift function $\,b(\cdot, \cdot)\,$: 
\begin{equation} \label{eq: DC}
 \sum_{i=1}^{N} b( x^{i}, \overline{x}) \, \equiv\,  0 \, 
\end{equation}
for every $\, x := (x^{1}, \ldots , x^{N}) \in \mathbb R_{+}^{N}\,$ and $\, \overline{x} \, :=\, (x^{1} + \cdots + x^{N}) \, / \, N\,$.  This condition holds for instance if $b( x^{i}, \overline{x}) = x^{i} - \overline{x}$ is a linear mean-reverting drift, and naturally translates a global stability of the monetary level for the whole financial system.\\ 

Given a standard Brownian motion $\, W_{\cdot}\,$, we shall consider a system $\, ( X_{\cdot} \, :=\, (X_{\cdot}^{1}, \ldots , X_{\cdot}^{N}), M_{\cdot} \, :=\, (M_{\cdot}^{1}, \ldots , M_{\cdot}^{N})) \,$ described by (\ref{eq: PS}) together with conditions (\ref{eq: BL})-(\ref{eq: DC}) on a filtered probability space $\,(\Omega, \mathcal F, \mathbb F, \mathbb P)\,$, with filtration $\, \mathbb F \, :=\, (\mathcal F_{t}, t \ge 0)\,$. In particular, we are concerned with identifying cases where dynamics (\ref{eq: PS}) might induce multiple simultaneous defaults with positive probability, i.e., 
\[
\mathbb P \big ( \exists (i, j), \, \,\exists t \in [0, \infty) \, \,  \text{ such that } \, \, X_{t}^{i} \, =\, X_{t}^{j} \, =\,  0 \big) > 0 \, . 
\]

In the dynamics (\ref{eq: PS}) of $\,X_{\cdot}\,$ the last term containing the counting process $\,M_{\cdot}\,$ describes how those banks behave at the default event times. In principle, there are other specifications of their behaviors. For example, replacing $\,1/N\,$ by $\, 1/ (N-1)\,$ in (\ref{eq: PS}), we observe the dynamics of $\, X_{\cdot} \, =\, (X_{\cdot}^{1}, \ldots , X_{\cdot}^{N})\,$ becomes 
\begin{equation} \label{eq: PSa}
{\mathrm d} X_{t}^{i} \, =\, b (X_{t}^{i} , \overline{X}_{t}) {\mathrm d} t + {\mathrm d} W_{t}^{i} + \overline{X}_{t-} \Big( {\mathrm d} M_{t}^{i} - \frac{1}{\, N-1\, } \sum_{j \neq i} {\mathrm d} M_{t}^{j} \Big)
\end{equation}
for $\,i \, =\, 1, \ldots , N\,$, $\, t \ge 0\,$, and by direct calculations we may verify that under (\ref{eq: DC}) the average process $\, \overline{X}_{\cdot}\,$ of $\, X_{\cdot}\,$ in (\ref{eq: PSa}) moves as a Brownian motion $\, \overline{X}_{\cdot} \, = \, \overline{X}_{0} + (1 \, /\,  N)\sum_{i=1}^{N} W_{\cdot}^{i} \,$. If we consider a (weak) solution on a probability space $\, (\Omega, \mathcal F, \mathbb F, \mathbb P ) \,$ of the system $\, X_{\cdot}\,$ with dynamics (\ref{eq: PSa}) which takes values in $\,[0, \infty)^{N}\,$ only, then  the first passage time $\, \inf \{ t \ge 0 : \overline{X}(t) \, =\,  0 \} \,$ of $\,0\,$ for $\, \overline{X}_{\cdot}\,$ is finite almost surely and hence, the $\,N\,$-dimensional process $\, X_{\cdot}\,$ in (\ref{eq: PSa}) hits the origin almost surely under (\ref{eq: DC}), i.e., 
\begin{equation} \label{eq: MD}
\mathbb P \big ( \exists t \in [0, \infty) \, \, \text{ such that } \, \, X_{t}^{1} \, =\, \cdots \, =\, X_{t}^{N} \, =\, 0 \big ) \, =\,  1 \, . 
\end{equation}
Another example with such property occurs with the following dynamics for $\,i \, =\, 1, \ldots , N\,$ 
\begin{equation} \label{eq: PSb} 
{\mathrm d} X_{t}^{i} \, =\, b (X_{t}^{i} , \overline{X}_{t}) {\mathrm d} t + {\mathrm d} W_{t}^{i} + \overline{X}_{t-} \Big( {\mathrm d} M_{t}^{i} - \frac{1}{\, N\, } \sum_{j =1}^{N} {\mathrm d} M_{t}^{j} \Big) 
\end{equation}
with (\ref{eq: DC}). Again, this dynamics implies that  the average process $\, \overline{X}_{\cdot}\,$ is a Brownian motion, and hence the $\,N\,$-dimensional process $\,X_{\cdot}\,$ in (\ref{eq: PSb}) entails the property (\ref{eq: MD}) with the above reasoning. 

On the other hand, in the dynamics (\ref{eq: PS}) the average process $\, \overline{X}_{\cdot}\,$ jumps up at the default event times as we observe 
\begin{equation} \label{eq: XBAR}
\,  \overline{X}_{\cdot}\, =\,  \overline{X}_{0} + \frac{\,1\,}{\,N\,}\sum_{i=1}^{N} W_{\cdot}^{i} + \frac{\,1\,}{\,N^{2}\,}\sum_{i=1}^{N} \int^{\cdot}_{0}\overline{X}_{s-} {\mathrm d} M_{s}^{i} \, =\, \overline{X}_{0} + \overline{W}_{\cdot} + \frac{\,1\,}{\,N\,} \int^{\cdot}_{0} \overline{X}_{s-} {\mathrm d} \overline{M}_{s}, 
\end{equation}
where $\, \overline{M}_{\cdot}\,$ and $\, \overline{W}_{\cdot}\,$ are the sample averages of $\, M_{\cdot}\,$ and $\, W_{\cdot}\,$, respectively. 
In between default times, $\,\overline{X}_{\cdot}\,$ behaves as the $\,N\,$-dimensional diffusion with drift $\,b (\cdot, \cdot)\,$ and unit diffusion coefficients. When there is a jump (default) in the process $\, (X_{\cdot}, M_{\cdot}) \,$ at time $\,t\,$, the jump size $\,X_{t}^{i}\,$ of defaulted bank $\,i\,$ is given by 
\begin{equation} \label{eq: JS} 
X_{t}^{i} - X^{i}_{t-} \, =\, \overline{X}_{t-} \cdot \Big( M_{t}^{i} - M^{i}_{t-} - \frac{\,1\,}{\,N\,} \sum_{j \neq i} \big( M^{j} _{t} - M^{j}_{t-}\big)  \Big) \, ; \quad i \, =\, 1, \ldots , N \, . 
\end{equation}
Economically speaking, in case of defaults, the creation of a new financial institution requires additional funding from another global financial agency (e.g., government) outside the system.

\subsubsection{Connections with the Neuron firing model of Delarue et al. \cite{MR3349003,MR3322871}}

 Up to a well chosen transformation, we now observe that the dynamics of the financial banking system considered here, shares some similarity with the dynamics derived in the firing Neuronal systems of Delarue  et al. \cite{MR3349003,MR3322871}.
  
Note that the value of $\,X_{t} \,$ lies in the state space $\, [0, \infty)^{N}\,$ for each $\,t \ge 0\,$. Thus if we change the state space from $\, [0, \infty)^{N}\,$ to $\,(-\infty, 1]^N\,$ by transforming $\,X_{t}\,$ to $\, \widehat{X}_{t} \, :=\,  ( \widehat{X}_{t}^{1}, \ldots , \widehat{X}_{t}^{N}) \,$ with $\, \widehat{X}_{\cdot}^{i} \, :=\, (\overline{X}_{\cdot} - X_{\cdot}^{i}) \, /\, \overline{X}_{\cdot} \,$, $\,i \, =\, 1, \ldots , N\,$, then we see  $\, \sum_{i=1}^{N} \widehat{X}_{t}^{i} \, \equiv\,  0 \,$ and by \textsc{It\^o}'s rule the dynamics of $\, \widehat{X}_{\cdot}\,$ is given by 
\begin{equation} \label{eq: XHAT}
{\mathrm d} \widehat{X}_{t}^{i} \, =\,  \Big( - \frac{\, b ( \overline{X}_{t} ( 1 - \widehat{X}_{t}^{i} ), \overline{X}_{t})\, }{ \overline{X}_{t}} + \frac{ \widehat{X}_{t}^{i}}{\, N \overline{X}_{t}^{2}\, }\Big) {\mathrm d} t + {\mathrm d} \widehat{W}^{i}_{t}  - \Big( 1 + \frac{1}{N}\Big) {\mathrm d} \widehat{M}_{t}^{i} + \Big( 1 +  \frac{ 1 - \widehat{X}_{t-}^{i}\, }{N} \Big) \frac{1}{\, N\, } \sum_{j =1}^{N} {\mathrm d} \widehat{M}_{t}^{j}  \, ,  
\end{equation}
\[
{\mathrm d} \widehat{W}_{t}^{i} \, :=\, \Big(  - \frac{1}{\, \overline{X}_{t} \, } \Big) {\mathrm d} W_{t}^{i} + \Big( \frac{\, 1  - \widehat{X}_{t}^{i} \, }{ \, \overline{X}_{t}} \Big) \frac{1}{N} \sum_{j=1}^{N} {\mathrm d} W_{t}^{j}\, \, , \quad 
\widehat{M}_{t}^{i} \, :=\, \sum_{k=1}^{\infty} {\bf 1}_{\{ \widehat{\tau}_{k}^{i} \le t\}} \, \equiv\, M_{t}^{i} \, , 
\]
\[
  \widehat{\tau}_{k}^{i} \, :=\,  \inf \Big \{ t > \widehat{\tau}_{k-1}^{i} : \widehat{X}_{t}^{i} + \frac{1}{\, N\, }\sum_{j\neq i} \big( \widehat{M}_{s}^{j} - \widehat{M}_{s-}^{j}\big) \ge 1 \Big\} \, =\,  \tau_{k}^{i}\, , \quad 
\widehat{\tau}_{0}^{i} \equiv 0 \,;  \quad \, i \, =\, 1, \ldots , N\,, \,k \in \mathbb N\,
\]
until the time $\, \inf \{ t : \overline{X}_{t}\, =\, 0\}\,$. 

This system $\, ( \widehat{X}^{1}_{\cdot}, \ldots , \widehat{X}^{N}_{\cdot}, \overline{X}_{\cdot}) \,$ defined by (\ref{eq: XBAR}) and (\ref{eq: XHAT}) resembles with the particle system $\, \widetilde{X}_{t} \, :=\, (\widetilde{X}_{t}^{1}, \ldots , \widetilde{X}_{t}^{N}) \,$ for neurons studied in 
\cite{MR3349003,MR3322871},    
namely, 
\begin{equation} \label{eq: DIRT}
\begin{split}
{\mathrm d} \widetilde{X}_{t}^{i} \, &=\,  \mathrm b ( \widetilde{X}_{t}^{i}) {\mathrm d} t + {\mathrm d} \widetilde{W}_{t}^{i} -  {\mathrm d} \widetilde{M}_{t}^{i}  + \frac{ \, \alpha \, }{N} \sum_{j=1}^{N} {\mathrm d} \widetilde{M}_{t}^{j} \,  
\end{split}
\end{equation}
for $\,  i \, =\, 1, \ldots , N, \, t \ge 0 \, $ in modeling a very large network of interacting spiking neurons. Here $\, \mathrm b: (-\infty, 1] \to \mathbb R \,$ is a Lipschitz continuous function, $\, \widetilde{W}_{t} := ( \widetilde{W}_{t}^{1}, \ldots , \widetilde{W}_{t}^{N})\,$, $\, t \ge 0 \,$ is the standard Brownian motion and  
\[
\widetilde{M}_{t}^{i} \, :=\, \sum_{k=1}^{\infty} {\bf 1}_{\{ \widetilde{\tau}_{k}^{i} \le t \}} \, , \quad \widetilde{\tau}_{k}^{i} \, :=\,  \inf \Big \{ s > \widetilde{\tau}_{k-1}^{i} : \widetilde{X}_{s-} + \frac{\, \alpha \, }{\, N\, } \sum_{j=1}^{N} ( \widetilde{M}_{s}^{j} - \widetilde{M}_{s-}^{j}) \ge 1 \Big \} \, ; \quad k \in \mathbb N \, 
\]
with $\,\widetilde{ \tau}_{0}^{i} \, =\, 0\,$, $\,i \, =\, 1, \ldots , N\,$ for $\,t \ge 0 \,$. It is known that if the parameter $\, \alpha \,$ lies in $\, (0, 1)\,$, there exists a unique solution (called ``physical solution'') to (\ref{eq: DIRT}), such that 
\[
\widetilde{X}_{t}^{i} \, =\, X_{t-}^{i} + \frac{\, \alpha \,  \lvert \widetilde{\Gamma}_{t} \rvert }{N} \text{ if } i \not \in \widetilde{\Gamma}_{t} \, , \quad \widetilde{X}_{t}^{i} \, =\, X_{t-}^{i} + \frac{\, \alpha \, \lvert \widetilde{\Gamma}_{t} \rvert }{N} - 1 \text{ if } i  \in \widetilde{\Gamma}_{t}  \, , 
\]
where $\, \lvert \widetilde{\Gamma}_{t} \rvert\,$ is the cardinality of $\, \widetilde{\Gamma}_{t}\,$,  a random subset of indexes $\,\{1, \ldots , N\}\,$ defined by the union 
\[
\widetilde{\Gamma}_{t} \, :=\, \bigcup_{0 \le k \le N-1} \widetilde{\Gamma}_{t, k} \, , 
\]
of recursively defined sets $\, \widetilde{\Gamma}_{t, 0} \, :=\, \{ i \in \{1 , \ldots , N\} : X_{t-}^{i} \, =\,  1\}\,$, 
\[
\widetilde{\Gamma}_{t, k+1} \, :=\, \Big \{ i \in \{ 1, \ldots , N \} \setminus \bigcup_{\ell=0}^{k} \widetilde{\Gamma}_{t,\ell} \, :\,  X_{t-}^{i} + \frac{ \alpha\, }{N}  \Big \lvert \bigcup_{ \ell=0}^{k} \widetilde{\Gamma}_{t, k}  \Big \rvert  \ge 1 \Big \}  \, , 
\]
for $\,k \, =\, 0, 1, \ldots , N-2\,$, $\,t \ge 0 \,$. Again here $\, \lvert \, \cdot \, \rvert \,$ represents the cardinality of set.

\subsubsection{Physical solutions}

In a similar spirit, we shall construct a solution to (\ref{eq: PS}) with a specific boundary behavior at default times. Let us define the following map $\, \Phi (x) \, :=\, (\Phi^{1}(x), \ldots , \Phi^{N}(x)): [0, \infty) ^{N} \mapsto [0, \infty)^{N}\,$ and set-valued function $\, \Gamma : \mathbb R_{+}^{N} \to \{ 1, \ldots , N\} \,$ defined by $\,\Gamma_{0}(x) \, :=\, \{ i \in \{1, \ldots , N\} : x^{i} \, =\, 0 \} \,$, 
\[
\Gamma_{k+1} (x) \, :=\, \Big \{ i \in \{1, \ldots , N \} \setminus \bigcup_{\ell=1}^{k} \Gamma_{\ell}(x) \, : \, x^{i} - \frac{\overline{x}}{N}  \cdot  \Big \lvert \bigcup_{\ell=1}^{k} \Gamma_{\ell} (x) \Big \rvert \le 0 \Big\} \, ; \quad k = 0, 1, 2, \ldots , N-3 
\]
\begin{equation}  \label{eq: MAP}
\Gamma(x) \, :=\, \bigcup_{k=0}^{N-2} \Gamma_{k}(x) \, , \quad 
\Phi^{i}(x) \, :=\, x^{i} + \overline{x} \, \Big( \Big( 1 + \frac{1}{\, N\, } \Big) \cdot {\bf 1}_{ \{ i \in \Gamma(x) \}} - \frac{1}{N} \cdot \lvert \Gamma(x) \rvert \Big ) \, 
\end{equation}
for  $\, x \, =\, (x^{1}, \ldots , x^{N}) \in \mathbb R_{+}^{N}\,$, $\,i \, =\,  1, \ldots , N\,$ with $\,\overline{x} \, :=\, (x^{1} + \cdots  + x_{N}) \, / \, N \ge 0\,$. Note that $\, \Phi ([0, \infty)^{N} \setminus \{{\bm 0}\}) \, \subseteq\, [0, \infty)^{N} \setminus \{{\bm 0}\} \,$ and $\, \Phi ({\bm 0}) \, =\, {\bm 0} \, =\, (0, \ldots , 0) \,$.\\ 

Given the initial configuration $\,  X_{0} \, :=\, (X_{0}^{1}, \ldots , X_{0}^{N}) \in (0, \infty)^{N}\,$ and a standard $N$-dimensional Brownian motion $W$, we take the unique strong solution $\, Y_{\cdot}^{1} \, :=\, (Y_{\cdot}^{1, 1}, \ldots , Y_{\cdot}^{1, N}) \,$ to 
\begin{equation} \label{eq: CYa}
Y_{t}^{1, i}  \, =\,  X_{0}^{i} + \int^{t}_{0} b(Y_{s}^{1, i} , \overline{Y}_{s}^{1}) \, {\mathrm d} s + W_{t}^{i} \, ; \quad i \, =\, 1, \ldots , N \,, \, t \ge 0  \, , 
\end{equation}
thanks to the Lipschitz continuity of $\, b(\cdot, \cdot) \,$ as in (\ref{eq: BL}). Here $\, \overline{Y}_{\cdot}^{1} \, :=\, (Y_{\cdot}^{1} + \cdots + Y_{\cdot}^{N}) \, / \, N\,$. Let us define for $\, 0 \le t < \tau^{1}\,$ 
\begin{equation} \label{eq: CXa}
X_{t}^{i} \, :=\, Y_{t}^{1, i} \, , \quad M_{t}^{i} \, :=\, 0 \, , 
\end{equation}
where $\, \tau^{1} \, :=\, \min_{1\le i \le N} \inf \{ s \ge 0 : Y^{1, i}_{s} \, =\,  0 \} \,$, and at $\, \tau^{1}\,$ let us define 
\[
X^{i}_{\tau^{1}} \, :=\,  \Phi^{i} ( Y^{1}_{\tau^{1}}) \, , \quad M_{\tau^{1}}^{i} \, :=\,  {\bf 1}_{\{ \, i \in \Gamma ( Y^{1}_{\tau^{1}}) \, \}} \, , 
\]
where the map $\, \Phi (\cdot) \,$ and $\, \Gamma(\cdot) \,$ are defined in (\ref{eq: MAP}). Then, whenever $\, X_{\tau^{k}} \in [0, \infty)^{N} \setminus \{{\bm 0}\}\,$ for $\,k \, =\, 1, 2, \ldots \,$, we construct  recursively  the unique strong solution $\, Y_{\cdot}^{k+1} \, :=\, ( Y_{\cdot}^{k+1, 1}, \ldots , Y_{\cdot}^{k+1, N}) \,$ to the system of stochastic differential equations 
\begin{equation} \label{eq: CYb}
Y_{t}^{k+1, i}  \, =\,  X_{\tau^{k}}^{i} + \int^{t}_{0} b(Y_{s}^{k+1, i} , \overline{Y}_{s}^{k+1})  \,{\mathrm d} s + W_{t}^{i} \, ; \quad i \, =\, 1, \ldots , N \,, \, t \ge 0  \, , 
\end{equation}
where $\, \overline{Y}^{k+1}_{\cdot}\,$ is the average of elements of $\, Y_{\cdot}^{k+1}\,$, and define 
\[
\tau^{k+1} \, :=\,  \min_{1\le i \le N} \inf \{ s \ge \tau^{k} \, : \, Y_{s}^{k+1, i} \, =\,  0 \} \, , \quad 
X_{t}^{i} \, :=\, Y^{k+1, i}_{t} \, , \quad M_{t}^{i} \, :=\,  M_{\tau^{k}}^{i} \, \quad \text{ for } \tau^{k} \le t < \tau^{k+1} \, , 
\]
\begin{equation} \label{eq: CXb}
X_{\tau^{k+1}}^{i} \, :=\, \Phi^{i}(Y^{k+1, i}_{\tau^{k+1}})  \, , \quad M_{\tau^{k+1}}^{i} \, :=\,  M_{\tau^{k}}^{i} + {\bf 1}_{\{ \,  i \in \Gamma ( Y^{k+1}_{\tau^{k+1}}) \,  \}} \, .   
\end{equation}
If $\, X_{\tau^{k_{0}}} \, =\, {\bm 0}\,$ for some $\, k_{0} < +\infty\,$, we set $\, \tau^{\ell} \, =\, \overline{\tau}_{0} \,$,  for every $\, \ell \ge k_{0}\,$
\begin{equation} \label{eq: BTAU0}
\overline{\tau}_{0} \, :=\,  \inf \{ s > 0 \, : \, \max_{1\le i \le N} X_{s}^{i} \, =\, 0 \} \, =\, \inf\{ s > 0 : \overline{X}_{s} \, =\,  0 \} \, ,
\end{equation}
and stop the process, i.e., $\, X_{t} \, \equiv\, {\bm 0} \,$ for $\,t \ge \overline{\tau}_{0}\,$. This way we construct $\, (X_{\cdot}, M_{\cdot}) \,$ until time $\, \tau^{k} (\le  \overline{\tau}_{0})\,$ for every $\, k \ge 1\,$. 

\begin{proposition} \label{lm: 2.1} Given a standard Brownian motion $\,W_{\cdot}\,$ and the initial configuration $\,X_{0} \in (0, \infty)^{N}\,$ the process $\, ( X_{\cdot}, M_{\cdot})\,$ constructed by this recipe (\ref{eq: CYa})-(\ref{eq: CXb}) is the unique, strong solution to (\ref{eq: PS}) with (\ref{eq: BL}), (\ref{eq: DC}) on $\, [0, \overline{\tau}_{0} ]\,$, such that if there is a default, i.e., $\,\lvert \Gamma ( X_{t-}) \rvert \ge 1\,$ at time $\,t \,$, then the post-default behavior is determined by $\, X_{t}^{i} \, =\,  \Phi^{i}(X_{t-}) \, $ for $\,i \, =\, 1, \ldots , N\,$. 
\end{proposition}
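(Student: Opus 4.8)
The plan is to argue by induction on the number of defaults: on each diffusive phase the dynamics is a classical SDE with Lipschitz drift, at each default the jump is prescribed by the explicit map $\Phi$, and the remaining work is to reconcile this piecewise recipe with the self-referential default mechanism built into \eqref{eq: PS}.

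First I would check that the recipe is well-defined. On each random interval $[\tau^k,\tau^{k+1})$ the process follows the SDE \eqref{eq: CYb}, whose drift is globally Lipschitz by \eqref{eq: BL}; hence there is a unique strong solution $Y^{k+1}$, adapted to the Brownian filtration, and $\tau^{k+1}$ is a stopping time, being the first hitting time of $0$ by a continuous adapted process. Since $\Phi$ maps $[0,\infty)^N\setminus\{{\bm 0}\}$ into itself (as recorded just after \eqref{eq: MAP}), the post-default configuration $X_{\tau^{k+1}}=\Phi(Y^{k+1}_{\tau^{k+1}})$ again lies in $[0,\infty)^N$, so the induction continues, while a landing on ${\bm 0}$ freezes the process consistently with \eqref{eq: BTAU0}. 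I would also record that after a default every coordinate is strictly positive: for $i\notin\Gamma(x)$ this is exactly the defining property of the maximal cascade set, whereas for $i\in\Gamma(x)$ one computes $\Phi^i(x)=x^i+\overline x\,\big(1+\tfrac{1-|\Gamma(x)|}{N}\big)\ge\tfrac{2}{N}\,\overline x>0$, using $x^i\ge 0$ and the bound $|\Gamma(x)|\le N-1$ valid on $[0,\infty)^N\setminus\{{\bm 0}\}$ (if all $N$ banks defaulted, the largest coordinate $x^j$ would satisfy $x^j\le\tfrac{\overline x}{N}(N-1)\le x^j(1-\tfrac1N)$, forcing $x={\bm 0}$). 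Thus $X_{\tau^k}\in(0,\infty)^N$ for every $k$ before absorption, and $(X_\cdot,M_\cdot)$ is right-continuous with left limits and adapted to the filtration generated by $W$ — hence a strong-solution candidate.

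Next I would verify that the recipe solves \eqref{eq: PS}. On $[\tau^k,\tau^{k+1})$ equation \eqref{eq: CYb} is \eqref{eq: PS} with $M$ frozen, so the only point to check is the behaviour at a default time $t=\tau^{k+1}$. The crucial step is to show that the recursively defined cascade set $\Gamma(Y^{k+1}_t)$ coincides with the set of banks defaulting simultaneously at $t$ under the implicit rule in \eqref{eq: PS}: writing $\gamma$ for the instantaneous default set, that rule reads $i\in\gamma$ iff $X^i_{t-}-\tfrac{\overline X_{t-}}{N}\big(|\gamma|-{\bf 1}_{\{i\in\gamma\}}\big)\le 0$, and I would prove that the nested union $\bigcup_{\ell\le k}\Gamma_\ell$ increases, in at most $N-1$ steps, to the \emph{smallest} such $\gamma$ — the ``physical'' selection in the spirit of \cite{MR3322871}. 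Granting this, the increment of $M^i$ at $t$ is ${\bf 1}_{\{i\in\Gamma\}}$ and the jump of $X^i$ dictated by the last term of \eqref{eq: PS} equals $\overline X_{t-}\big({\bf 1}_{\{i\in\Gamma\}}-\tfrac1N(|\Gamma|-{\bf 1}_{\{i\in\Gamma\}})\big)=\Phi^i(X_{t-})-X^i_{t-}$, i.e.\ precisely the jump performed in \eqref{eq: CXb}. To see that the construction reaches $[0,\overline\tau_0]$ I would note that, by \eqref{eq: DC} and \eqref{eq: XBAR}, $\overline X$ is a driftless continuous martingale between defaults and is multiplied by $1+|\Gamma|/N^2\ge 1+1/N^2$ at each default, which together with the lower bound $X^i_{\tau^k}\ge\tfrac2N\overline X_{\tau^k-}$ on the defaulting banks rules out an accumulation of the $\tau^k$ strictly before $\overline\tau_0$; hence $\sup_k\tau^k=\overline\tau_0$, unless the process is absorbed at ${\bm 0}$ earlier. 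This last point imports the finite-$N$ non-accumulation estimates of the physical-solution framework of \cite{MR3322871}, to which our model is linked through the transformation \eqref{eq: XHAT}.

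Finally, for uniqueness I would take any strong solution $(\widetilde X_\cdot,\widetilde M_\cdot)$ of \eqref{eq: PS}–\eqref{eq: DC} obeying the physical post-default rule $\widetilde X_t=\Phi(\widetilde X_{t-})$: on $[0,\tau^1)$ it has no default and solves \eqref{eq: CYa}, so pathwise uniqueness forces $\widetilde X=Y^1$ there, whence its first default occurs at $\tau^1$ with $\widetilde X_{\tau^1}=\Phi(Y^1_{\tau^1})=X_{\tau^1}$; iterating over the successive intervals and invoking the identification of the default set and jump from the previous paragraph shows $(\widetilde X,\widetilde M)=(X,M)$ up to every $\tau^k$, hence on $[0,\overline\tau_0]$. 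The step I expect to be the main obstacle is the reconciliation in the third paragraph: proving that the explicit ``one-shot'' map $\Phi$ reproduces the implicit, self-referential default mechanism of \eqref{eq: PS} — equivalently, that the recursive cascade $\Gamma$ is the minimal consistent default set — together with the accompanying fact that simultaneous defaults do not accumulate for finite $N$; both are obtained by transporting the physical-solution analysis of \cite{MR3322871} to the present setting.
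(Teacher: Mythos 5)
Your proposal is correct and follows essentially the same route as the paper's proof: recursive construction over default epochs, identification of the jump sizes produced by $\Phi$ with those prescribed by \eqref{eq: JS}, pathwise uniqueness for the Lipschitz SDE between defaults, and a transfer of the physical-solution analysis of Delarue et al.\ via the analogy to \eqref{eq: DIRT}. In fact, your version is considerably more explicit than the paper's: you isolate and verify the two points the paper leaves implicit — that the cascade set $\Gamma(x)$ is the minimal default set consistent with the self-referential rule in \eqref{eq: PS}, and that the default epochs do not accumulate before $\overline{\tau}_0$ (via the strict growth $\overline{X}_{\tau^k}=(1+|\Gamma|/N^2)\,\overline{X}_{\tau^k-}$ and the lower bound $\Phi^i(x)\ge\frac{2}{N}\overline{x}$ for defaulters, which in turn rests on your correct observation that $|\Gamma(x)|\le N-1$ whenever $x\neq\bm 0$).
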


\begin{proof} Because of the similarity of (\ref{eq: PS}) to the particle system (\ref{eq: DIRT}) discussed in \cite{MR3349003},  
we may adopt the main idea of the proof of their Lemma 3.3. Indeed,  at every stopping time $\, \tau^{k} \,$,  we observe that $\,  \lvert \Gamma (X_{\tau^{k}-}) \rvert (\ge 1)\,$ of default events occur, that is, the sample path of the process $\, (X_{\cdot}, M_{\cdot})\,$ has positive jumps. Because of (\ref{eq: MAP}) and the jump sizes of $\, M_{\cdot}^{j}\,$ with 
\[
\sum_{j\neq i} ( M^{j}_{\tau^{k}} - M^{j}_{\tau^{k}-}) \, =\,   \sum_{j\neq i} {\bf 1}_{\{ j \in \Gamma ( Y^{k}_{\tau^{k}}) \} } \, =\,  \lvert \Gamma ( Y^{k}_{\tau^{k}} ) \rvert - {\bf 1}_{ \{\, i \, \in \, \Gamma ( Y^{k}_{\tau^{k}}) \,  \}} \,  , 
\]
the jump sizes of $\, X_{\cdot}\,$ in (\ref{eq: CXb}) given by 
\[
X_{\tau^{k}}^{i} - X^{i}_{\tau^{k}-} \, =\,  \Phi^{i}( Y^{k}_{\tau^{k}}) - Y^{k}_{\tau^{k}} = \overline{Y}_{\tau^{k}} \cdot \Big( \Big( 1  + \frac{\,1\,}{\,N\,} \Big) {\bf 1}_{\{\, i \, \in \,  \Gamma ( Y^{k}_{\tau^{k}})\, \}} - \frac{\,1\,}{\,N\,} \lvert \Gamma ( Y^{k}_{\tau^{k}}) \rvert \Big) 
\]
\[
\, =\,  \overline{X}_{\tau^{k}-} \cdot \Big( M_{\tau^{k}}^{i} - M^{i}_{\tau^{k}-} - \frac{\,1\,}{\,N\,} \sum_{j \neq i} \big( M^{j} _{\tau^{k}} - M^{j}_{\tau^{k}}\big)  \Big) \,  ; \quad k \, =\, 1 , 2 , \ldots ,
\]
are equal to those in (\ref{eq: JS}) induced by (\ref{eq: PS}). In between the stopping times, the solution to (\ref{eq: CYb}) is uniquely determined by the same dynamics as  (\ref{eq: PS}) with (\ref{eq: BL}) on a probability space $\, (\Omega, \mathcal F, \mathbb P) \,$. 
\end{proof}

Note that if $\,N \, =\, 1\,$, then $\, \overline{X}_{\cdot} \, =\, X_{\cdot}\,$ with $\,\sum_{i=1}^{1} b(x_{i}, x_{i}) \equiv 0 \,$ and hence $\, \overline{\tau}_{0} < + \infty\,$ a.s. in Lemma \ref{lm: 2.1}. In general, the probability that the first passage time $\, \overline{\tau}_{0}\,$ of zero for the average process is finite depends on the specification of drift function $\,b(\cdot, \cdot)\,$ in (\ref{eq: PS}). For the system with (\ref{eq: PSa}) or (\ref{eq: PSb}), instead of (\ref{eq: PS}), we may construct the corresponding physical solutions as in Proposition \ref{lm: 2.1}. As we have seen in (\ref{eq: MD}), the first passage time of zero for the average process is finite almost surely in the system (\ref{eq: PSa}). 

\subsection{Mean-Field Approximation}
\subsubsection{Informal Derivation of Mean-Field Limits}

Let us discuss a mean-field approximation of \textsc{McKean-Vlasov} type for the system (\ref{eq: PS}) with (\ref{eq: BL})-(\ref{eq: DC}). In this section let us assume $\,b(x, m) \, =\,  - a(x - m) \,$, $\,x, m \in [0, \infty) \,$ for some $\, a > 0\,$, and assume further that the empirical distribution 
\[
F^{N}_{t}(\cdot) \, :=\, \frac{1}{\, N\, } \sum_{i=1}^{N} \delta_{X^{i}_{t}} \, ; \quad t \ge 0 
\]
converges weakly to a law of process $\, \{ \mathcal X_{t} , t \ge 0 \} \,$ described by 
\begin{equation} \label{eq: MF}
\mathcal X_{t} \, =\, \mathcal X_{0} - a \int^{t}_{0} (\mathcal X_{s} -   \mathbb E[ \mathcal X_{t}] ) {\mathrm d} s + W_{t} + \int^{t}_{0} \mathbb E[ \mathcal X_{s-}]   {\mathrm d} ( \mathcal M_{s} - \mathbb E[ \mathcal M_{s} ] ) \, ; \quad t \ge 0 \, ,  
\end{equation}
where $\,W_{\cdot}\,$ is the standard Brownian motion, $\, \mathcal M_{t} \, :=\, \sum_{k=1}^{\infty} {\bf 1}_{\{\tau^{k} \le t \}} \,$, $\, \tau^{k} \, :=\, \inf \{ s > \tau^{k-1} : \mathcal X_{t-}  \le 0 \} \, $, $\, k \ge 1 \, $, $\, \tau^{0} \, =\,  0 \, $. 
Then taking expectations of both sides of (\ref{eq: MF}), we obtain 
\[
\mathbb E[ \mathcal X_{t}] \, =\, \mathbb E[ \mathcal X_{0} ] \, =: x_{0}\, \, ; \quad t \ge 0 \, .  
\]
When $\, \mathbb E [ \mathcal X_{0}] \, =:\, x_{0} \,$ for some $\,x_{0} > 0\,$, substituting this back into (\ref{eq: MF}), we obtain 
\begin{equation} \label{eq: MFsta}
\mathcal X_{t} \, =\, \mathcal X_{0} - a \int^{t}_{0} (\mathcal X_{s} - x_{0}) {\mathrm d} s + W_{t} + x_{0} ({\mathcal M}_{t} - \mathbb E[ {\mathcal M}_{t}] ) \, ; \quad t \ge 0 \, . 
\end{equation} 
Transforming the state space from $\,[0, \infty)\,$ to $\, (-\infty, 1]\,$ by $\, 
\widehat{\mathcal X}_{t} \, :=\, (x_{0} - \mathcal X_{t}) \, / \, x_{0} \, $, we see 
\begin{equation} \label{eq: MFa}
\widehat{\mathcal X}_{t} \, =\,  - \int^{t}_{0} a \widehat{\mathcal X}_{s} {\mathrm d} s + \widehat{W}_{t} - \widehat{\mathcal M}_{t} + \mathbb E[ \widehat{\mathcal M}_{t}] \, ; \quad t \ge 0 \, , 
\end{equation}
where $\, \widehat{W}_{\cdot} \, =\,  W_{\cdot} \, / \, x_{0}\,$, $\, \widehat{\mathcal M}_{\cdot} \, =\, \mathcal M_{\cdot}	\,$. This nonlinear \textsc{McKean-Vlasov}-type equation can be seen as the mean field limit of the transformed process in (\ref{eq: XHAT}). 

This transformed process $\, \widehat{\mathcal X}_{\cdot}\,$ is  similar to the nonlinear \textsc{McKean-Vlasov}-type stochastic differential equation 
\begin{equation} \label{eq: MFb}
\widetilde{\mathcal X}_{t}\, =\,  \widetilde{\mathcal X}_{0} + \int^{t}_{0} {\mathrm b}( \widetilde{\mathcal X}_{s}) {\mathrm d} s  + \widetilde{W}_{t}  - \widetilde{\mathcal M}_{t} + \alpha \mathbb E[ \widetilde{\mathcal M}_{t}] \, ; \quad t \ge 0 \, , 
\end{equation}
studied in \cite{MR3349003,MR3322871}.  
Here $\,\mathcal X_{0} < 1\,$, $\,\alpha \in (0, 1)\,$, $\, \mathrm b : (-\infty, 1] \to \mathbb R\,$ is assumed to be Lipschitz continuous with at most linear growth. $\, \widetilde{W}_{\cdot}\,$ is the standard Brownian motion, $\, \widetilde{\mathcal M}_{\cdot} \, =\, \sum_{k=1}^{\infty} {\bf 1}_{\{ \widetilde{\tau}^{k} \le \cdot\}}\,$ with $\, \widetilde{\tau}^{k} \, :=\, \inf \{ s > \tau^{k-1} : \widetilde{\mathcal X}_{s-} \ge 1 \} \,$, $\,k \ge 1\,$, $\, \widetilde{\tau}^{0} \, =\, 0\,$. When we specify $\, \widetilde{\mathcal X}_{0} \, =\, 0\,$, $\, {\mathrm b} (x) \, =\, - a x \,$, $\, x \in \mathbb R_{+}\,$, and $\, \alpha \, =\, 1\,$, the solution $\, (\widehat{\mathcal X}_{\cdot}, \widehat{\mathcal M}_{\cdot})\,$ to (\ref{eq: MFb}) reduces to the solution $\,(\widetilde{\mathcal X}_{\cdot}, \widetilde{\mathcal M}_{\cdot})\,$ to (\ref{eq: MFa}), however, the previous study of (\ref{eq: MFb}) does not guarantee the uniqueness of solution to (\ref{eq: MFb}) in the case $\, \alpha \, =\, 1\,$.

\subsubsection{Uniqueness of the Mean Field Limit} 
Following \cite{MR3322871}, 
we may reformulate the solution $\,( \widehat{\mathcal X}_{\cdot}, \widehat{\mathcal M}_{\cdot})\,$ to (\ref{eq: MFa}) by 
\begin{equation} \label{eq: ZHAT}
\widehat{Z}_{t} \, :=\, \widehat{\mathcal X}_{t} + \widehat{\mathcal M}_{t} \, =\, - a  \int^{t}_{0} ( \widehat{Z}_{s} - \widehat{\mathcal M}_{s}) {\mathrm d} s + \widehat{W}_{t} + \mathbb E[ \widehat{\mathcal M}_{t}]  \, , 
\end{equation}
\begin{equation}
\widehat{\mathcal M}_{t} \, =\,  \lfloor \sup_{0 \le s \le t } ( \widehat{Z}_{s})^{+} \rfloor \, ; \quad t \ge 0 \, .  
\end{equation}
Here $\, \lfloor \cdot \rfloor \,$ is the integer part. Given a candidate solution $\,e_{t} \,$ for $\, \mathbb E [ \widehat{\mathcal M}_{t}]\,$, $\,t \ge 0\,$, we shall consider 
\begin{equation} \label{eq: ZHATa}
\widehat{Z}^{e}_{t} \, =\,  -a \int^{t}_{0} ( \widehat{Z}_{s}^{e} - \widehat{\mathcal M}_{s}^{e}) {\mathrm d} s + \widehat{W}_{t} + e_{t} \,  , \quad 
\widehat{\mathcal M}^{e}_{t} \, =\, \lfloor \sup_{0 \le s \le t } ( \widehat{Z}_{s}^{e})^{+} \rfloor \, ; \quad t \ge 0 \, , 
\end{equation}
where the superscripts $\,e\,$ of $\, \widehat{Z}^{e}_{\cdot}\,$ and $\, \widehat{\mathcal M}^{e}_{\cdot}\,$ represent the dependence on $\, e_{\cdot}\,$. Then uniqueness of the solution to (\ref{eq: MFa}) is reduced to uniqueness of the fixed point $\, e^{\ast}_{\cdot }\, =\,  \mathfrak M_{\cdot} (e^{\ast}) \,$ of the map $\,\mathfrak M : C(\mathbb R_{+}, \mathbb R_{+}) \to C (\mathbb R_{+}, \mathbb R_{+}) \,$ defined by 
\begin{equation} \label{eq: ME}
\mathfrak M_{t}(e) \, :=\,  \mathbb E \big [ \lfloor \sup_{0 \le s \le t} ( \widehat{Z}^{e}_{s})^{+} \rfloor \big] \, =\,  \mathbb E [ \widehat{\mathcal M}^{e}_{t}] \, ; \quad t \ge 0 \, . 
\end{equation}
This can be verified by the observation 
\[
\widehat{\mathcal X}_{t}\, =\, \widehat{Z}^{e^{\ast}}_{t} - \widehat{\mathcal M}^{e^{\ast}}_{t} \, =\,  - a\int^{t}_{0} ( \widehat{Z}_{s}^{e^{\ast}} - \widehat{\mathcal M}^{e^{\ast}}_{s}) {\mathrm d} s + \widehat{W}_{t} + e^{\ast}_{t} \, =\,  - a\int^{t}_{0} \widehat{\mathcal X}_{s} {\mathrm d} s + \widehat{W}_{t} + \mathbb E [ \widehat{\mathcal M}_{t}^{e^{\ast}} ] \, 
\]
for every $\, t \ge 0 \,$.  

\subsubsection{Numerical Approximation of Fixed Point}
The map $\, e \to \mathfrak M(e) \,$ in (\ref{eq: ME}) is monotone, in the sense that if $\, e^{1}_{\cdot}, e^{2}_{\cdot} \in C(\mathbb R_{+}, \mathbb R_{+})\, $ with $\,e^{1}_{t} \le e^{2}_{t}\,$ for every $\, t \ge 0\,$, then 
\begin{equation}\, \mathfrak M_{t}(e^{1}) \le \mathfrak M_{t} (e^{2}) \,; \quad \,t \ge 0 \,. 
\end{equation}
With this idea, let us consider the following numerical approximation. Start with $\,e_{\cdot}^{(0)} \equiv 0 \, \,$ and define recursively 
\begin{equation} \label{eq: RE}
e^{(n+1)}_{t} \, =\, \mathfrak M _{t}(e^{(n)}) \, ; \quad  n \ge 0 \, , \quad t \ge 0 \, . 
\end{equation} 
By the definition of the map, $\,t \to e^{(n+1)}_{t}\,$ is strictly increasing for $\,n \ge 0\,$. 
Then by the monotonicity of the map $\, \mathfrak M\,$ and $\, e^{(1)}_{t} \ge e^{(0)}_{t} \equiv 0\,$, $\,t \ge 0\,$, we have 
\begin{equation}
\, e^{(0)}_{\cdot} \le e^{(1)}_{\cdot} \le \ldots \,,  
\end{equation}
and hence, we conjecture that if the limit 
\begin{equation} \label{eq: PC}
e^{\ast}_{t} \, :=\, \lim_{n\to \infty} e^{(n)}_{t} \,  
\end{equation}
exists and is finite for every $\,t \ge 0 \,$, then $\,e^{\ast}_{\cdot}\,$ serves as the fixed point of the map $\, \mathfrak M \,$, that is, 
\[
\mathfrak M_{t}( e^{\ast}) \, =\, e^{\ast}_{t} \, ; \quad t \ge 0 \, . 
\]

To discuss the convergence (\ref{eq: PC}), we shall consider the sup norm $\, \lVert e \rVert_{T} \, :=\, \sup_{0\le s \le T} \lvert e(s)\rvert \,$ for every $\,T > 0\,$ and evaluate 
\begin{equation} \label{eq: EE}
\begin{split}
\lVert e^{(n+2)} - e^{(n+1)} \rVert _{T}\, & =\,  \lVert \mathfrak M_{\cdot}(e^{(n+1)}) - \mathfrak M_{t}(e^{(n)}) \rVert _{T }\, \\
&=\, \sup_{0 \le t \le T} \Big \lvert \mathbb E \big [ \lfloor \sup_{0 \le s \le t} ( \widehat{Z}^{e^{(n+1)}}_{s})^{+} \rfloor - \lfloor \sup_{0 \le s \le t} ( \widehat{Z}^{e^{(n)}}_{s})^{+} \rfloor \big] \Big \rvert \, 
\end{split}
\end{equation}
in terms of $\,\lVert e^{(n+1)} - e^{(n)} \rVert _{T}\,$. 
We define $\, {\bm \{} x {\bm \}}  \, :=\, x - \lfloor x\rfloor  \,$, the non-integer part of $\, x \in [0, \infty) \,$. 

\subsubsection{Case $\, a \, =\,  0 \,$ of No Drifts} 
In the special case when $\, a \equiv 0\,$, we have $\, \widehat{Z}^{e}_{\cdot}\, =\,  \widehat{W}_{\cdot} + e_{\cdot}\,$ in (\ref{eq: ZHATa}).  In this case we may evaluate (\ref{eq: EE}). First observe the identity 
\[
\, \lfloor x  \rfloor  - \lfloor y \rfloor \, =\, \lfloor  x - y  \rfloor + {\bf 1}_{\{ {\bm \{} x {\bm \}} < {\bm \{} y {\bm \}} \}}\, ; \quad 0 \le y \le x <  \infty \,  
\]
on the integer part $\, \lfloor \cdot \rfloor \,$ and non-integer part $\, {\bm \{} \cdot {\bm \}} \,$. Applying this identity inside the expectation in (\ref{eq: EE}), we obtain for every $\,n \ge 0 \,$ 
\begin{equation} \label{eq: INEQ}
\begin{split}
& \lVert e^{(n+2)} - e^{(n+1)} \rVert _{T} -  \lfloor \lVert e^{(n+1)} - e^{(n)}\rVert_{T} \rfloor   \\
& =  \sup_{0 \le t \le T} \Big \lvert \mathbb E \big [ \lfloor \sup_{0 \le s \le t} ( \widehat{W}_{s} + e^{(n+1)}_{s})^{+} \rfloor -  \lfloor \sup_{0 \le s \le t} ( \widehat{W}_{s} + e^{(n)}_{s})^{+} \rfloor \big] \Big \rvert -  \lfloor \lVert e^{(n+1)} - e^{(n)}\rVert_{T} \rfloor 
\\
\, & =\,  \sup_{0 \le t \le T} \Big \lvert   \mathbb E [ \lfloor \sup_{ 0 \le s \le t} ( \widehat{W}_{s} + e_{s}^{(n+1)} )^{+} - \sup_{0 \le s \le t} ( \widehat{W}_{s} + e_{s}^{(n)} )^{+}\rfloor ]  -  \lfloor \lVert e^{(n+1)} - e^{(n)}\rVert_{T} \rfloor 
\\
& ~~~~~~~~~~~~~~~~~~~~~~~~~~~~ + \mathbb P ( {\bm \{} \sup_{0 \le s \le t} ( \widehat{W}_{s} + e^{(n+1)}_{s})^{+} {\bm \}}  < {\bm \{} \sup_{0 \le s \le t} ( \widehat{W}_{s} + e^{(n)}_{s})^{+} {\bm  \}} ) \Big\rvert  
\\
& \le  \sup_{0 \le t \le T} \mathbb P \big ( {\bm \{} \sup_{0 \le s \le t} ( \widehat{W}_{s} + e^{(n+1)}_{s})^{+} {\bm \}}  < {\bm \{} \sup_{0 \le s \le t} ( \widehat{W}_{s} + e^{(n)}_{s})^{+} {\bm  \}} \big) \, . 
\end{split}
\end{equation} 
In the last inequality of (\ref{eq: INEQ}) we used 
\begin{equation} \label{eq: INEQa}
\lvert  \sup_{ 0 \le s \le t} ( \widehat{W}_{s} + e_{s}^{(n+1)} )^{+} - \sup_{0 \le s \le t} ( \widehat{W}_{s} + e_{s}^{(n)} )^{+}\rvert  \le \lVert e^{(n+1)} - e^{(n)}\rVert_{T} \, ; \quad 0 \le t \le T \, . 
\end{equation}
By (\ref{eq: INEQ}) we have an easy upper bound
\[
\lVert e^{(n+2)} - e^{(n+1)} \rVert _{T} \le   \lfloor \lVert e^{(n+1)} - e^{(n)}\rVert_{T} \rfloor + 1 \, ; \quad n \ge 0 \, ,  
\]
and hence 
\[
e^{(n)}_{T} \, =\, \lVert e^{(n)}\rVert_{T} \, \le \, \sum_{k=1}^{n} \lVert e^{(k)} - e^{(k-1)} \rVert_{T} \le n ( \lfloor \lVert e^{(1)} \rVert_{T} \rfloor + 1) + {\bm \{} \lVert e^{(1)}\rVert_{T } {\bm \}}  < + \infty \, ; \quad n \ge 0 \, .  
\]
Here $\, e^{(1)}_{\cdot} \, =\,  \mathfrak M_{\cdot}(e^{(0)}) \, =\, \mathfrak M_{\cdot}(0)\,$ is evaluated as 
\[
e^{(1)}_{t} \, =\, \mathbb E [ \lfloor \sup_{0 \le s \le t} ( \widehat{W}_{s} )^{+} \rfloor ] \, =\,  \sum_{k=1}^{\infty} \mathbb P ( \sup_{0 \le s \le t} W_{s} \ge k x_{0}) \, =\,  \sum_{k=1}^{\infty} \mathbb P ( \lvert W_{1} \rvert \ge k x_{0} /  \sqrt{t\, } ) 
\]
\[
\, =\,  \sum_{k=1}^{\infty} \int_{k x_{0} / \sqrt{t\,}} ^{\infty} \frac{\, 2 \, e^{-u^{2}/2}}{\sqrt{2 \pi}} {\mathrm d} u 
\le \sum_{k=1}^{\infty} \frac{2 \, e^{-k^{2}x_{0}^{2} / (2t)} \sqrt{t\,}}{\, x_{0} k \sqrt{2 \pi\,}\, } 
\]
\[
\le  \sum_{k=1}^{\infty} \frac{2 \, e^{-k^{2}x_{0}^{2} / (2t)} \sqrt{t\,}}{\, x_{0} \sqrt{2 \pi\,}\, } \, =\, \sqrt{\frac{t}{ 2 \pi x_{0}^{2}}} \big( \Theta_{3}(0, e^{-x_{0}^{2}/(2 t)}) - 1) \, \le \, \frac{t}{\, x_{0}\, } \, ; \quad t \ge 0 \, , 
\]
where $\, \Theta_{3}(\cdot, \cdot) \,$ is the \textsc{Jacobi} elliptic theta function. The first inequality follows from the tail estimate of the Gaussian probability. Thus the curve $\, t\to e^{(1)}_{t}\,$ is bounded by the line with the slope $\, 1\, /\, x_{0}\,$ and zero intercept. 
The first and second derivatives $\, \dot{e}^{(1)}_{t}\,$, $\, \ddot{e}^{(1)}_{t}\,$ of $\, t \to e^{(1)}_{t}\,$ are given by 
\[
\dot{e}^{(1)}_{t} \, :=\,  \frac{ \, \,  {\mathrm d} e^{(1)}_{t}}{ {\mathrm d}\, t } \, =\, \sum_{k=1}^{\infty} \frac{k x_{0} e^{-k^{2} x_{0}^{2}/ (2 t)}}{\sqrt{ 2\pi t^{3}\, }} \,  \ge 0 \, , 
\]
\[
\ddot{e}^{(1)}_{t} \, :=\,  \frac{ \, \,  {\mathrm d}^{2} e^{(1)}_{t}}{ {\mathrm d}\, t^{2} } \, =\, \sum_{k=1}^{\infty} \frac{k x_{0} e^{-k^{2} x_{0}^{2}/ (2 t)}}{\sqrt{ 2\pi t^{3}\, }} \Big( - \frac{\, 3\, }{2 t}  + \frac{\, k^{2} x_{0}^{2}\, }{ 2t^{2}} \Big)\, , 
\]
for $\, t \ge 0 \,$ with $\, \dot{e}^{(1)}_{0+} \, =\,  0 \, =\, \ddot{e}^{(1)}_{0+} \,$ and $\, \lim_{t\to \infty} \dot{e}^{(1)}_{t} \, =\,  0 \, =\,  \lim_{t\to \infty} \ddot{e}^{(1)}_{t}\, $. Then it is natural to consider the family of 
functions 
\begin{equation} \label{eq: LF}
\mathcal L \, :=\, \Big \{ e \in C([0, \infty), [0, \infty) ) \, :\,  \, 
\,  e_{0} \, =\, 0\, , \, e_{t} \, \le\, \ell (t) \, :=\,  \frac{t}{\, x_{0}\, } \, ; \quad t \ge 0 \,  \Big\} \, . 
\end{equation}

\begin{proposition}  \label{lm: CLL}
Assume $\, x_{0} \ge 1 \,$ and $\, a \, =\,  0\,$. For every $\, e \in \mathcal L\,$ in (\ref{eq: LF}) we have $\, \mathfrak M(e) \in \mathcal L\,$. In particular, $\, e^{(n)}_{\cdot}\, $ defined in (\ref{eq: RE}) belongs to $\, \mathcal L\,$ for every $\,n \ge 0\,$. 
\end{proposition}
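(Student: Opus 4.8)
The plan is to separate the ``soft'' requirements of membership in $\mathcal L$ from the one quantitative estimate that carries all the weight, reduce that estimate to the top element $\ell$ of $\mathcal L$ by the monotonicity of $\mathfrak M$ recorded above, and then prove it by recognising $\mathfrak M(\ell)$ as a renewal function. Fix $e\in\mathcal L$. Since $a=0$ we have $\widehat Z^e_s=\widehat W_s+e_s$ with $\widehat W_\cdot=W_\cdot/x_0$, so $\mathfrak M_0(e)=\mathbb E[\lfloor(\widehat W_0+e_0)^+\rfloor]=0$; and writing $\mathfrak M_t(e)=\sum_{k\ge1}\mathbb P(\sup_{0\le s\le t}(\widehat W_s+e_s)\ge k)$, each summand is continuous in $t$ (for $t>0$ the running supremum of the continuous process $s\mapsto\widehat W_s+e_s$ started at $0$ satisfies $\sup_{[0,t]}=\sup_{[0,t)}$, giving left--continuity, while right--continuity is continuity of measure along $t'\downarrow t$), and the series converges locally uniformly because its partial sums are dominated on $[0,T]$ by $\mathfrak M_T(e)\le\mathbb E[\sup_{[0,T]}(\widehat W_s+e_s)^+]<\infty$. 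Hence $\mathfrak M(e)\in C([0,\infty),[0,\infty))$, so $\mathfrak M(e)\in\mathcal L$ amounts to the bound $\mathfrak M_t(e)\le t/x_0$; and since $e_s\le s/x_0=\ell(s)$ pointwise and $\mathfrak M$ is monotone, $\mathfrak M_t(e)\le\mathfrak M_t(\ell)$. As $\ell\in\mathcal L$, it therefore suffices to prove
\[
\mathfrak M_t(\ell)\le t/x_0\qquad\text{for all }t\ge0 ,
\]
and then $e^{(n)}\in\mathcal L$ follows by induction on $n$, starting from $e^{(0)}\equiv0\in\mathcal L$.

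\emph{Renewal reformulation.} Because $\widehat W_\cdot=W_\cdot/x_0$, we have $\mathfrak M_t(\ell)=\mathbb E\big[\lfloor\tfrac1{x_0}\sup_{0\le s\le t}(W_s+s)\rfloor\big]=\sum_{k\ge1}\mathbb P(\tau_{kx_0}\le t)$, where $\tau_b:=\inf\{s\ge0:W_s+s=b\}$ is the first passage time of the unit--drift Brownian motion $B_s:=W_s+s$ to level $b>0$. Since $B_0=0$ and $B$ is continuous, the levels $x_0,2x_0,\dots$ are reached in increasing order, and the strong Markov property writes $\tau_{kx_0}=\eta_1+\cdots+\eta_k$ with $\eta_j$ i.i.d.\ copies of $\tau_{x_0}$ (an inverse Gaussian law), of mean $\mathbb E[\tau_{x_0}]=x_0$ (the standard mean first passage formula for drift--$1$ Brownian motion, or optional sampling of the martingale $W$ at $\tau_{x_0}$, licit as $\mathbb E[\tau_{x_0}]<\infty$, giving $0=\mathbb E[W_{\tau_{x_0}}]=x_0-\mathbb E[\tau_{x_0}]$). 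Hence $\mathfrak M_t(\ell)=\sum_{k\ge1}\mathbb P(\eta_1+\cdots+\eta_k\le t)=m(t)$ is exactly the renewal function of the renewal process with inter--arrival law $\eta_1$, and the target becomes $m(t)\le t/\mathbb E[\eta_1]$.

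\emph{The key inequality, and the main obstacle.} The Laplace exponent is $\widehat\varphi(\theta):=\mathbb E[e^{-\theta\eta_1}]=e^{x_0(1-\sqrt{1+2\theta})}$, and here the hypothesis $x_0\ge1$ enters decisively: with $c:=x_0\ge1$ and $\theta>0$, $\tfrac{\mathrm d}{\mathrm d\theta}\big[c(\sqrt{1+2\theta}-1)-\ln(1+c\theta)\big]=c\big(\tfrac1{\sqrt{1+2\theta}}-\tfrac1{1+c\theta}\big)\ge0$ because $(1+c\theta)^2\ge1+2\theta\iff2c+c^2\theta\ge2$; integrating from $\theta=0$ gives $\ln(1+x_0\theta)\le x_0(\sqrt{1+2\theta}-1)$, i.e.\ $\widehat\varphi(\theta)(1+x_0\theta)\le1$ for all $\theta\ge0$. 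Using the standard identity $\int_0^\infty e^{-\theta t}m(t)\,\mathrm dt=\tfrac1\theta\cdot\tfrac{\widehat\varphi(\theta)}{1-\widehat\varphi(\theta)}$, this is precisely equivalent to $\int_0^\infty e^{-\theta t}m(t)\,\mathrm dt\le\tfrac1{x_0\theta^2}=\int_0^\infty e^{-\theta t}\tfrac t{x_0}\,\mathrm dt$ for every $\theta>0$. The main obstacle is upgrading this transform--level inequality to the pointwise bound $m(t)\le t/x_0$: Laplace--transform domination does not force pointwise domination, and one cannot shortcut it by a crude estimate, because the inequality is essentially tight --- $m(t)-t/x_0\to\frac{\mathrm{Var}(\eta_1)-x_0^2}{2x_0^2}=\frac{1-x_0}{2x_0}$ as $t\to\infty$, which vanishes in the limit $x_0=1$, and the renewal density $m'$ is not even $\le1/x_0$ pointwise (it overshoots in an intermediate range, the inverse Gaussian density itself exceeding $1/x_0$ there), so any argument that splits off the slope $s/x_0$ and invokes $e^{(1)}_t\le t/x_0$, or applies Markov/Chernoff to the individual probabilities $\mathbb P(\tau_{kx_0}\le t)$, loses an additive $O(1)$ and misses the bound. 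I would close the gap either by propagating the sign of $t/x_0-m(t)$ along the renewal equation $m=\chi+\chi*\mathrm dm$ with $\chi(t)=\tfrac1{x_0}\int_0^t(1-F(t'))\,\mathrm dt'-F(t)$ ($F$ the law of $\eta_1$), or by estimating $\sum_{k\ge1}\mathbb P(\tau_{kx_0}\le t)$ directly from the reflection--principle formula $\mathbb P(\tau_b\le t)=\Phi(\tfrac{t-b}{\sqrt t})+e^{2b}\Phi(\tfrac{-t-b}{\sqrt t})$, pairing the sub--one terms with $kx_0<t$ against the terms with $kx_0>t$ and using $x_0\ge1$ to control the block near $kx_0\approx t$; this near--boundary balance is the technical heart of the statement.
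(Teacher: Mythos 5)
Your reduction to $\mathfrak M_t(\ell)\le t/x_0$ via monotonicity, the renewal reformulation $\mathfrak M_\cdot(\ell)=m(\cdot)$ with inter-arrival law $\tau_{x_0}$, and the Laplace-transform inequality $\widehat m(\theta)\le\widehat\ell(\theta)=1/(x_0\theta^2)$ (derived cleanly from $\ln(1+x_0\theta)\le x_0(\sqrt{1+2\theta}-1)$ for $x_0\ge1$) are all correct and match the paper's set-up. But you then stop at the transform level and --- to your credit --- say so explicitly: you observe that Laplace-transform domination does not imply pointwise domination, and you list two possible ways to finish without carrying either out. That unfinished step is exactly where the mathematical content of the bound $\mathfrak M_t(\ell)\le t/x_0$ lives, so as written this is a genuine gap rather than a proof.

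The paper closes the gap (in Appendix~\ref{Appendix: (2.31)}, proving~\eqref{eq: RENEW}) by asserting something strictly stronger than your inequality: full complete monotonicity of $\widehat{\ell-m}$, i.e.\ $(-1)^k\frac{d^k}{d\theta^k}\widehat{\ell-m}(\theta)\ge 0$ for \emph{every} $k\ge0$ and $\theta>0$ (this is where $x_0\ge1$ enters, and it fails for $x_0<1$). Post's inversion formula then yields the pointwise bound
\[
(\ell-m)(t)\;=\;\lim_{k\to\infty}\frac{(-1)^k}{k!}\Big(\frac{k}{t}\Big)^{k+1}\widehat{(\ell-m)}^{(k)}\!\Big(\frac{k}{t}\Big)\;\ge\;0\,.
\]
What you proved, $\widehat\varphi(\theta)(1+x_0\theta)\le1$, is precisely the $k=0$ term of this one-parameter family of inequalities. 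To finish along the paper's line you would need to control all higher derivatives of $\widehat\ell-\widehat m$ as well; alternatively you would have to carry one of your two sketched routes (sign-propagation through the renewal equation, or a direct near-boundary balancing of the terms $\mathbb P(\tau_{kx_0}\le t)$) through to the end, which is nontrivial since, as you yourself observe, the renewal density $m'$ overshoots $1/x_0$ in an intermediate range.
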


\begin{proof}
For every $\, e \in \mathcal L\,$ we have $\, e_{t} \le \ell (t) \, =\, t / x_{0}\,$, $\, t\ge 0 \,$ and hence if $\, x_{0} \ge 1\,$, then 
\[
0 \le \mathfrak M_{t}(e) \, =\,  \mathbb E \big[ \lfloor \sup_{0 \le s \le t } ( \widehat{W}_{s } + e_{s})^{+} \rfloor \big] \, =\, \sum_{k=1}^{\infty} \mathbb P \big ( \sup_{0 \le s \le t} ( \widehat{W}_{s} + e_{s})^{+} \ge k \big) 
\le \sum_{k=1}^{\infty} \mathbb P\big ( \sup_{0 \le s \le t } ( W_{s} + s)^{+} \ge k x_{0} \big) 
\]
\begin{equation} \label{eq: RENEW}
\, =\, \frac{1}{\, 2\, } \sum_{k=1}^{\infty}  \Big( \text{Erfc} \Big( \frac{\, k x_{0}\, }{\sqrt{ \, 2t \, }} - \sqrt{\frac{t}{2}} \Big) + e^{2 x_{0} k } \, \text{Erfc} \Big( \frac{\, k x_{0}\, }{\sqrt{ \, 2t \, }} + \sqrt{\frac{t}{2}} \Big ) \Big) 
\le \frac{t}{\, x_{0}\, }  \, 
\end{equation}
for every $\,t \ge 0 \,$. The last inequality in (\ref{eq: RENEW}) may be directly verified in some numerical approximation of the infinite series by the corresponding finite sum. See Appendix \ref{Appendix: (2.31)} for the formal proof of the last inequality in (\ref{eq: RENEW}) by the renewal theory. If $\,x_{0} < 1\,$, then the last inequality (\ref{eq: RENEW}) does not necessarily hold for some small  $\, t \ge 0\,$. Thus we obtain the claim. 
\end{proof}

The differentiability of $\,t \to  \mathcal M_{t} (e) \,$ may be shown as in Proposition 3.1 of \cite{MR3349003}.

\bigskip 

If $\, x > y \,$ but $\, {\bm \{ } x {\bm \}} <  {\bm \{ } y {\bm \}}  \,$ for some $\,x , y \in \mathbb R_{+}\,$, then $\, {\bm \{} x - y {\bm \}} \, =\,  1 + {\bm \{ } x {\bm \}} - {\bm \{ } y {\bm \} } \ge { \bm \{ }  x {\bm \}} \,$ and hence, $\, \lfloor x \rfloor \le  x \, =\, \lfloor x \rfloor + {\bm \{ } x {\bm \}} \, \le  \lfloor x \rfloor  + {\bm \{ } x - y {\bm \}} \,$. This observation with (\ref{eq: INEQa}) implies that if $\,\lVert e^{(n+1)} - e^{(n)}\rVert_{T} < 1\,$, i.e., $\,\lVert e^{(n+1)} - e^{(n)}\rVert_{T} \, =\, {\bm \{} \lVert e^{(n+1)} - e^{(n)}\rVert_{T} {\bm \}}  \,$, then 
\[
\Big \{  {\bm \{} \sup_{0 \le s \le t} ( \widehat{W}_{s} + e^{(n+1)}_{s})^{+} {\bm \}}  < {\bm \{} \sup_{0 \le s \le t} ( \widehat{W}_{s} + e^{(n)}_{s})^{+} {\bm  \}} \Big \} 
\]
\[
\subseteq  \bigcup_{k=1}^{\infty } \Big \{ 
k \le \sup_{0 \le s \le t} ( \widehat{W}_{s} + e^{(n+1)}_{s})^{+} < k +  {\bm \{ } \lVert e^{(n+1)} - e^{(n)}\rVert_{T} {\bm \}}  \Big \} \, 
\]
and hence,  
\begin{equation}  \label{eq: SL}
\begin{split}
&  \sup_{0 \le t \le T} \mathbb P \big ( {\bm \{} \sup_{0 \le s \le t} ( \widehat{W}_{s} + e^{(n+1)}_{s})^{+} {\bm \}}  < {\bm \{} \sup_{0 \le s \le t} ( \widehat{W}_{s} + e^{(n)}_{s})^{+} {\bm  \}} \big) 
\\ 
 & \le \sup_{0 \le t \le T} \sum_{k=1}^{\infty} \mathbb P \Big (  \sup_{0 \le s \le t} ( \widehat{W}_{s} + e^{(n+1)}_{s})^{+}  \in (k,  k + {\bm \{} \lVert e^{(n+1)} - e^{(n)}\rVert_{T} {\bm \}}  ) \Big) \, . 
 \end{split}
\end{equation}

Now let us write $\, \varepsilon \, :=\, {\bm \{} \lVert e^{(n+1)} - e^{(n)}\rVert_{T} {\bm \}}  \in (0, 1) \,$. Combining (\ref{eq: SL}) with the inequality 
\begin{equation} \label{eq: SHEPP}
\sum_{k=1}^{\infty} \mathbb P \Big (  \sup_{0 \le s \le t} ( \widehat{W}_{s} + e^{(n+1)}_{s})^{+}  \in (k,  k + \varepsilon ) \Big)   \le \, \sum_{k=1}^{\infty} \mathbb P \Big ( \sup_{0 \le s \le t}\Big( \widehat{W}_{s} + \frac{s}{\, x_{0}\, } \Big)^{+} \in (k, k+  \varepsilon ) \Big ) , 
\end{equation}
we may find $\, \delta_{0, T, x_{0}} \in (0, 1)\, $ such that 
\begin{equation} \label{eq: INEQb}
\begin{split}
 & \sup_{0 \le t \le T} \mathbb P \big ( {\bm \{} \sup_{0 \le s \le t} ( \widehat{W}_{s} + e^{(n+1)}_{s})^{+} {\bm \}}  < {\bm \{} \sup_{0 \le s \le t} ( \widehat{W}_{s} + e^{(n)}_{s})^{+} {\bm  \}} \big) \\
& \le \sup_{0 \le t \le T} \, \sum_{k=1}^{\infty} \mathbb P \Big ( \sup_{0 \le s \le t}\Big( \widehat{W}_{s} + \frac{s}{\, x_{0}\, } \Big)^{+} \in (k, k+  \varepsilon )  
 \Big ) \\
& \, =\,  \sum_{k=1}^{\infty} \frac{1}{2} \Big [ \text{Erfc} \Big( \frac{x_{0} k }{\sqrt{2 T}} - \sqrt{ \frac{T}{2}} \Big)  + e^{2 x_{0} k } \text{Erfc} \Big( \frac{x_{0} k }{\sqrt{2 T}} + \sqrt{ \frac{T}{2}} \Big) \\
& ~~~~~~~~~~~~~~~~~~~~~~~ - \text{Erfc} \Big( \frac{x_{0} (k+\varepsilon) }{\sqrt{2 T}} - \sqrt{ \frac{T}{2}} \Big)   - e^{2 x_{0} (k+\varepsilon)} \text{Erfc} \Big( \frac{x_{0} (k+\varepsilon) }{\sqrt{2 T}} + \sqrt{ \frac{T}{2}} \Big) \Big]  \\
& \le \delta_{0, T, x_{0}} \cdot \varepsilon \, =\, \delta_{0, T, x_{0}} \cdot  {\bm \{ } \lVert e^{(n+1)} - e^{(n)}\rVert_{T}  {\bm \}} \, 
\end{split}
\end{equation}
for every $\,n \ge 0\,$. Note that $\, \lim_{T /x_{0}^{2} \to \infty} \delta_{0, T, x_{0}} \, =\,  1 \,$. Thus if $\,\lVert e^{(n+1)} - e^{(n)}\rVert_{T} < 1\,$, then combining (\ref{eq: INEQ}) with (\ref{eq: INEQb}), we obtain 
\begin{equation} \label{eq: CTRa}
\lVert e^{(n+2)} - e^{(n+1)} \rVert _{T} \le \delta_{0, T, x_{0}}  \,  \lVert e^{(n+1)} - e^{(n)} \rVert_{T}  \, ; \quad n \ge 0 \, . 
\end{equation}
Since we have $\, e^{(1)}_{t} \le t \, / \, x_{0}\,$ by Lemma \ref{lm: CLL} and $\, \lVert e^{(1)} - e^{(0)} \rVert_{t} \, =\,  e^{(1)}_{t} < 1\,$ for $\, 0 \le t < x_{0}\,$, we have the following conjecture.

\noindent {\bf Conjecture :} Assume that $\, \{e^{(n)}\}\, $ is generated by the recipe in (\ref{eq: RE}). For every $\,T \,$ there exists $\, \delta \in (0, 1) \,$ such that 
\begin{equation} \label{eq: CONJ}
 \sup_{0 \le t \le T} \mathbb P \big ( {\bm \{} \sup_{0 \le s \le t} ( \widehat{W}_{s} + e^{(n+1)}_{s})^{+} {\bm \}}  < {\bm \{} \sup_{0 \le s \le t} ( \widehat{W}_{s} + e^{(n)}_{s})^{+} {\bm  \}} \big) \, \le \, \delta \, {\bm \{} \lVert e^{(n+1)} - e^{(n)} \rVert_{T} {\bm \}} \, ; \quad n \ge 0 \, . 
\end{equation}
If this conjecture holds, then we see the contraction 
\begin{equation} \label{eq: CONT}
\lVert e^{(n+2)} - e^{(n+1)} \rVert _{T} \, \le \, \lfloor \lVert e^{(n+1)} - e^{(n)}\rVert_{T} \rfloor + \delta \, {\bm \{} \lVert e^{(n+1)} - e^{(n)} \rVert_{T} {\bm \}} \le \big( 1  - ( 1 - \delta ) c \big) \,  \lVert e^{(n+1)} - e^{(n)} \rVert_{T} 
\end{equation}
conditionally on $\, \, {\bm \{} \lVert e^{(n+1)} - e^{(n)} \rVert_{T} {\bm \}} \, / \, \lVert e^{(n+1)} - e^{(n)} \rVert_{T} \, \ge \, c > 0 \,$ for some constant $\, c > 0 \,$.  If $\,x_{0} \ge 1\,$, a unique limit $\, e^{\ast}_{t} \, =\,  \lim_{n\to \infty} e^{(n)}_{t} \, $, $\, 0 \le t < x_{0} \, $ exists and satisfies the fixed point property: 
\[
e^{\ast}_{t} \, =\,  \mathfrak M_{t}(e^{\ast}) \, ; \quad 0 \le t < x_{0} \, . 
\]

Figure \ref{fig: iteration-p} shows the convergence of Picard iteration of the map $\, \mathfrak M_{\cdot}(e^{i}) \,$, $\, i \, =\, 1, 2, \,$ with initial input $\, e^{0}_{\cdot}\equiv 0\,$ in (\ref{eq: ME}), when $\, a \, =\,  0\,$ and the initial value $\,x_{0}\,$ is distributed in a stationary distribution from section \ref{sec:computee0}.

\begin{figure}[H]
\begin{center}
\includegraphics[scale=0.3]{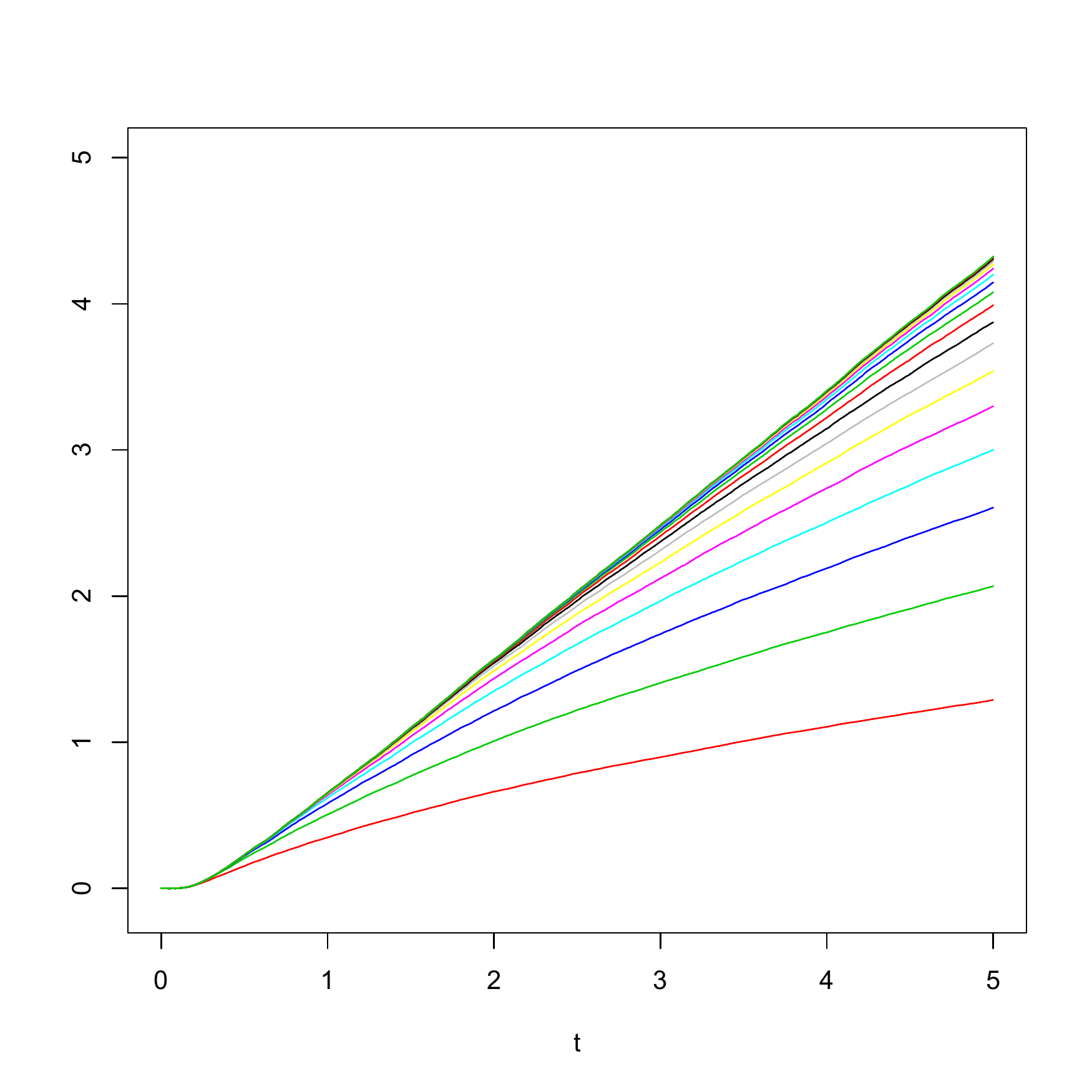} 
\end{center}
\caption{\label{fig: iteration-p}
The iteration of the map $\,\mathfrak M_{t} (e^{i})\,$, $\, 0 \le t \le T\,$, $\, i \, =\,  1, 2, \ldots , 21\,$ in (\ref{eq: ME}) is shown under the stationary initial distribution. A fixed point $\,e^{\ast}_{t} \,$, $\, 0 \le t \le T\,$ of the map $\,\mathfrak M_{\cdot} (\cdot)\,$ is shown as the maximum curve, when $\, a \, =\,  0\,$.} 
\end{figure}

\subsection{Evolutionary FP equation}

\paragraph{\textbf Derivation of the Fokker-Plank equation}

Let us consider the stochastic integral equation 
\begin{equation}
\mathcal X_{t} \, =\,  \mathcal X_{0} + (- a) \int^{t}_{0} ( \mathcal X_{s} - \mathbb E [ \mathcal X_{s}] ) {\mathrm d} s + W_{t} + \int^{t}_{0} \mathbb E [ \mathcal X_{s-} ] {\mathrm d} \mathcal M_{s} -  \alpha \int^{t}_{0 }\mathbb E [ \mathcal X_{s-}] {\mathrm d}_{s} \mathbb E  [ \mathcal M_{s}] \, ; \quad t \ge 0 \, , 
\end{equation}
where $\, a \ge 0 \,$, $\,\alpha \in \mathbb R \,$, $\, \mathcal M_{t} \, :=\, \sum_{k=1}^{\infty} {\bf 1}_{\{\tau^{k} \le t \} }\,$ and $\, \tau^{k} \, := \inf \{ s > \tau^{k-1}: \mathcal X_{s-} \le 0 \} \,$, $\,k \ge 1\,$ with $\, \tau^{0} \, :=\,  0 \,$, and $\, W_{\cdot}\,$ is a Brownian motion on a filtered probability space. Assume for a moment that the process $\, \{ \mathcal X_{t}, t \ge 0 \} \,$ is well defined with a uniquely determined probability distribution on some probability space $\, (\Omega, \mathcal F, \mathbb P)\,$. Here $\, \mathcal M_{t}\,$ is cumulative number of default events $\, \{ 0 \le s \le t: \mathcal X_{s} \, =\,  0 \} \,$ until time $\,t\ge 0 \,$. We take $\, t \to \mathcal M_{t}\,$ as a c\`adl\`ag process, i.e., right continuous with left limits. 
Assuming $\, t \to \mathbb E [ \mathcal X_{t}] \,$ and $\, t \to \mathbb E [ \mathcal M_{t}]  \, =:\,  e_{t}\,$ are smooth, let us introduce its expectation $ \overline{x}_t := \mathbb E[ \mathcal X_{t}]$ and derivative $\, \dot{e}_{t} \, =\, {\mathrm d} \mathbb E [ \mathcal M _{t}] / {\mathrm d} t \,$, $\, t \ge 0 \,$. Thus we may rewrite the dynamics 
\begin{equation}
\mathcal X_{t} \, =\,  \mathcal X_{0} + (- a) \int^{t}_{0} ( \mathcal X_{s} - \overline{x}_{t} ) {\mathrm d} s + W_{t} +    \int^{t}_{0 }\overline{x}_{s} {\mathrm d} \mathcal M_{s } -  \alpha\int^{t}_{0} \overline{x}_{s}  \dot{e}_{s} {\mathrm d} {s} \, ; \quad t \ge 0 \, , 
\end{equation}
where $\, \overline{x}_{t}\, =\,  \overline{x}_{0} \cdot \exp ( ( 1- \alpha) e_{t}) \,$, $\, t \ge 0 \,$. 

The probability density function $\, p(t, x) {\mathrm d} x \, =\, \mathbb P ( \mathcal X_{t} \in {\mathrm d}  x) \,$ of $\, \mathcal X_{t}\,$ solves the Fokker-Plank equation 
\begin{equation}
\label{eq:FP-eq}
	\partial_{t} {p}(t, x) + \partial_{x} [ (-a(x- \overline{x}_{t}) - \alpha \overline{x}_{t} \dot{e}_{t})\, {p}(t, x)] - \frac{\,1\,}{\,2 \,} \partial^{2}_{xx} {p}(t, x) 
	\, =\,  
	\dot{e}_{t} \, \delta_{ \overline{x}_{t}} ( {\mathrm d} x)  \, 
\end{equation}
for $\, t > 0 \,$, $\,   x > 0 \,$, where $\,\delta_{x}({\mathrm d} x) \,$  is a Dirac measure at $\, x\,$. 
For the boundary condition we assume that  
\begin{equation}
\label{eq:FP-BC} 
\lim_{x \downarrow 0} {p}(t, x) \, =\,  0 , \quad  \lim_{x\to + \infty}  {p}(t, x) \, =\, 0 \, , \quad\lim_{x\to \infty} \partial_{x} {p}(t, x) \, =\,  0 \;,  
\end{equation}
\begin{equation}
\label{eq:FP-init} 
\lim_{t\downarrow 0 } p(t, x)  \, =\,  \mathbb P ( \mathcal X_{0} \in {\mathrm d} x ) / {\mathrm d} x \, , 
\end{equation}
and 
\begin{equation}
\label{eq:FP-dot-et}
\overline{x}_{t} \, =\,  \int^{\infty}_{0} x p(t, x) {\mathrm d} x \, , \quad \dot{e}_{t} \,=\, \frac{\,1\,}{\,2\,}\partial_{x} p ( t, 0) \, ; \quad t \ge 0 \, . 
\end{equation}
See Appendix~\ref{sec:deriv-KFP} for more details on the derivation of this equation.

We note that if the mean-field term $\,\overline{x}_{t}\,$ is forced to be a constant (say $\, {x}_{0} > 0 \,$), then the corresponding PDE is 
\[
	\partial_{t} {p}(t, x) + \partial_{x} [ (-a(x- {x}_{0}) - \alpha {x}_{0} \dot{e}_{t})\, {p}(t, x)] - \frac{\,1\,}{\,2 \,} \partial^{2}_{xx} {p}(t, x) 
	\, =\,  
	\dot{e}_{t} \, \delta_{ {x}_{0}} ( {\mathrm d} x)  \, 
\]
for $\, (t, x) \in (0, \infty) \times (0, \infty) \,$, and then after a change of variables $\, y = (x_{0} - x)/x_{0}\,$, $\, \widehat{p}(t, (x_{0} - x)/x_{0}) \, :=\,  p(t, x)\,$, we obtain another Fokker-Planck equation 
\begin{equation} \label{eq: FPeq2}
\partial_{t} \widehat{p}(t, y) + \partial_{y} [ ( - a y + \alpha \dot{e}_{t}) \widehat{p}(t,y)] - \frac{\,1\,}{\,2 x_{0}^{2}\,} \partial_{yy}^{2} \widehat{p}(t, y) \, =\,  \dot{e}_{t} \delta_{0} ( {\mathrm d} y ) \, 
\end{equation}
for $\, (t, y) \in (0, \infty) \times (-\infty, 1)\,$ with the condition corresponding to (\ref{eq:FP-BC})-(\ref{eq:FP-dot-et}). 

Our Fokker-Planck equation (\ref{eq: FPeq2}) is an extension to the Fokker-Planck equation studied in~\cite{MR2853216}:
\[
\partial_{t} \widehat{p}(t, y) + \partial_{y} [ ( - y + \alpha \dot{e}_{t}) \widehat{p}(t,y)] - \frac{\,1\,}{\,2 \,} \partial_{yy}^{2} \widehat{p}(t, y) \, =\,  \dot{e}_{t} \delta_{0} ( {\mathrm d} y ) \,.  
\] 
Indeed, with $\, a \, =\,  1\,$ and $\, x_{0} \, =\, 1 \,$, (\ref{eq: FPeq2}) reduces to the study in~\cite{MR2853216}.  

\medskip

\paragraph{\textbf Notion of solution and blow-up phenomenon }
We borrow the following notion of solution from~\cite{MR2853216}. 
\begin{definition}
\label{def:weak-sol-FP}
	We say that a pair of nonnegative functions $(p, \dot e)$ with $p \in L^\infty(\RR^+; L^1_+(0, +\infty))$ and $\dot e \in L^1_{loc,+}(\RR^+)$ is a weak solution of~\eqref{eq:FP-eq}--\eqref{eq:FP-dot-et} with initial condition $p_0(\cdot) := p(0, x) $, if for any test function $(t,x) \mapsto \phi(t,x)$, $\phi \in C^\infty([0,+\infty) \times [0,T])$ such that $\partial_{xx}^2 \phi$, $x \partial_x \phi \in L^\infty([0,+\infty) \times (0,T))$, and we have
	\begin{align*}	 
	& \int_0^T \int_0^{+\infty} p(t,x) \left[-\partial_t \phi(t,x)  - \partial_x \phi(t,x) (-a(x-x_{0}) - x_{0} \dot{e}_{t}) - \frac{\,1\,}{\,2 \,} \partial^{2}_{xx} \phi(t, x)  \right] \, {\mathrm d}x \, {\mathrm d} t 
	\\
	\, & =\,  
	\int_0^T \dot{e}_{t} \, \left[ \phi(x_{0}) - \phi(0) \right]\, dt
	 + \int_0^{+\infty} p_0(x) \phi(0, x) \, {\mathrm d}x - \int_0^{+\infty} p(T, x) \phi(T,x) \, {\mathrm d}x.
	\end{align*}
\end{definition}
By choosing test functions of the form $\phi(t,x) = \psi(t)\phi(x)$ and differentiating with respect to time variables, this definition is equivalent to having the following equation satisfied for every $\phi \in C^\infty([0,+\infty) )$ with $x \partial_x \phi \in L^\infty((0,+\infty))$, 
	\begin{equation}
	\label{eq:weak-sol-FP}
	\begin{split}	 
	& \frac{d}{dt}  \int_0^{+\infty} \phi(x) p(t,x) {\mathrm d}x
	\\
	&=
	  \int_0^{+\infty}  \left[\partial_x \phi(x) (-a(x-x_{0}) - x_{0} \dot{e}_{t}) + \frac{\,1\,}{\,2 \,} \partial^{2}_{xx} \phi(x) \right] p(t,x) \, {\mathrm d}x 
	+ \dot{e}_{t} \, \left[ \phi(x_{0}) - \phi(0) \right].
	\end{split}
	\end{equation}

The complete analysis of the existence and uniqueness of the weak solution is beyond the scope of our current study. In the following, we shall point out that the weak solution does not exist globally in time due to the blow-up phenomena, if the initial distribution $\,p_{0}(\cdot) =p (0, \cdot)\,$ concentrates near the origin. 

\begin{theorem} [Blow-up phenomenon] \label{thm: Blowup}
	Fix  $a \in \RR$  and $\, x_{0} > 0 \,$. If there exists $\,\mu > \max ( 2 a x_{0}, 1) \,$ such that the initial condition $\,p_{0}(\cdot) \,$ satisfies 
\begin{equation} \label{eq:proof-blowup-cond-p0}
\int^{\infty}_{0} e^{-\mu x} p_{0}(x) d x  \ge \frac{\,1 - e^{-\mu x_{0}}\,}{\,\mu x_{0} \,}	\, , 
\end{equation} 
then there are no global-in-time weak solutions to~\eqref{eq:FP-eq}--\eqref{eq:FP-dot-et}.
\end{theorem}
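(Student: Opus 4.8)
The plan is to use the exponential-moment technique that is standard for blow-up results of this kind. Test the weak formulation \eqref{eq:weak-sol-FP} against $\phi(x)=e^{-\mu x}$ --- an admissible test function, since $x\partial_x\phi(x)=-\mu x e^{-\mu x}$ and $\partial^2_{xx}\phi(x)=\mu^2 e^{-\mu x}$ are bounded on $(0,+\infty)$ --- and follow the quantity
\[
M(t)\;:=\;\int_0^{+\infty}e^{-\mu x}\,p(t,x)\,\mathrm{d}x .
\]
Three preliminary facts. Taking $\phi\equiv 1$ in \eqref{eq:weak-sol-FP} shows that $\int_0^{+\infty}p(t,x)\,\mathrm{d}x$ is conserved, so $M(t)\le\int_0^{+\infty}p(t,x)\,\mathrm{d}x=\int_0^{+\infty}p_0(x)\,\mathrm{d}x<+\infty$ on the whole interval of existence. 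Next, $\dot e\in L^1_{loc,+}(\RR^+)$ forces $\dot e_t\ge 0$ for a.e.\ $t$. Finally, writing $c:=(1-e^{-\mu x_0})/(\mu x_0)\in(0,1)$, the assumption \eqref{eq:proof-blowup-cond-p0} says exactly that $M(0)\ge c$.

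Inserting $\phi(x)=e^{-\mu x}$ into \eqref{eq:weak-sol-FP}, expanding the transport term, and using $\mu x_0 c=1-e^{-\mu x_0}$ to regroup the source and boundary contributions $\dot e_t\,(\mu x_0 M(t)+e^{-\mu x_0}-1)$, one obtains the identity
\[
\frac{\mathrm{d}}{\mathrm{d}t}M(t)\;=\;\int_0^{+\infty}\Bigl(\mu a(x-x_0)+\tfrac{\mu^2}{2}\Bigr)e^{-\mu x}\,p(t,x)\,\mathrm{d}x\;+\;\mu x_0\,\dot e_t\,\bigl(M(t)-c\bigr).
\]
Set $g(x):=\mu a(x-x_0)+\mu^2/2$; then $g(0)=\mu(\tfrac{\mu}{2}-a x_0)>0$ precisely because $\mu>2ax_0$. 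When $a\ge 0$ the function $g$ is nondecreasing, so $g(x)\ge g(0)>0$ on $[0,+\infty)$ and hence $\int_0^{+\infty}g(x)e^{-\mu x}p(t,x)\,\mathrm{d}x\ge g(0)M(t)$. For general $a\in\RR$ one instead writes $\int_0^{+\infty}g\,e^{-\mu x}p=g(0)M(t)+\mu a\int_0^{+\infty}x e^{-\mu x}p$ and controls the remaining term via $0\le x e^{-\mu x}\le 1/(e\mu)$, at the cost of a fixed additive constant; absorbing this constant is where the freedom to take $\mu$ large, together with the restriction $\mu>1$, is used. Using also that $\mu x_0\dot e_t(M(t)-c)\ge 0$ as soon as $M(t)\ge c$, one arrives at a differential inequality
\[
\frac{\mathrm{d}}{\mathrm{d}t}M(t)\;\ge\;\lambda\,M(t)-K \qquad\text{whenever } M(t)\ge c,
\]
with $\lambda:=\mu(\tfrac{\mu}{2}-a x_0)>0$ and some constant $K\ge 0$ (with $K=0$ when $a\ge 0$) for which $\lambda c-K>0$.

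The proof concludes with a bootstrap and Gronwall. Since $\dot e\in L^1_{loc}$, the map $t\mapsto M(t)$ is locally absolutely continuous, and at any putative first time at which $M$ would reach the level $c$ from above the inequality above yields $\mathrm{d}M/\mathrm{d}t\ge\lambda c-K>0$, which precludes a downcrossing; hence $M(t)\ge c$ throughout the interval of existence. Reinserting this into the differential inequality and integrating gives $M(t)\ge K/\lambda+(c-K/\lambda)e^{\lambda t}$, which tends to $+\infty$, contradicting the uniform bound $M(t)\le\int_0^{+\infty}p_0(x)\,\mathrm{d}x<+\infty$. Therefore no weak solution in the sense of Definition~\ref{def:weak-sol-FP} exists for all $t\ge 0$. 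The step I expect to require the most care is the control of the linear drift term $\mu a\int_0^{+\infty}(x-x_0)e^{-\mu x}p\,\mathrm{d}x$ when $a<0$: one must exchange the unbounded weight $x$ for a constant and then verify that the resulting loss stays strictly below $\lambda c$, so that the bootstrap keeping $M$ above $c$ genuinely closes; the remaining ingredients --- admissibility of the test function, absolute continuity of $t\mapsto M(t)$, and differentiation of the moment identity --- follow directly from the regularity required in Definition~\ref{def:weak-sol-FP}.
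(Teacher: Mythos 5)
Your proposal follows the same route as the paper's proof: test the weak formulation against $\phi(x)=e^{-\mu x}$, track $M(t)=\int_0^\infty e^{-\mu x}p(t,x)\,\mathrm{d}x$, show $M(t)$ stays above the threshold $c=(1-e^{-\mu x_0})/(\mu x_0)$, deduce exponential growth via $\dot M\ge \mu(\tfrac{\mu}{2}-ax_0)M$, and contradict the uniform bound. Two minor differences: you make the uniform bound on $M$ explicit by testing with $\phi\equiv 1$ to get mass conservation, where the paper just invokes that $p$ is a probability density; and you keep $M\ge c$ by a no-downcrossing bootstrap where the paper applies Gronwall directly to $\dot M\ge \mu x_0\dot e_t\,(M-c)$. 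The Gronwall version is slightly cleaner because it only needs $\mu x_0\dot e_t\ge 0$ and $M(0)\ge c$, and does not require a strict sign at the threshold. For $a\ge 0$, where your $K=0$, the two proofs agree in all essentials and are correct.

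The point to flag is the case $a<0$. You correctly identify that the drift term $\mu a\int_0^\infty x\,e^{-\mu x}p\,\mathrm{d}x$ is then negative and cannot simply be dropped, and you propose bounding $x e^{-\mu x}\le 1/(e\mu)$ at the cost of a constant $K>0$; but you then appeal to ``the freedom to take $\mu$ large,'' which is not available: $\mu$ is the fixed parameter supplied by the hypothesis~\eqref{eq:proof-blowup-cond-p0}, and enlarging it changes the assumption on $p_0$. Consequently the condition $\lambda c>K$ your bootstrap needs is not guaranteed (e.g.\ it fails when $|a|$ is large relative to $\mu$ and $x_0$), and your argument, as written, proves the statement only for $a\ge 0$. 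This is worth knowing but should not count heavily against you: the paper's own proof discards the same integral $\mu a\int x\,e^{-\mu x}p\,\mathrm{d}x$ on the strength of ``$x\ge 0$,'' which is a valid lower bound only when $a\ge 0$, so the paper also proves the result only for $a\ge 0$ despite stating it for $a\in\RR$. You were honest in flagging this as the delicate step; a fully correct proof for $a<0$ would need either a sharper bound on the drift term or a reformulated hypothesis.
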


\begin{proof}
	
The proof follows the lines of~\cite[Theorem 2.2]{MR2853216}, adapted to our setting. Let us assume there exists a global-in-time weak solution, in the sense of Definition~\ref{def:weak-sol-FP} with a test function $\phi(t,x) = \phi(x) = e^{- \mu x}$. Let us define the Laplace transform $M_\mu(t) = \int_0^\infty \phi(x) p(t,x) dx$ of $\,p(t,x)\,$. Notice that $x_0 \mu \dot{e}_{t} \geq 0$ for all $t \geq 0$ and $M_\mu(0) \geq \frac{\lambda}{x_0}$ by~\eqref{eq:proof-blowup-cond-p0} with $\lambda := \frac{\phi(0) - \phi(x_0)}{\mu} > 0$.  By~\eqref{eq:weak-sol-FP}, we have
	\begin{align}	 
	 \frac{d}{dt}  M_\mu(t)
	&=
	  \int_0^{+\infty}  \left[-\mu \phi(x) (-a(x-x_{0}) - x_{0} \dot{e}_{t}) + \frac{\,1\,}{\,2 \,} \mu^2 \phi(x) \right] p(t,x) \, dx 
	 - \lambda \mu \dot{e}_{t}
	 \notag
	\\
	&\geq
	 \mu \left[  x_{0} \dot{e}_{t} + \frac{\,1\,}{\,2 \,} \mu -a x_{0} \right]  M_\mu(t)
	 - \lambda \mu \dot{e}_{t}
	 \label{eq:proof-blowup-1}
	 \\
	 & \geq x_0 \mu \dot{e}_{t} \left[  M_\mu(t)
	 - \frac{\lambda}{x_0} \right],
	 \label{eq:proof-blowup-2}
	\end{align}
	where we used the fact that $x \geq 0$ and $\mu \geq 2 a x_0$.  Hence, by the Gronwall inequality, \eqref{eq:proof-blowup-2} implies 
	\begin{equation}
	\label{eq:proof-blowup-Mmu-1}
	M_\mu(t)\geq \frac{\lambda}{x_0}, \qquad \forall t \geq 0.
	\end{equation}
	
	Going back to~\eqref{eq:proof-blowup-1}, we obtain 
	$$
		\frac{d}{dt}  M_\mu(t) 
		\geq
		 \mu \left[ \frac{\,1\,}{\,2 \,} \mu -a x_{0} \right]  M_\mu(t),
	$$
	which implies, again by the Gronwall inequality,  
	$$
		M_\mu(t) \geq e^{\mu \left[ \frac{\,1\,}{\,2 \,} \mu -a x_{0} \right] t} M_\mu(0) \geq e^{\mu \left[ \frac{\,1\,}{\,2 \,} \mu -a x_{0} \right] t} \cdot \frac{\lambda}{x_0}.
	$$
	Since $  \frac{\,1\,}{\,2 \,} \mu -a x_{0} > 0$, the right hand side grows to $+\infty$ as $t \to +\infty$.
	On the other hand, since $\phi(x) = e^{-\mu x} \leq 1$ and $p$ is a probability density,
	$$
		M_\mu(t) = \int_0^\infty \phi(x) p(t,x) dx \leq \int_0^\infty p(t,x) dx \leq 1,
	$$
	which yields a contradiction when $t$ is large enough.
\end{proof}

For example, if the initial condition $\, p_{0}(\cdot)\,$ takes a form of triangular distribution supported by an open interval $\,(0, 2c)\,$ with $\, p_{0}(x) \, =\, {\bf 1}_{\{0 < x < c\}} \cdot  x/c^{2} + {\bf 1}_{\{c < x < 2c\}} \cdot (2c - x) / c^{2}\,$, then by choosing $\,c = x_{0} / (2a) \,$, $\, x_{0} \, =\,  0.2\,$, $\,\mu \, =\,  1\,$, $\,a \, =\, 5\,$, one can see the condition (\ref{eq:proof-blowup-cond-p0}) holds with 
\[
\int^{\infty}_{0} e^{-\mu x} p_{0}(x) {\mathrm d} x = \frac{\,( e^{x_{0}^{2}} - 1)^{2} e^{-2x_{0}^{2}}\,}{\,x_{0}^{4}\,} > \frac{\,1 - e^{-x_{0}}\,}{\,x_{0}\,} 	\, . 
\]
The probability of this triangular distribution $\,p_{0}(\cdot) \,$ is concentrated near the origin, and by Theorem \ref{thm: Blowup}, there is no global-in-time weak solution to~\eqref{eq:FP-eq}--\eqref{eq:FP-dot-et}.

The existence of steady states and the convergence to such stationary distributions have been addressed respectively in ~\cite[Theorems 3.1 and 4.1]{MR2853216}. However, our dynamics does not fit in the assumptions made for the aforementioned results (we are in the regime where, using the notations of~\cite{MR2853216}, $b = V_F - V_R = 1$ and the function $h$ depends on $N$). For this reason, we address the question of steady states in the next section by directly finding an explicit solution.

\subsection{Explicit solution for the stationary FP equation}\label{sec:computee0}

Let us look for a stationary solution to the Fokker-Planck equation~\eqref{eq:FP-eq}--\eqref{eq:FP-dot-et}. In other words, we look for a function $\, p: (-\infty, 1) \to \mathbb{R} \,$ such that, at least in a weak sense, 
\begin{equation}
\label{eq:statioFP-eq}
	\frac{d}{dx} [ (-a(x-x_{0}) - x_{0} e_0)\, {p}(x)] - \frac{\,1\,}{\,2 \,} p''(x) 
	\, =\,  
	e_0 \, \delta_{x_{0}} ( {\mathrm d} x)  \, 
\end{equation}
for $\, t > 0 \,$, $\,   x > 0 \,$. 
For the boundary condition we assume that  
\begin{equation}
\label{eq:statioFP-BC}
\lim_{x \downarrow 0} {p}(x) \, =\,  0 , \quad  \lim_{x\to + \infty} x \, {p}(x) \, =\, 0 \, , \quad\lim_{x\to \infty} p'(x) \, =\,  0 \;,  
\end{equation}
and 
\begin{equation}
\label{eq:statioFP-dot-et}
e_0 \,=\, \frac{\,1\,}{\,2\,} p' ( 0) \, . 
\end{equation}

We show that there exists an explicit solution to the stationary Fokker-Planck equation.
\begin{theorem}
\label{thm:characterize-statio-p-e0}
	{\bf (i) } When $a>0$, the following is a solution to~\eqref{eq:statioFP-eq}--\eqref{eq:statioFP-dot-et}~: for all $x \in [0, +\infty)$,
	\begin{equation}
	\label{eq:FPstationary-solutionp}
 	p(x)
 	\, = \,
	\displaystyle
	2 e_0 \left( \int_0^{\min(x,x_0)} e^{a y^2 + 2 x_0 (e_0-a) y} dy \right) e^{-a x^2 - 2 x_0 (e_0-a) x} \, , 
	\end{equation}
	where $e_0$ is uniquely determined by~:
	 \begin{equation}
	 \label{eq:FPstationary-solutione0}
	\frac{e_0}{a} \int_{\left(\sqrt{\frac{2}{a}} e_0 - \sqrt{2a}\right) x_0}^{\sqrt{\frac{2}{a}}e_0 x_0 }  e^{\frac{y^2}{2}} \Nc(-y)  dy  
	\, = \, 1 \,,
	 \end{equation}
	 where $\Nc(z) = \int_{-\infty}^z e^{\frac{-x^2}{2}} d x$, and we have an upper bound estimate 
\begin{equation} \label{eq:FPstationary-solution-e0est}
0 < e_{0} < \max \Big( \frac{\,2a \,}{\, \log (2 a x_{0}^{2}) \,}\, , \frac{\,1 \,}{\,2 x_{0}^{2}\,} (e^{2} ( 1 + 2 a x_{0}^{2}) - 1)\Big) \, . 
\end{equation}	
	 
	 {\bf (ii) } When $a = 0$, a solution is given by~:
	 \begin{equation}
	 \label{eq:FPstationary-solutionp-a0}
 	p(x)
 	\, = \, 
	2 e_0 \left( \int_0^{\min(x, x_0)} e^{2 x_0 e_0 y} dy \right) e^{ - 2 x_0 e_0 x} \;,
	\end{equation}
	with
	\begin{equation}\label{eq:FPstationary-solutione0-a0}
		e_0  = \frac{1}{x_0^2} \;.
	\end{equation}
\end{theorem}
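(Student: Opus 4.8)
The plan is to solve the stationary equation \eqref{eq:statioFP-eq} in closed form by elementary first‑order ODE methods, and then to pin down the constant $e_0$ from the two physical requirements that $p$ be a probability density with mean $x_0$. Writing $A(x):=-a(x-x_0)-x_0 e_0$, equation \eqref{eq:statioFP-eq} says that the flux $J(x):=A(x)p(x)-\tfrac12 p'(x)$ satisfies $J'=e_0\,\delta_{x_0}$ in the weak sense, so $J$ is piecewise constant with a jump of size $e_0$ across $x_0$ (we take $p$ continuous at $x_0$, so the jump is carried entirely by $p'$). From $p(0)=0$ together with \eqref{eq:statioFP-dot-et} we read off $J(0)=-\tfrac12 p'(0)=-e_0$, while the decay conditions \eqref{eq:statioFP-BC} force $J(+\infty)=0$; hence $J\equiv-e_0$ on $(0,x_0)$ and $J\equiv 0$ on $(x_0,+\infty)$. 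On each interval this is the linear equation $p'(x)+2\bigl(a(x-x_0)+x_0 e_0\bigr)p(x)=2e_0\,\mathbf{1}_{\{x<x_0\}}$, solved with the integrating factor $\mu(x)=e^{ax^2+2x_0(e_0-a)x}$; imposing $p(0)=0$ on $(0,x_0)$ and matching $p$ continuously at $x_0$ produces exactly \eqref{eq:FPstationary-solutionp} (and, when $a=0$, with $\mu(x)=e^{2x_0 e_0 x}$, formula \eqref{eq:FPstationary-solutionp-a0}). Nonnegativity is clear once $e_0>0$, and the factor $e^{-ax^2}$ (resp. $e^{-2x_0 e_0 x}$) gives $xp(x)\to0$, $p'(x)\to0$, so \eqref{eq:statioFP-BC}--\eqref{eq:statioFP-dot-et} hold for \emph{every} $e_0>0$; thus the ODE alone does not determine $e_0$.

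To fix $e_0$ when $a>0$ I would impose $\int_0^{+\infty}p(x)\,{\mathrm d}x=1$. Interchanging the order of integration (Tonelli), substituting $x=y+s$ in the inner integral, completing the square in the resulting Gaussian integral in $s$, and changing variables to the normal tail $\Nc(-\,\cdot\,)$ turns $\int_0^{+\infty}p=1$ into precisely \eqref{eq:FPstationary-solutione0}. (Testing \eqref{eq:statioFP-eq} against $x$ gives the moment identity $a\!\int_0^\infty x\,p = ax_0\!\int_0^\infty p + x_0 e_0\bigl(1-\!\int_0^\infty p\bigr)$, so for $a>0$ normalization automatically yields the mean‑field consistency $\int_0^\infty x\,p=x_0$.) When $a=0$ this identity is vacuous and formula \eqref{eq:FPstationary-solutionp-a0} integrates to $1$ for every $e_0>0$; there one computes instead $\int_0^\infty x\,p(x)\,{\mathrm d}x=\tfrac{x_0}{2}+\tfrac1{2x_0 e_0}$ and sets this equal to $x_0$, which gives $e_0=1/x_0^2$ as in \eqref{eq:FPstationary-solutione0-a0}.

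For the quantitative bound and for uniqueness when $a>0$, put $U:=x_0\sqrt{2/a}\,e_0$ and $\gamma:=x_0\sqrt{2a}$ (a fixed constant) and let $\phi(y):=e^{y^2/2}\Nc(-y)$, a constant multiple of the Gaussian Mills ratio; then \eqref{eq:FPstationary-solutione0} reads $\Psi(U):=\tfrac{U}{\gamma}\int_{U-\gamma}^{U}\phi(y)\,{\mathrm d}y=1$. The classical two‑sided bounds $\tfrac{y}{1+y^2}<\phi(y)<\tfrac1y$ for $y>0$ show that $\phi$ is convex and that $y\mapsto y\phi(y)$ is strictly increasing from $0$ to $1$. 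Convexity gives $\tfrac1\gamma\int_{U-\gamma}^{U}\phi\ge\phi(U-\tfrac\gamma2)$, so $1=\Psi(U)\ge U\,\phi(U-\tfrac\gamma2)$; inserting the Mills lower bound and simplifying gives $U<\tfrac2\gamma+\tfrac\gamma2$, i.e. $e_0<\tfrac1{x_0^2}+\tfrac a2$, which is stronger than, and hence implies, the stated bound \eqref{eq:FPstationary-solution-e0est}. Existence of a positive root of $\Psi=1$ then follows from $\Psi(0^+)=0$, continuity, and the expansion $\Psi=1+\tfrac a{2e_0}+O(e_0^{-2})$ as $e_0\to\infty$ (so $\Psi>1$ for $e_0$ large). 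For uniqueness I would argue that $\Psi-1$ changes sign exactly once, equivalently that $U\mapsto\int_{U-\gamma}^U\phi-\tfrac\gamma U$ has a single zero, by analysing the sign of $\Psi'(U)=\tfrac1\gamma\int_{U-\gamma}^U\bigl[\phi(y)(1+Uy)-U\bigr]\,{\mathrm d}y$ with the same Mills‑ratio bounds.

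The derivation of \eqref{eq:FPstationary-solutionp}--\eqref{eq:FPstationary-solutione0} and of the upper bound on $e_0$ is mechanical. The one genuinely delicate step is the \emph{uniqueness} of $e_0$: the endpoint values and the $e_0\to\infty$ asymptotics of $\Psi$ are straightforward, so the whole difficulty is concentrated in showing that the smoothed Mills‑ratio functional $\Psi$ (equivalently $e_0\mapsto\int_0^{\infty}p$) crosses the level $1$ only once, which requires a careful quantitative monotonicity argument and is where the sharp Gaussian estimates are indispensable.
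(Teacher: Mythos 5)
Your derivation of the explicit formulae agrees with the paper's, but is phrased more cleanly through the flux $J(x) = A(x)p(x) - \tfrac12 p'(x)$, whose piecewise constancy ($-e_0$ on $(0,x_0)$, $0$ on $(x_0,\infty)$) immediately produces the first-order linear ODE with source $2e_0{\bf 1}_{\{x<x_0\}}$; the paper instead writes the general two-parameter homogeneous solution and matches $p$, $p'$ at $x_0$, which amounts to the same computation. Testing the weak form against $\phi(x)=x$ to get the moment identity $a\int xp = ax_0\int p + x_0 e_0\bigl(1-\int p\bigr)$, and concluding that for $a>0$ normalisation automatically yields $\int xp = x_0$, replaces the paper's ``direct computation'' and is a genuine simplification; your treatment of $a=0$ ($\int p\equiv 1$ for every $e_0>0$, and the first moment $\tfrac{x_0}{2}+\tfrac{1}{2x_0e_0}=x_0$ fixing $e_0=1/x_0^2$) is also correct. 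Your Jensen-plus-Mills route to the $e_0$ bound, namely $1\ge U\,\phi(U-\gamma/2)$ followed by $\phi(w)>w/(1+w^2)$, yields $e_0 < \tfrac{1}{x_0^2}+\tfrac{a}{2}$, which is cleaner and strictly stronger than~\eqref{eq:FPstationary-solution-e0est}; just note that the Mills bounds prove convexity of the Mills ratio only on $y>0$, and you also need the trivial observation $\phi''(y)=(1+y^2)\phi(y)-y>0$ for $y\le 0$, since the interval $[U-\gamma,U]$ reaches into the negative half-line whenever $U<\gamma$.

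The one genuine gap is exactly where you flag it: uniqueness of $e_0$ when $a>0$. Your existence argument ($\Psi(0^+)=0$, $\Psi(U)\to 1^+$ as $U\to\infty$, continuity) is fine, but ``\,$\Psi-1$ changes sign exactly once\,'' is asserted rather than proved; analysing $\Psi'(U)=\tfrac{1}{\gamma}\int_{U-\gamma}^U\bigl[\phi(y)(1+Uy)-U\bigr]\,dy$ is delicate because the integrand changes sign on the interval and the Mills bounds alone do not obviously pin down its integral. Be warned that you cannot lean on the paper's argument here, because it contains an error: the paper claims $F'(u)>0$ for $0<u<\tau^2/4$ via the representation $\phi(c)=\int_0^\infty v(c^2+v^2)^{-1/2}e^{-v^2/2}\,dv$, but that identity is valid only for $c\ge 0$ and is applied with $c=c_1-c_2<0$ on precisely that range, where it returns $\phi(|c_1-c_2|)$ instead of $\phi(c_1-c_2)$. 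Since $\phi$ is strictly decreasing and $c_2>0$, in fact $F'(u)=\tfrac{2x_0}{\tau^3}\bigl[\phi(c_1)-\phi(c_1-c_2)\bigr]<0$ for all $u>0$, so $F$ is strictly decreasing rather than unimodal, and the paper's ``$F-G$ unimodal'' reasoning does not go through as written either. To complete the proof you must argue directly that $u\mapsto 2uF(u)=\Psi$ crosses the level $1$ exactly once — e.g.\ by establishing strict monotonicity of $\Psi$ on $(0,\infty)$ — and that step is still open in your write-up.
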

The proof is given in Appendix~\ref{sec:proofThmCharac}. The above result allows us to study how the default rate depends on the parameters of the model. As shown on Figure~\ref{fig:e0-fct-alpha-e0}, $e_0$ is decreasing with respect to $a$ (the mean-reverting term has a stronger effect and stabilizes the system) and it is also decreasing with respect to $x_0$ (banks start further away from $0$).

\begin{figure}[!h]
 \begin{center}
 \includegraphics[scale=0.7]{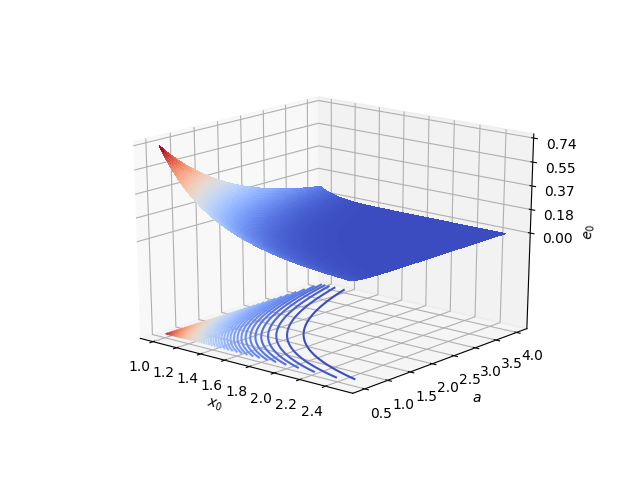}
  \end{center}
\caption{
{\label{fig:e0-fct-alpha-e0}}
$e_0$ as a function of $a$ and $x_0$.
}
\end{figure}

\begin{figure}[H]
\centering
\begin{subfigure}{.33\textwidth}
  \centering
\includegraphics[scale=0.27]{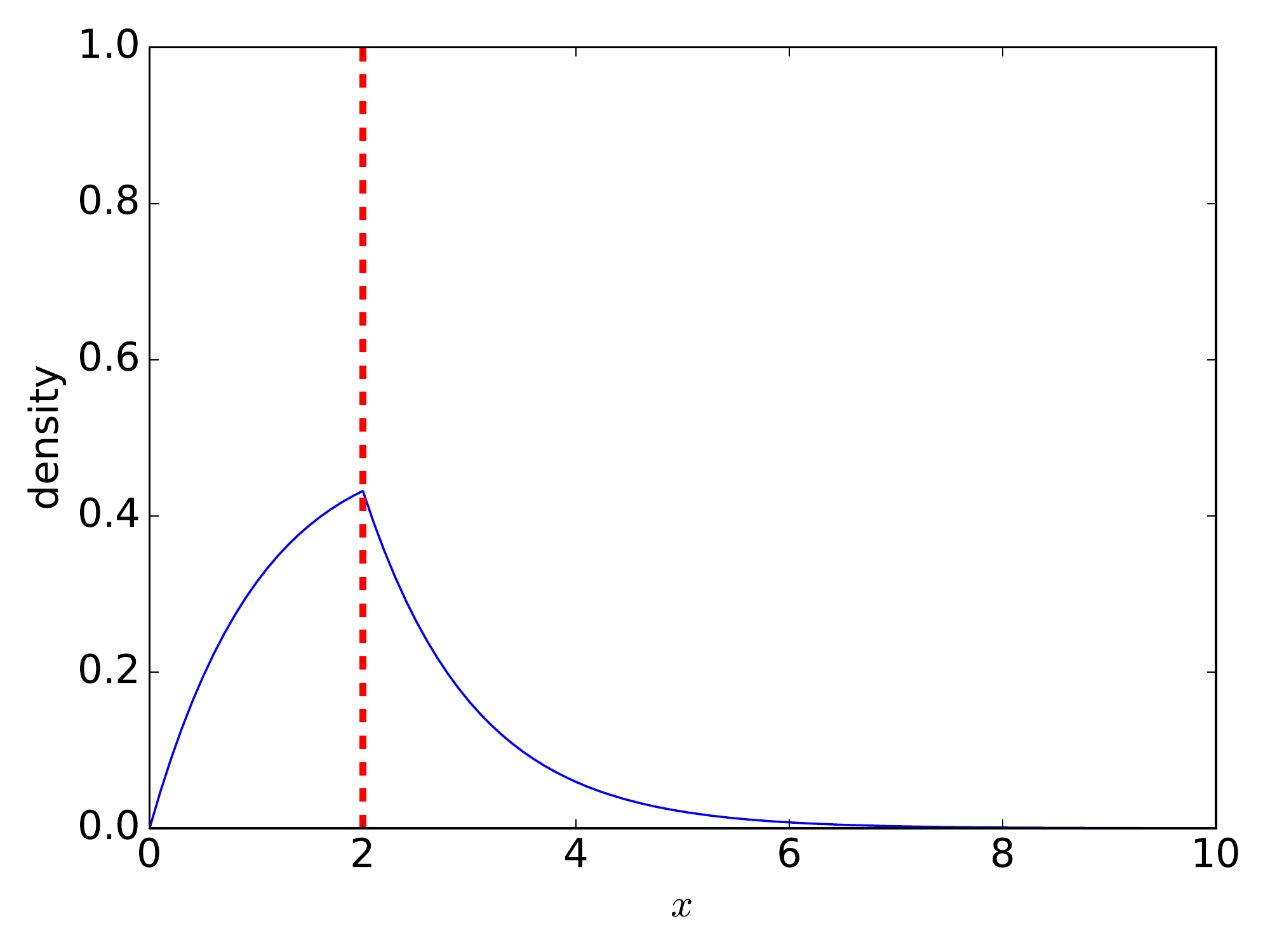}
  \caption{}
  \label{fig:test1-density-1}
\end{subfigure}%
 \hspace{-0.2cm}
\begin{subfigure}{.33\textwidth}
  \centering
  \includegraphics[scale=0.27]{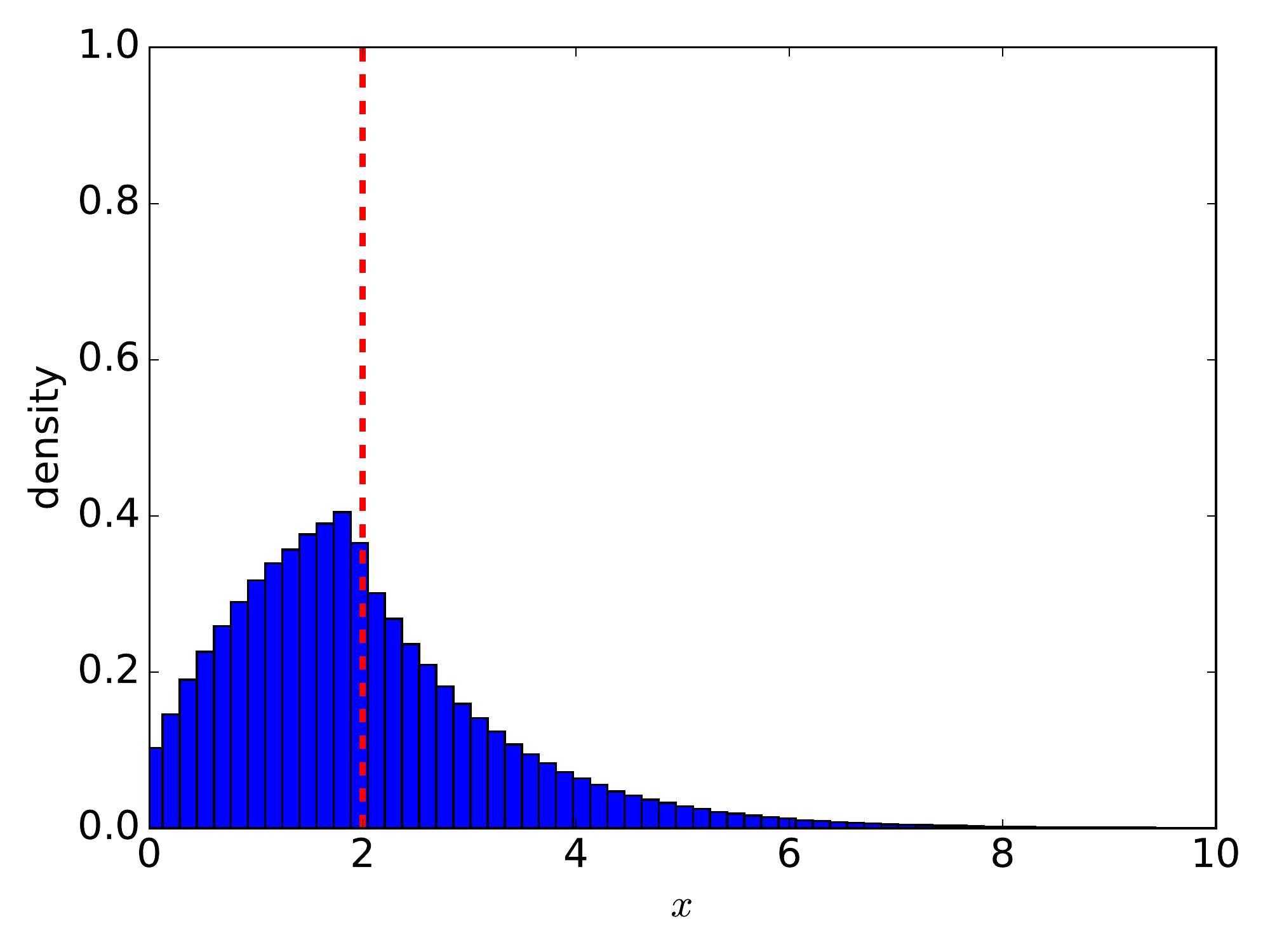}
  \caption{}
  \label{fig:test1-density-2}
\end{subfigure}%
 \hspace{-0.2cm}
\begin{subfigure}{.33\textwidth}
  \centering
    \includegraphics[scale=0.27]{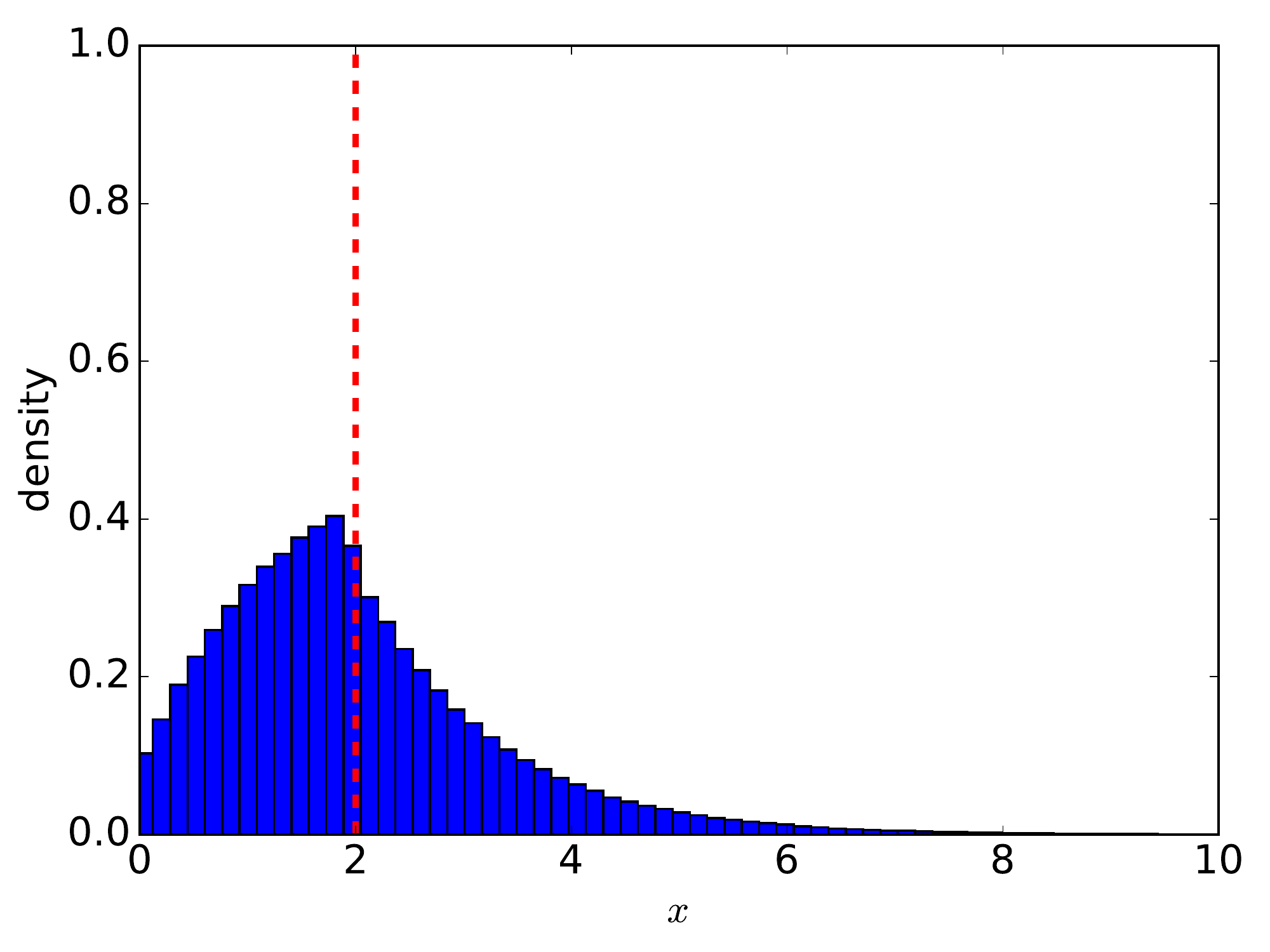}
  \caption{}
  \label{fig:test1-density-3}
\end{subfigure}
\caption{
{\label{fig:density-histo-alpha015}}
Density for  $a = 0.01125, X_0 = 2.0$~: analytical solution~\eqref{eq:FPstationary-solutionp} (left) and its approximation using $10^6$ Monte-Carlo samples following the dynamics~\eqref{eq: PS} (middle) and
~\eqref{eq: MFsta-PS}  (right), for $t = 100$.
In red (dashed line) are plotted respectively $x_0$, and the empirical averages $\overline{X}_{t}$ and $\overline{\mathcal{X}}_{t}$.
}
\end{figure}

\begin{figure}[H]
\centering
\begin{subfigure}{.33\textwidth}
  \centering
\includegraphics[scale=0.27]{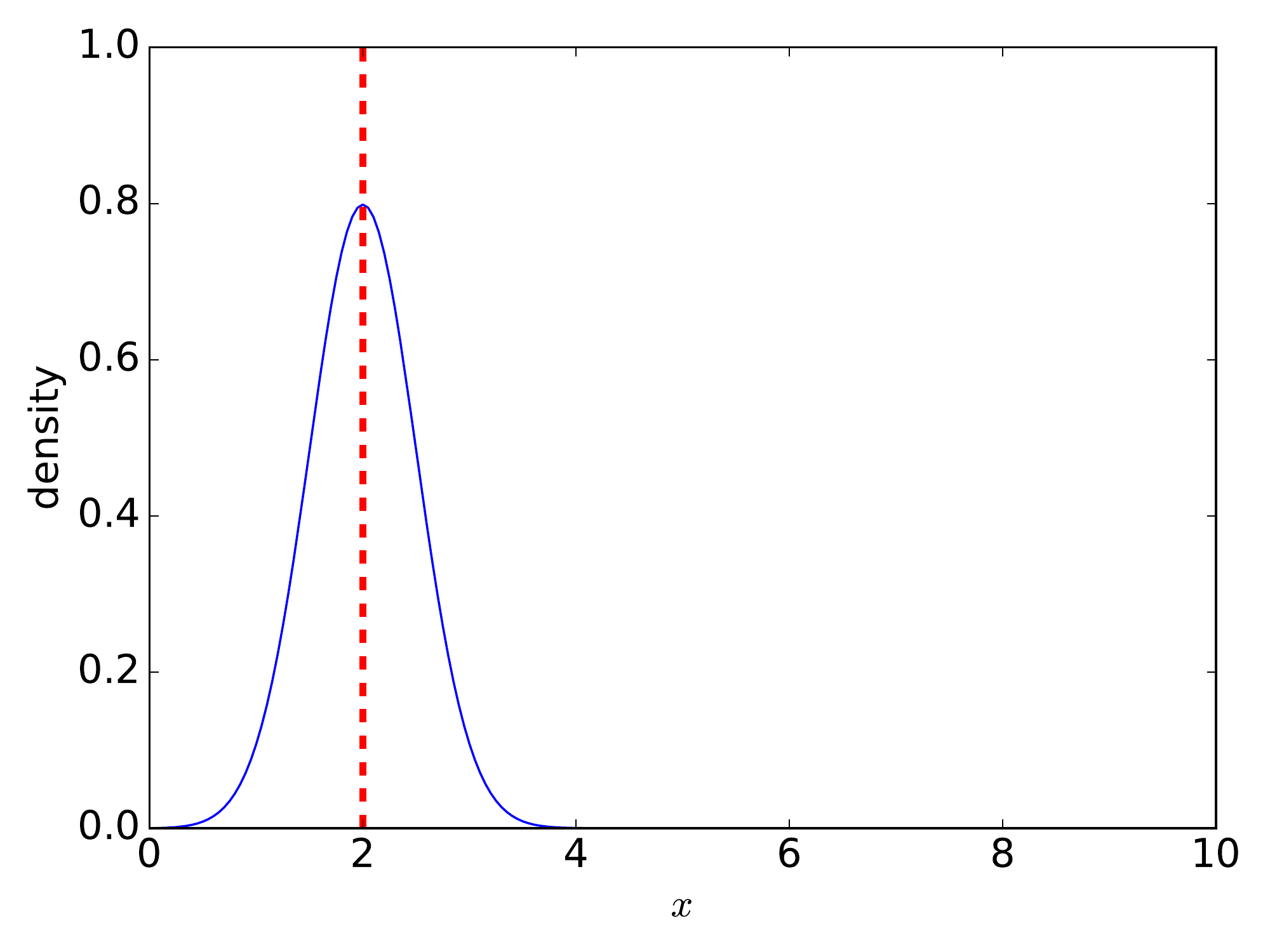}
  \caption{}
  \label{fig:test2-density-1}
\end{subfigure}%
 \hspace{-0.2cm}
\begin{subfigure}{.33\textwidth}
  \centering
 \includegraphics[scale=0.27]{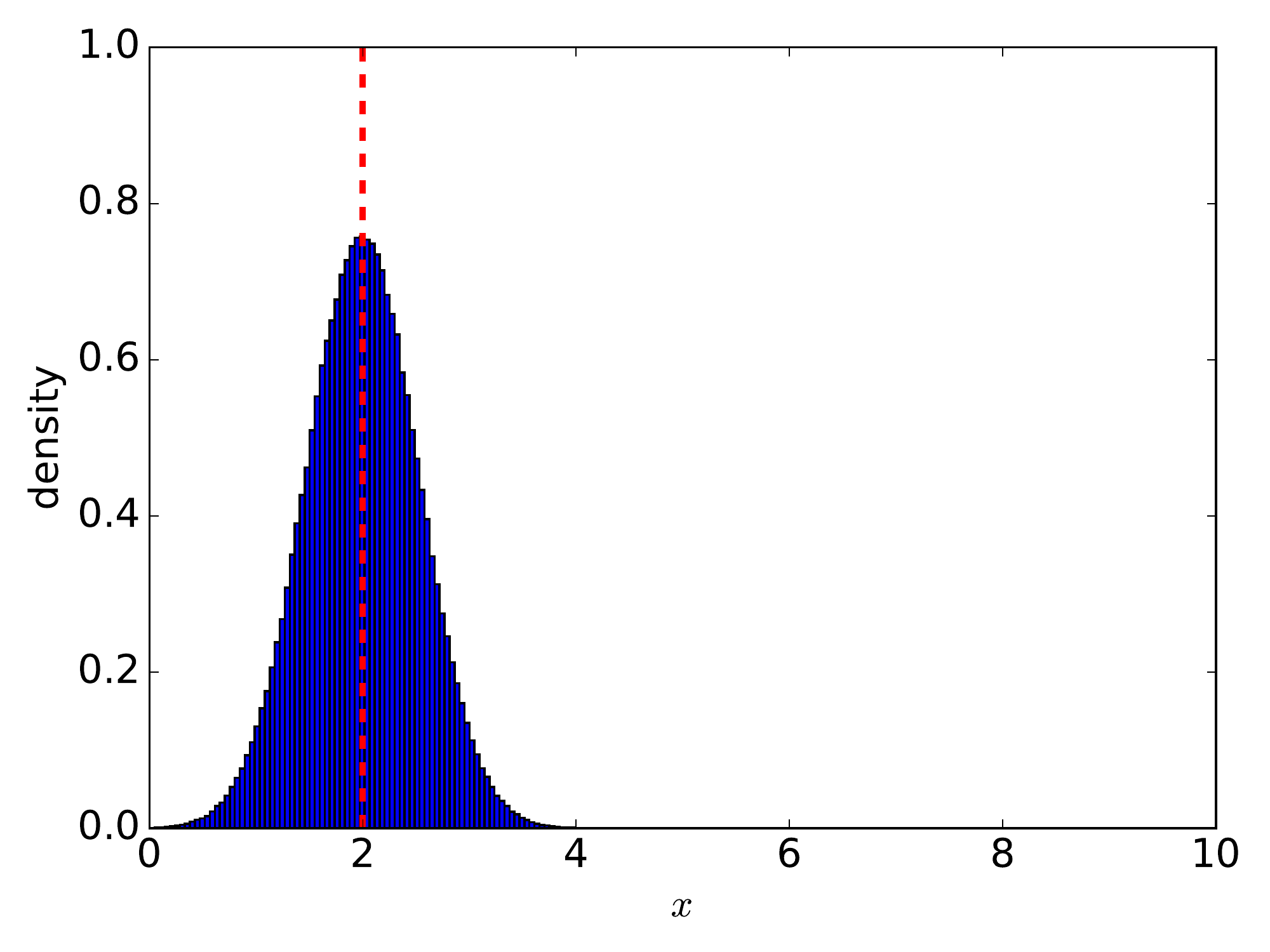}
  \caption{}
  \label{fig:test2-density-2}
\end{subfigure}%
 \hspace{-0.2cm}
\begin{subfigure}{.33\textwidth}
  \centering
 \includegraphics[scale=0.27]{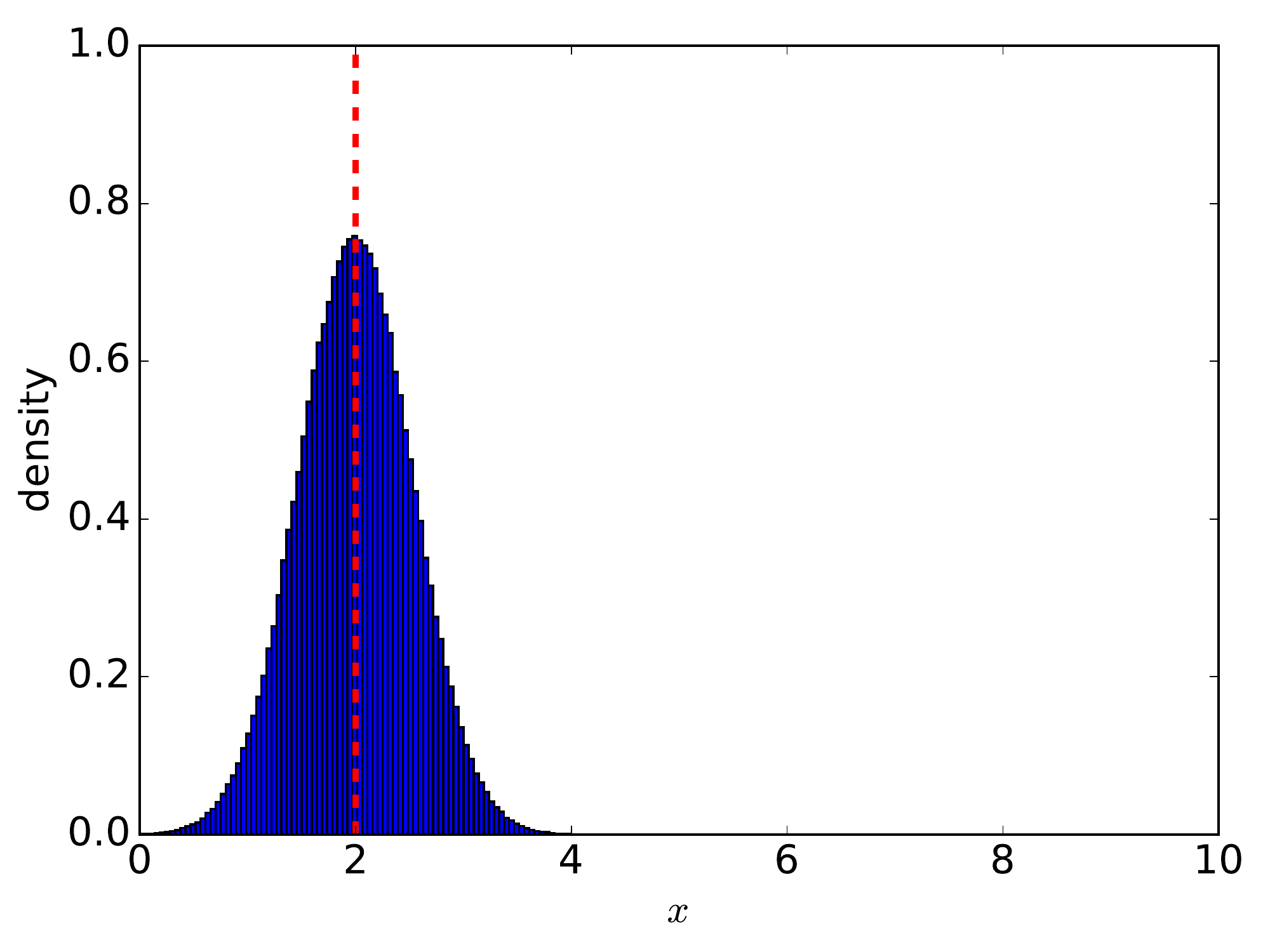}
  \caption{}
  \label{fig:test3-density-3}
\end{subfigure}
\caption{
{\label{fig:density-histo-alpha2}
Density for $a = 2.0, X_0 = 2.0$~: analytical solution~\eqref{eq:FPstationary-solutionp} (left) and its approximation using $10^6$ Monte-Carlo samples following the dynamics~\eqref{eq: PS} (middle) and 
~\eqref{eq: MFsta-PS} (right), for $t = 100$.
In red (dashed line) are plotted respectively $x_0$, and the empirical averages $\overline{X}_{t}$ and $\overline{\mathcal{X}}_{t}$.
}
}
\end{figure}

\subsection{Numerical approximation of the solution}

As shown in Figures~\ref{fig:test1-density-1}~and~\ref{fig:test2-density-1}, a larger value of $\alpha$ induces more concentration around $x_0$ in the density~\eqref{eq:FPstationary-solutionp}.

The numerical results also show that, when $t$ and $N$ are large,  this density is well approximated by the empirical distribution of Monte-Carlo simulation of trajectories according to either~\eqref{eq: PS} or an $N$-particle approximation of the mean field dynamics~\eqref{eq: MFsta} with states $\mathcal X^i_., \mathcal M^i_.$, namely:
\begin{equation} \label{eq: MFsta-PS} 
\begin{split}
	\mathcal X_{t}^{i} \, &=\, \mathcal X_{0}^i + \int^{t}_{0} b (\mathcal X_{s}^{i} , x_{0}) {\mathrm d} s + W_{t}^{i} + \int^{t}_{0} x_{0} \Big( {\mathrm d} \mathcal M_{s}^{i} - \frac{1}{\, N\, } \sum_{j \neq i} {\mathrm d} \mathcal M_{s}^{j} \Big) \, ; \quad t \ge 0 \, ,  
	\\
	\mathcal M_{t}^{i} \, &:=\, \sum_{k=1 }^{\infty} {\bf 1}_{\{\tau_{k}^{i} \le t\}} \, , \quad \tau_{k}^{i} \, :=\, \inf \Big \{ s > \tau_{k-1}^{i} : \, \mathcal X_{s-}^{i} - \frac{x_{0}}{N} \sum_{j \neq i } (\mathcal M_{s}^{j} - \mathcal M_{s-}^{j}) \, \le \,  0 \Big\} \, ; \quad k \in \mathbb N \, , 
\end{split}
\end{equation}
Conversely, it also means that the long time behavior of both systems of SDEs is well approximated by the mean field dynamics~\eqref{eq:FPstationary-solutionp} in the stationary regime.

Figure~\ref{fig:average} displays the evolution of the average liquidity amounts, $\overline X$ and $\overline{\mathcal{X}} = \frac{1}{N} \sum_{i=1}^N\mathcal{X}^i$. Although the systems are expected to have the same mean-field limit in which the mean is constant, here the values fluctuate around $x_0 = 2.0$ because there is only a finite number of agents. Note that the fluctuations are the smallest when using a large $a$ (hence a strong mean-reverting effect) and a fixed level for births (namely, $x_0$).

For the dynamics~\eqref{eq: PS} (respectively~\eqref{eq: MFsta-PS}), the default rate is given by ${\mathrm d} \left(\sum_i M^{i}\right) / {\mathrm d} t$ (resp. ${\mathrm d} \left(\sum_i \mathcal{M}^{i}\right) / {\mathrm d} t$), whereas the cumulative number of defaults is simply $\sum_i M^{i}$ (resp. $\sum_i \mathcal{M}^{i}$). Their evolutions are displayed on Figure~\ref{fig:deaths}. At time $t=0$, the default rate is $0$ because the initial distribution is $X_0 = x_0 >0$ a.s.; then, it converges towards a stationary value.

\begin{figure}[H]
 \begin{center}
 \includegraphics[scale=0.4]{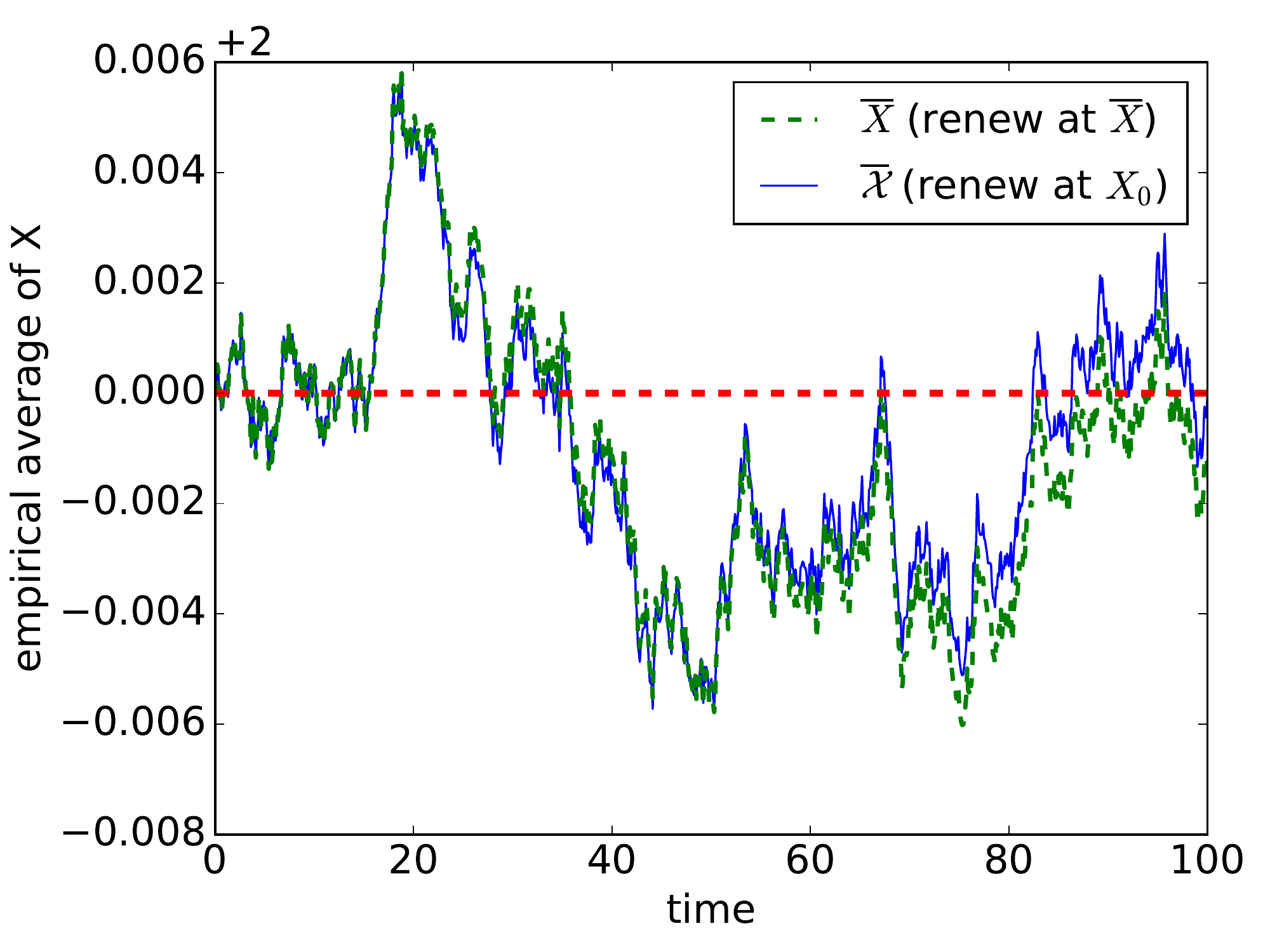}
 \includegraphics[scale=0.4]{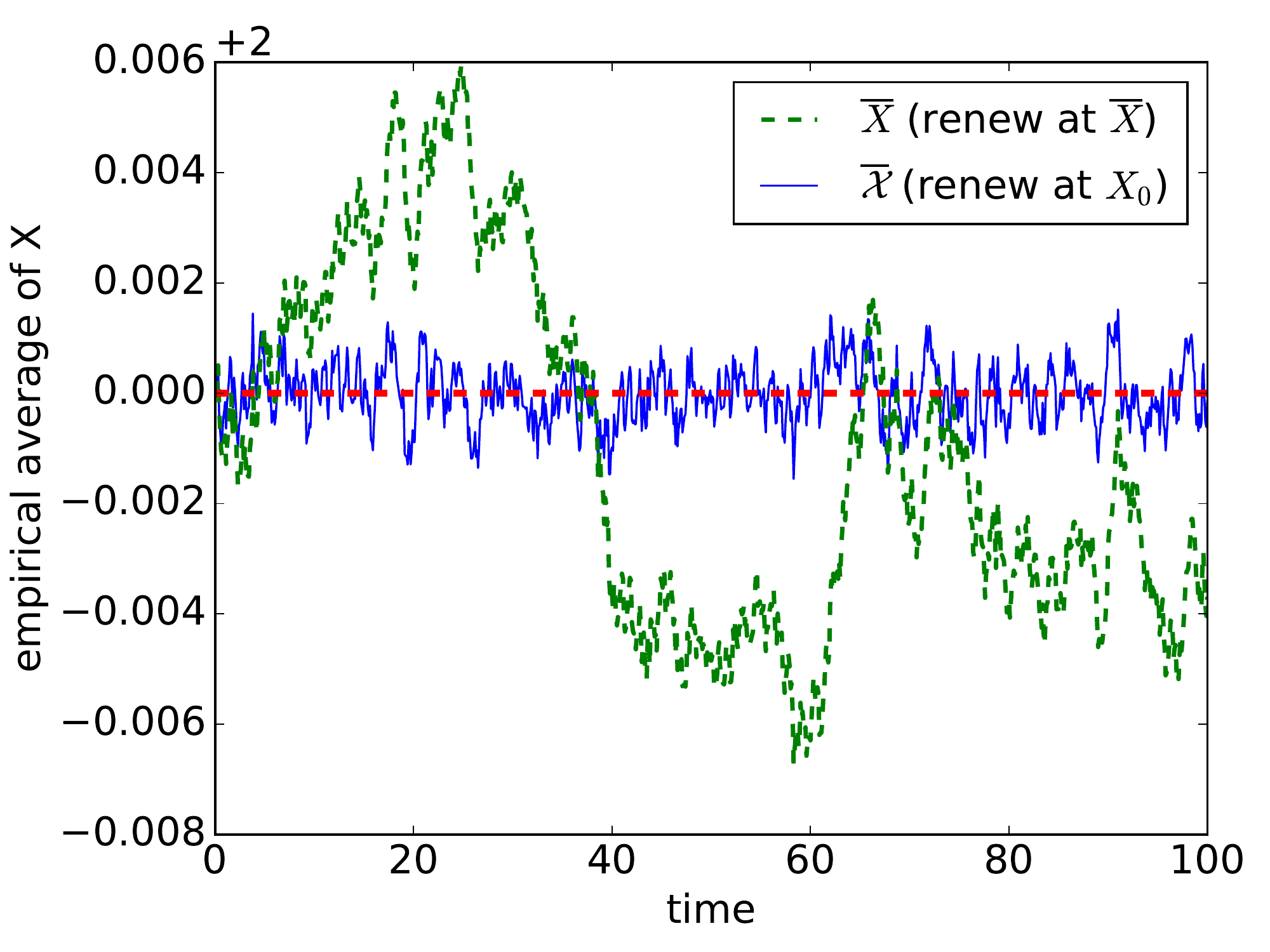}
  \end{center}
\caption{
{\label{fig:average}}
Evolution of the empirical average of the $N$-dimensional process $X$, for $N = 10^6$ Monte-Carlo samples following the dynamics~\eqref{eq: PS} 
(in green, dashed curve) and ~\eqref{eq: MFsta-PS} (in blue), with $a = 0.01125$ (left) and $a = 2.0$ (right). In both cases, the initial distribution is concentrated at $X_0 = 2.0$ (dashed line, in red).
}
\end{figure}

\begin{figure}[H]
 \begin{center}
 \includegraphics[scale=0.4]{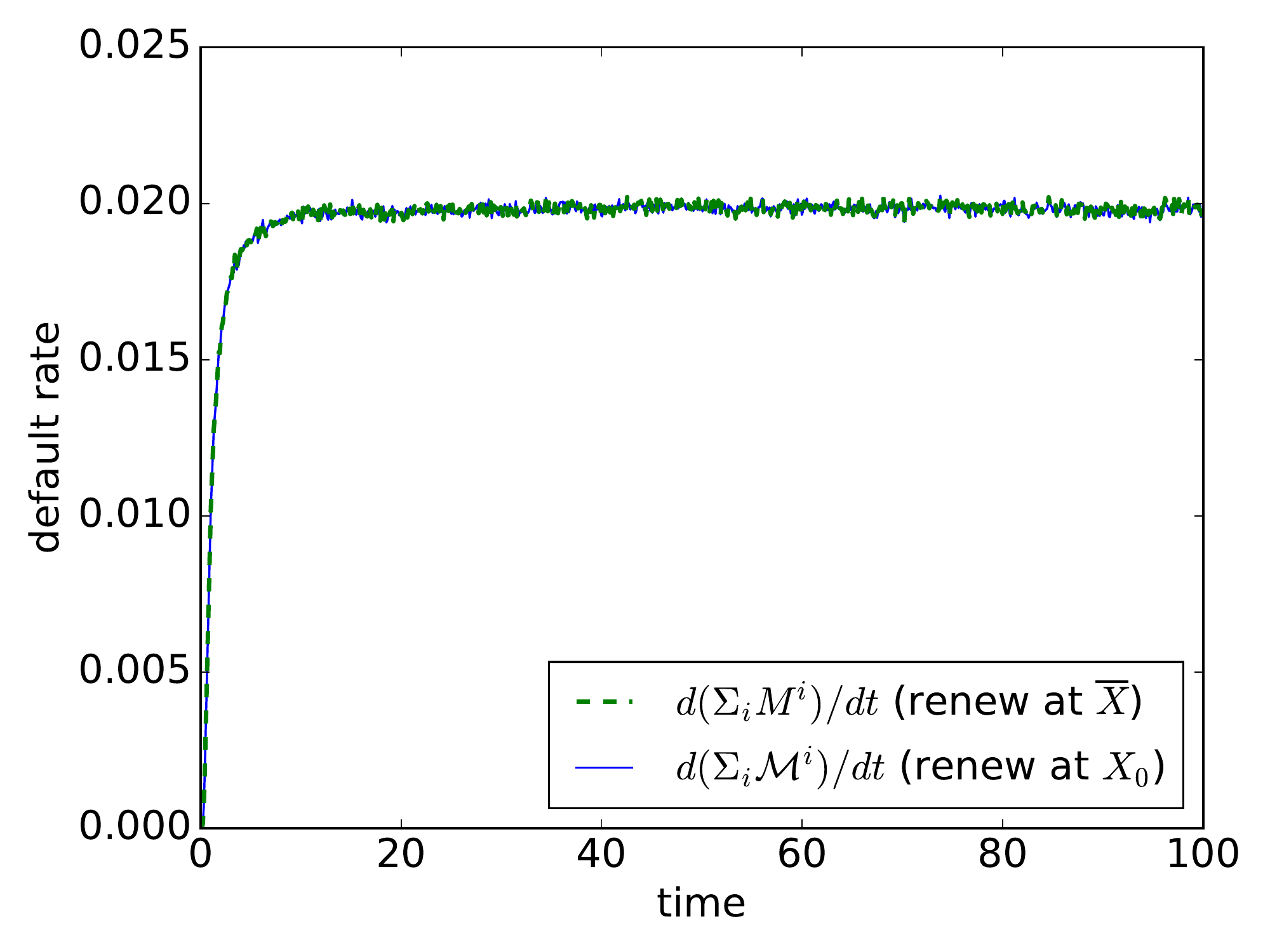}
 \includegraphics[scale=0.4]{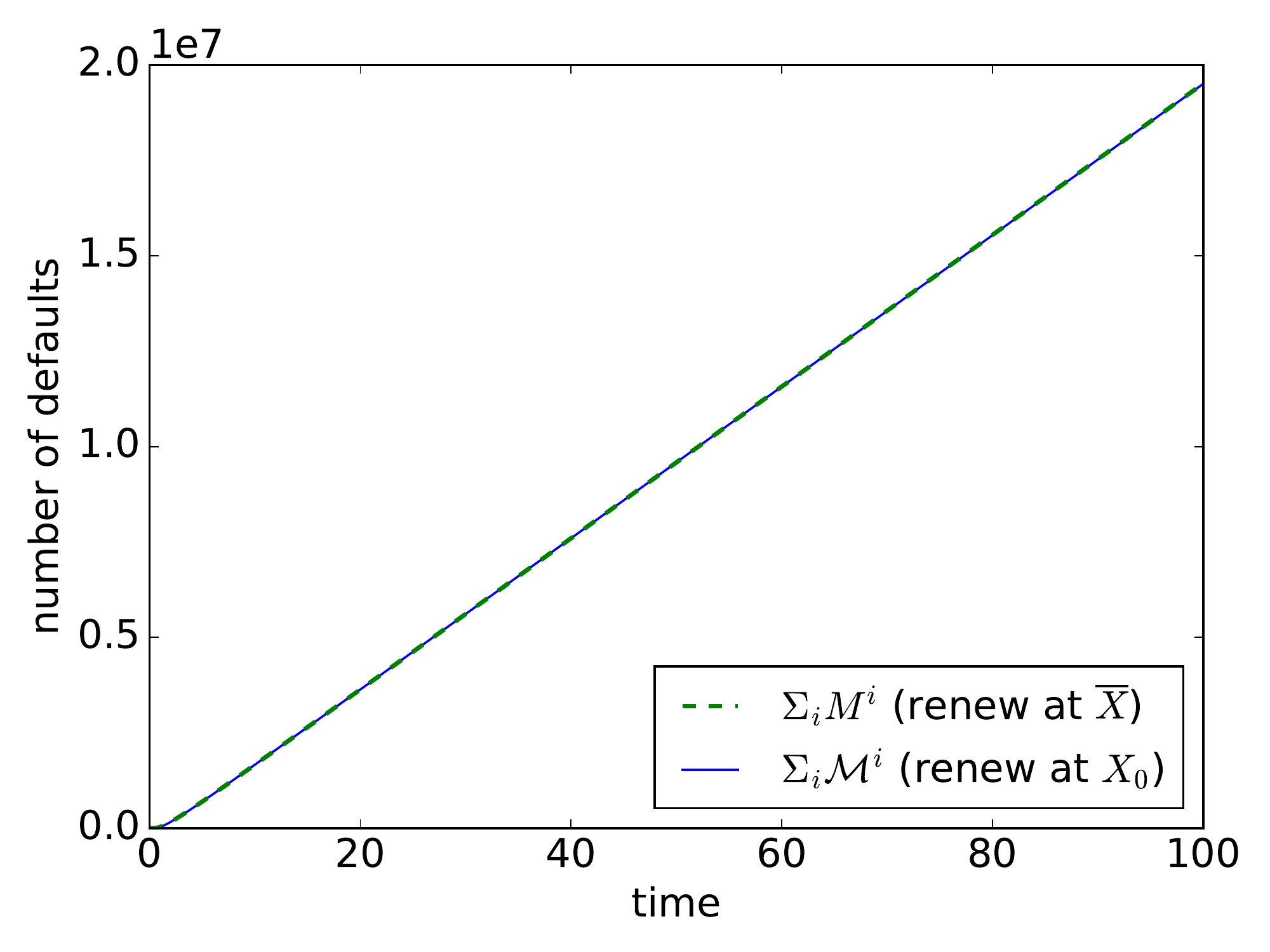}
  \end{center}
\caption{
{\label{fig:deaths}}
Evolutions of the default rate  (left) and of the cumulative number of defaults (right), for the dynamics~\eqref{eq: PS} (resp.~\eqref{eq: MFsta-PS}), with $N = 10^6$ Monte-Carlo samples, $a = 0.01125$ and $X_0 = 2.0$. The curves correspond to the dynamics~\eqref{eq: PS} (in green, dashed curve) and~\eqref{eq: MFsta-PS}  (in blue).
}
\end{figure}



\section{Mean Field Game Model} \label{sec: 3}

We now turn our attention to the situation where the agents can control their dynamics and try to optimize a certain criterion. Based on the previous section, we focus directly on the macroscopic description. We want to study Nash equilibria and, to this end, we use the framework of mean field games. The players interact through two mean field terms: the average wealth and the rate of defaults.

\subsection{Formulation of the problem} \label{sec: 3.1}

In order to describe the mean field game problem, we will use the following notations. Let $\mathcal P$ 
be the family of probability densities over $\RR_+ = [0, +\infty)$ with a right-hand derivative at $x=0$. For a probability density $m \in \mathcal P$, we define its first moment 
\begin{equation} \label{eq: mbf}
	\mbf(m) = \int_0^{+\infty} x m(x) dx \, ,
\end{equation}
and, 
\begin{equation} \label{eq: ebf}
	\ebf(m) = \frac{\sigma^2}{2} \frac{d}{d x} m(0) \, . 
\end{equation}
For a random element $Z$, we denote by $\mathcal L(Z)$ the law of $Z$. In particular, we consider a flow of density function $m_{t} $ of the marginal law at time $t \ge 0$ for a generic stochastic process.  Here and thereafter, to allow more flexibility in the numerical investigation, we do not assume anymore that the diffusion coefficient $\sigma$ is necessarily $1$.

\paragraph{\textbf{Controlled dynamics. } }
For each flow of densities $m = (m_t)_{t \geq 0}$ and each control $\xi = (\xi_t)_{t \geq 0}$ with $m_t  : \RR_+ \to \RR$ and $\xi_t  : \RR_+ \to \RR$ for each $t \geq 0$, we consider the following two dynamics $ (X_{t}^{m, \xi} )_{t \ge 0 }$ and $(\tilde X_{t}^{\xi})_{t \ge 0 }$ ~:
\begin{align}
	X_t^{m, \xi} 
	& = X_0 + \int_0^t b\left(X^{m, \xi}_s, m_s, \xi_s(X^{m, \xi}_s)\right) dt + \int_0^t \sigma d W_s \, ; \quad t \geq 0 \, ,
	\label{eq:controlledDyn}
	\\
	\tilde X_t^{\xi} 
	&= X_0 + \int_0^t b\left(\tilde X^{\xi}_s, \mathcal L(\tilde X^{\xi}_s), \xi_s(\tilde X^{\xi}_s)\right) dt + \int_0^t \mbf\left(\mathcal L(\tilde X^{\xi}_s)\right) d \tilde M^\xi_s + \int_0^t \sigma d W_s \, ; \quad t \geq 0 \, ,
	\label{eq:controlledDyn-meanfield}
\end{align}
where the initial value $X_{0}$ is a positive random variable, $\,W\,$ is a standard Brownian motion, and $ \tilde M^{\xi}_{t}$ is the cumulative number of defaults on or before time $t$ which occur at $\tau^{k, \xi}$, $k \ge 1$, i.e.,  
$$
	\tilde M^{\xi}_{t} \, =\, \sum_{k=1}^{\infty} {\bf 1}_{\{\tilde \tau^{k,\xi} \le t \}} \,,
	\quad \tilde \tau^{k,\xi} \, =\, \inf \{ s > \tilde \tau^{k-1,\xi} : \tilde X^{\xi}_{t-}  \le 0 \} \, , \, k \ge 1 \, ,
	\quad \tilde \tau^{0,\xi} \, =\,  0 \, .
$$ 
With (\ref{eq: mbf})-(\ref{eq: ebf}) the drift functional $b$ is defined by
$$
	b(x, m, \xi) = \xi + a (\mbf(m) -x) - {  \gamma \ebf(m) \mbf(m) } \, ,
$$
{where $\gamma \in (0,1]$ is a constant. The case $\xi=0$ and $\gamma=1$ corresponds to the uncontrolled dynamics studied in the previous section. }

Furthermore, for the expression~\eqref{eq:controlledDyn-meanfield} to make sense, we assume that $\mathcal L(\tilde X^{\xi}_s)$ has a density with respect to Lebesgue measure, and we identify it with this density.

The interpretation of the above two processes $ (X_{t}^{m, \xi} )_{t \ge 0 }$ and $(\tilde X_{t}^{\xi})_{t \ge 0 }$ is the following: $X^{m, \xi}$ describes the state of an infinitesimal player using control $\xi$ when the flow of densities of the rest of the population is given by $m$, while $\mathcal L(\tilde X^{\xi})$ describes the state of the population when each (infinitesimal) player uses the control $\xi$.
Here, we assume that each bank controls its rate of borrowing or lending to a central bank through the (feedback) control rate $\xi : \RR_+ \times \RR_+ \to \RR$ as a function of time and state. 

	Notice that the stochastic differential equation of $\tilde X^{\xi}$ is of the non-linear, McKean-Vlasov type, whereas the dynamics of $X^{m, \xi}$ is not, since $m$ is fixed.
	Moreover, due to the control $\xi$, $\E [ \tilde X^{m, \xi}_t ]$ is not necessarily constant.

\paragraph{\textbf{Objective function. } }
Following  \textsc{Carmona} et al. in~\cite{MR3325083} for a systemic risk model, we consider a running cost $f$ defined by
\begin{equation}
	f(x, m, \xi) =  \frac 12 \xi^2 - q \xi (\mbf(m) - x) + \frac\epsilon 2(\mbf(m) - x)^2 \, .
\end{equation} 
where $q > 0$ and $\epsilon > 0$ are parameters of the problem. 
Here, the parameter $q$ is interpreted as the incentive to borrowing or lending: The bank will borrow ($\xi >0$), if $x < \mbf(m)$ and lend ($\xi <0$), if $x < \mbf(m)$. Equivalently, after dividing by $q$, this parameter can be seen as a control by the financial regulator of the cost of borrowing or lending ($q$ large meaning low fees). Furthermore, the quadratic term $(\mbf(m) - x)^2$ in the running cost penalizes deviations from the average. We assume that $q^2 \leq \epsilon$, so that for a fixed $m$, $(x,\xi) \mapsto f(x,m,\xi)$ is convex. Notice, as a special case, that if $q^2 = \epsilon$, then it is simply $f(x, m, \xi) = \frac 12 \left( \xi - q (\mbf(m) - x)\right)^2$. Let $r>0$ be an instantaneous discounting rate parameter and $m_0$ be an initial probability density.

We introduce the following objective function (or cost functional) for each infinitesimal player with state process $(X^{m, \xi}_{t})_{t\ge 0}$ in (\ref{eq:controlledDyn}). For a density flow $m = (m_t)_{t\geq 0}$ and an admissible control $\xi = (\xi_t)_{t\geq 0}$, we let the objective cost function  be~: 
\begin{equation}\label{def:def-J-MFG}
	J^{m}(\xi) = \E\left[ \int_{0}^{\tau^{m,\xi}} e^{-r s} f(X_s^{m,\xi}, m_s, \xi_s(X_s^{m,\xi})) ds \, \right], 
\end{equation}
where $X^{m,\xi}$ is given by~\eqref{eq:controlledDyn}, and $\tau^{m,\xi}$ denotes the first time $X^{m,\xi}$ hits $0$~:
\begin{equation} \label{eq: tau m xi}
	\tau^{m,\xi} \, =\, \inf \{ s > 0 : X^{m,\xi}_{s-}  \le 0 \} \, .
\end{equation}
This is the cost that a representative (and infinitesimal) player with dynamics~\eqref{eq:controlledDyn} tries to minimize, when the dynamics of the population is described by $m$.

\paragraph{\textbf{Mean field game. }} The mean field game we consider is defined as the problem of finding $(\hat m, \hat \xi)$ such that the following two conditions are satisfied:
\begin{enumerate}
	\item $\hat \xi$ minimizes $J^{\hat m}$;
	\item $\hat m_t = \Law( \tilde X^{\hat \xi}_t)$, for all $t \geq 0$ .
\end{enumerate}
The first condition means that $\hat \xi$ is the best response control of an infinitesimal player facing the population whose distribution is given by $\hat m$. The second condition ensures consistency of the best response and the population's behavior. Overall, the mean field game can be construed as a fixed point problem.

We will also be interested in a similar game but with finite time horizon. Although this truncation might be a bit artificial, it is more convenient in order to solve numerically the problem. We thus fix a time horizon $0 < T < +\infty$ and consider, instead of~\eqref{def:def-J-MFG}, the following objective function
\begin{equation}\label{def:def-J-MFG-T}
	J^{m}_T(\xi) = \E\left[ \int_{0}^{\tau^{m,\xi} \wedge T} e^{-r s} f(X_s^{m,\xi}, m_s, \xi_s) ds \, \right] \, .
\end{equation}
We expect that for each $\xi = (\xi_t)_{t \geq 0}$, $J^{m}_T(\xi) \to J^{m}(\xi)$ as $T \to \infty$. As above, the solution to the mean field game on a finite time horizon is defined as a fixed point.

 We leave for future work the challenging questions of existence and uniqueness of a solution in the general setting. We will however provide below an explicit solution in a special case and then we compute an (approximate) solution for a discrete version of the problem in the general case.

\subsection{PDE system for the Mean Field Game}

We now want to characterize the MFG solutions in the form of a PDE system.
Following the approach of \textsc{Lasry and Lions}~\cite{MR2295621}, we obtain the system consisting of a forward Fokker-Plank (FP) equation for the evolution of the density of the population, and a backward Hamilton-Jacobi-Bellman (HJB) equation for the evolution of the value function of an infinitesimal player.

We define the Hamiltonian: for $x \in \RR_+$, $m \in \mathcal{P}$ and $p \in \RR$,
\begin{align}
\label{eq:def-H}
	H(x, m, p) 
	&= - \inf_{\xi} \left\{ b(x, m, \xi) p + f(x, m, \xi)\right\}
\end{align}
The infimum above is achieved by $\xi = \Xi(x,m,p) = -p - q (x - \mbf(m))$.
Hence
\begin{align*}
	H(x, m, p) 
	&= \frac{1}{2} p^2 - \big[(q+a)(\mbf(m)-x) - {\gamma \ebf(m) \mbf(m) } \big] p  + \frac{1}{2} (q^2-\epsilon) (\mbf(m)-x)^2 \, .
\end{align*}
To alleviate the notations, let us also introduce:
\begin{align*}
	\varphi(x, m) & = (q+a)(\mbf(m)-x) - { \gamma \ebf(m) \mbf(m) } \, , 
	\\
	\psi(x, m) & = \frac{1}{2} \left[  \varphi(x, m)^2 - (q^2-\epsilon) (\mbf(m)-x)^2\right],
\end{align*}
where $x \in \RR_+$ and $m$ is a density over $\RR_+$.
Then, the Hamiltonian can be written as
\begin{align}
\label{eq:H-quadratic}
	H(x, m, p) 
	& = \frac{1}{2} \left[ p - \varphi(x, m) \right]^2 - \psi(x,m) \, .
\end{align}

It can be shown that if $\hat \xi = (\hat \xi_t)_{t\ge 0 }$ is an optimal feedback control, then it must satisfy, for all $(t,x)$,
\begin{equation}
\label{eq:opt-ctrl-xi-u}
	\hat \xi(t,x) = \Xi(x, \hat m(t,\cdot), \partial_x \hat u(t,x) ) = - \partial_x \hat u(t,x),
\end{equation}
where $(\hat u, \hat m)$ solve the following HJB-FP PDE system~: for $(t,x) \in (0,+\infty) \times (0,+\infty)$,
\begin{subequations}
     \begin{empheq}[left=\empheqlbrace]{align}
	&\displaystyle r u(t,x) - \partial_t u(t,x) - \frac{\sigma^2}{2} \partial_{xx} u(t,x) + H(x, m(t,\cdot), \partial_x u(t,x)) = 0,
	\label{eq:sysH-HJB}
	\\
	 &\displaystyle \partial_t m(t,x) - \frac{\sigma^2}{2} \partial_{xx} m(t,x)
	 - \partial_x \big( H_p(x, m(t, \cdot), \partial_x u(t,x)) m(t,x)\big) 
	  = \ebf(m(t, \cdot)) \delta_{\mbf(m(t, \cdot))}(x),
	 \label{eq:sysH-FP} 
\end{empheq}
\end{subequations}
with the boundary conditions~: for all $t \in (0,+\infty)$,
 \b*
	 u(t,0) = 0 \qquad \mbox{and} \qquad  m(t,0)=0,
 \e*
and the initial and final conditions~: for all $x \in [0,+\infty)$,
 \begin{equation}
 \label{eq:hjbfp-init-fin}
	 m(0,x) = m_0(x) \, \qquad u(T,x) = 0 \, .
 \end{equation}

\subsection{Explicit solution for a stationary Mean Field Game} \label{sec: 3.3}

In this section, we focus on a stationary regime. Building on our explicit solution for the dynamics (see Theorem~\ref{thm:characterize-statio-p-e0}), we provide an example of stationary mean field game with an explicit solution.
This is useful for instance as a benchmark in order to test numerical methods. 
To this end, we slightly modify the problem and add a non-trivial boundary condition on $u$ at $x=0$, which can be interpreted as an exit cost (or benefit) when the bank defaults. Instead of~\eqref{def:def-J-MFG-T}, we consider the objective function
\begin{equation}\label{def:def-J-MFG-T-exitcost}
	J^{m}_T(\xi) = \E\left[ \int_{0}^{\tau^{m,\xi} \wedge T} e^{-r s} f(X_s^{m,\xi}, m_s, \xi_s) ds + e^{-r \tau^{m,\xi}} \mathbf{1}_{\{\tau^{m,\xi} < + \infty\}} \Gamma(m(\tau^{m,\xi},\cdot))\, \right] \, ,
\end{equation}
for some function $\Gamma: [0, \infty) \times \mathcal P \to \mathbb R$ to be chosen below. Here, $\tau^{m, \xi}$ is the exit time defined in (\ref{eq: tau m xi}). In this case, the boundary condition for $u$ at $x=0$ becomes: for all $t \in (0,T)$
$$
	u(t,0) = \Gamma(m(t,\cdot)).
$$

We then look for a stationary solution.
 This leads us to consider the PDE system for the unknowns $(u,m)$, functions of $x$ only~: for $x \in (0,+\infty)$,
\begin{subequations}
     \begin{empheq}[left=\empheqlbrace]{align}
	%
	&r u(x) - \frac{\sigma^2}{2} u''(x) + H(x, m, u'(x)) = 0,
	\label{eq:sysH-HJB-statio}
	\\
	 &- \frac{\sigma^2}{2} m''(x)
	 - \partial_x \big( H_p(x, m, u'(x)) m(x)\big) 
	  = \ebf(m) \delta_{\mbf(m)}(x),
	\label{eq:sysH-FP-statio}
\end{empheq}
\end{subequations}
with the boundary conditions~:
 \b*
	 u(0) = \Gamma(m) \qquad \mbox{and} \qquad  m(0)=0 \, .
 \e*

Let us look for $u$ in the form:
\begin{equation}
\label{eq:ansatz-u-ABC}
	u(x) = \frac{1}{2} A(x - \mbf(m))^2 + B(x - \mbf(m)) + C
\end{equation}
where $A,B,C$ are three real numbers to be determined. With this ansatz, we have
\begin{align*}
	H(x, m, u'(x))
	&=
	\left[\frac{1}{2} \left( q^2 - \epsilon \right)  + (q+a)A + \frac{1}{2} A^2 \right] (x - \mbf(m))^2 
	\\
	&\qquad+ \left[ (q+a)B + AB + A { \gamma  \ebf(m)\mbf(m) } \right] (x - \mbf(m)) 
	\\
	&\qquad
	+ \left[ \frac{1}{2} B^2 + B { \gamma \ebf(m)\mbf(m) } \right],
\end{align*}
so, rewriting the HJB equation as a polynomial expression in $(x - \mbf(m))$ and identifying each coefficient to $0$ yields 
\begin{subequations}
     \begin{empheq}[left=\empheqlbrace]{align}
      	A &= \frac{ -(r+2(q+a)) + \sqrt{\Delta } }{ 2 }, \qquad \Delta  = [r + 2 (q+a)]^2- 4 (q^2 - \epsilon),
	\label{eq:sol-A-discount}
	\\
	B &= \frac{-A}{q+a+A+r} {\gamma  \ebf(m)\mbf(m) }
	\\
	C &= \frac{1}{r} \left( \frac{\sigma^2}{2} A -  \frac{1}{2}B^2 - B {\gamma \ebf(m)\mbf(m) } \right)
	\\
	\Gamma(m) &= \frac{1}{2} A \mbf(m)^2 - B \mbf(m) + C \, .
\end{empheq}
\end{subequations}
The last equality above comes from the boundary condition for $u$ at $x=0$. It can be ensured by suitably choosing $\Gamma$, provided we first find $A,B,C$ satisfying the three first equations. 
Assume that { $\gamma$ satisfies }
\begin{equation}
	\label{eq:MFG-FP-B-gamma}
	{ - B - \gamma = -1 } 
\end{equation}
{ i.e., $\gamma  = 1-A/(q+a+A+r)$. }

 Then (\ref{eq:sysH-FP-statio}) 
 rewrites
\begin{equation}
\label{eq:MFG-FPstatio-Q0}
	-\frac{\sigma^2}{2} m''(x) - (A+q+a) m(x)
	 - \Big[ (A+q+a) (x - \mbf(m)) + \ebf(m)\mbf(m)   \Big] m'(x)  = \ebf(m) \delta_{\mbf(m)}(x) \, ,
\end{equation}
which is known to have a closed form solution from Theorem~\ref{thm:characterize-statio-p-e0} on the uncontrolled stationary FP equation.

Note that, due to~\eqref{eq:MFG-FP-B-gamma}, $\gamma$ can not be $1$ unless $B = A = 0$, which leads to an explicit but trivial solution in the sense that the optimal control is simply $0$ in this case. However, for $\gamma \in (0,1)$, we obtain a non-trivial explicit stationary solution for which the equilibrium stationary control is given according to~\eqref{eq:opt-ctrl-xi-u} by
$$
	\hat{\xi}(x) = - \partial_x u(x) = - A (x - \mbf(m)) - B.
$$

\begin{figure}
\centering
\begin{tabular}{cc}
\includegraphics[scale=0.3]{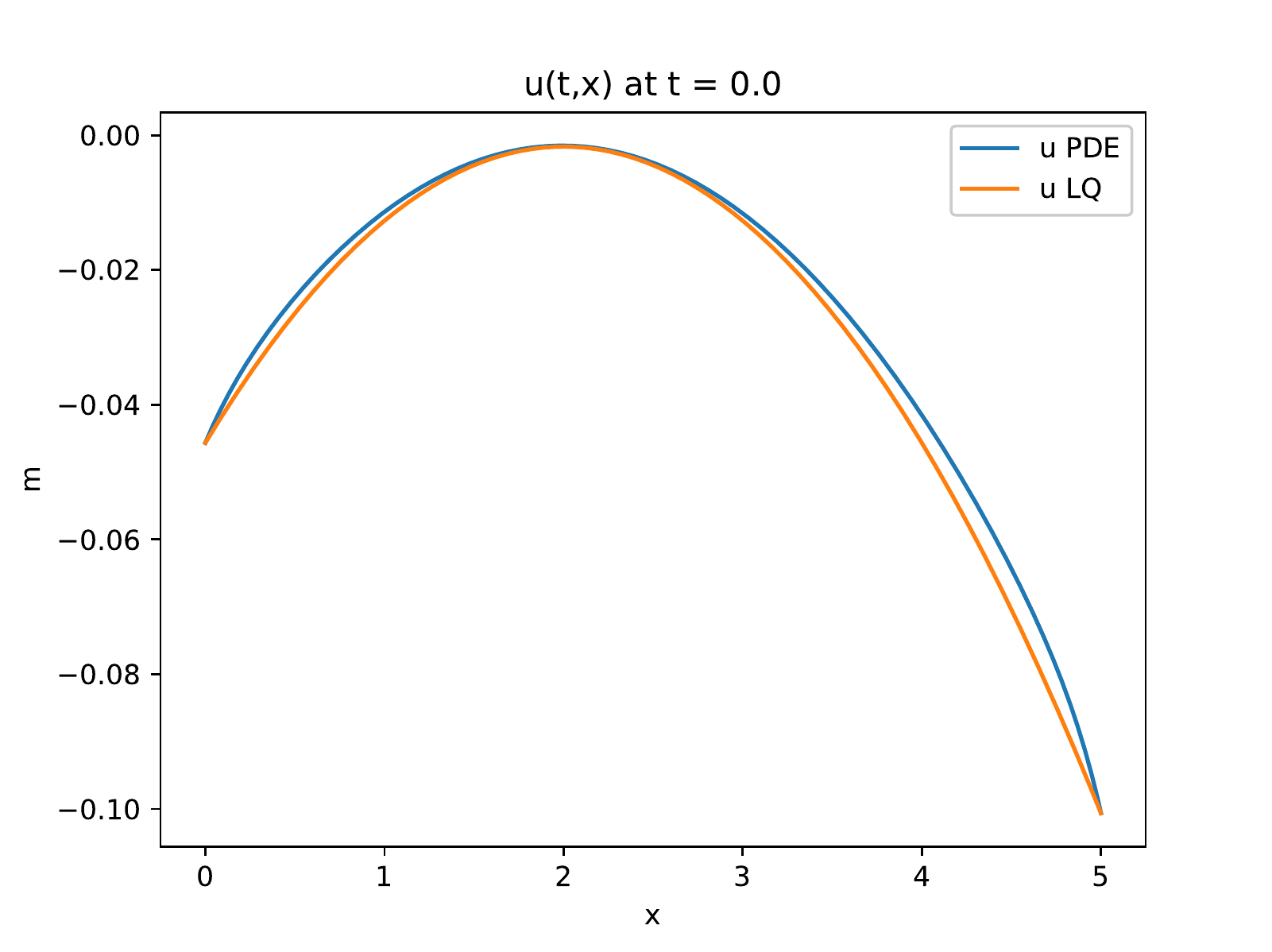} & 
  \includegraphics[scale=0.3]{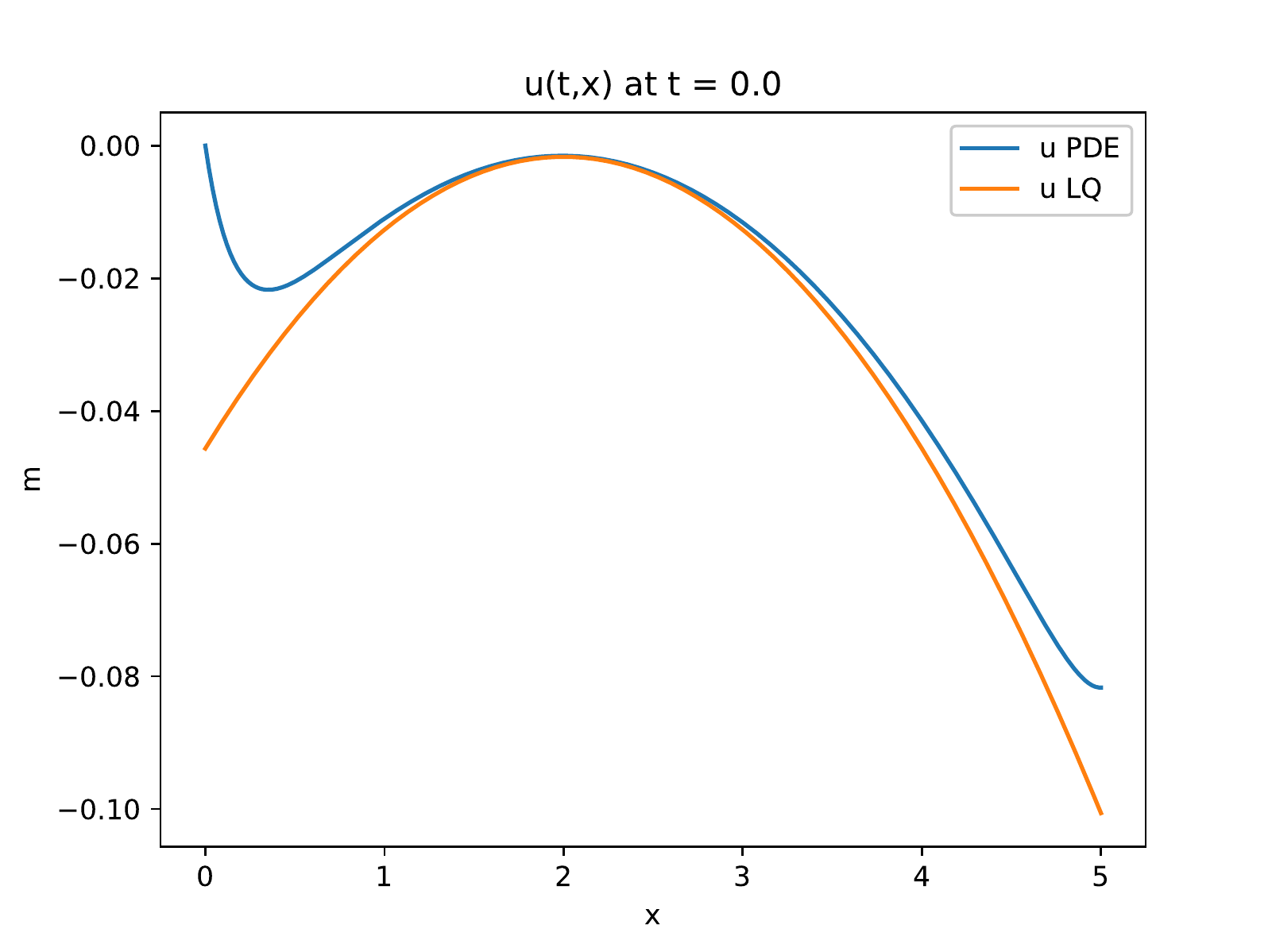} \\
%
{$t = 0$} & {$t = 0$}
\\
%
  \includegraphics[scale=0.3]{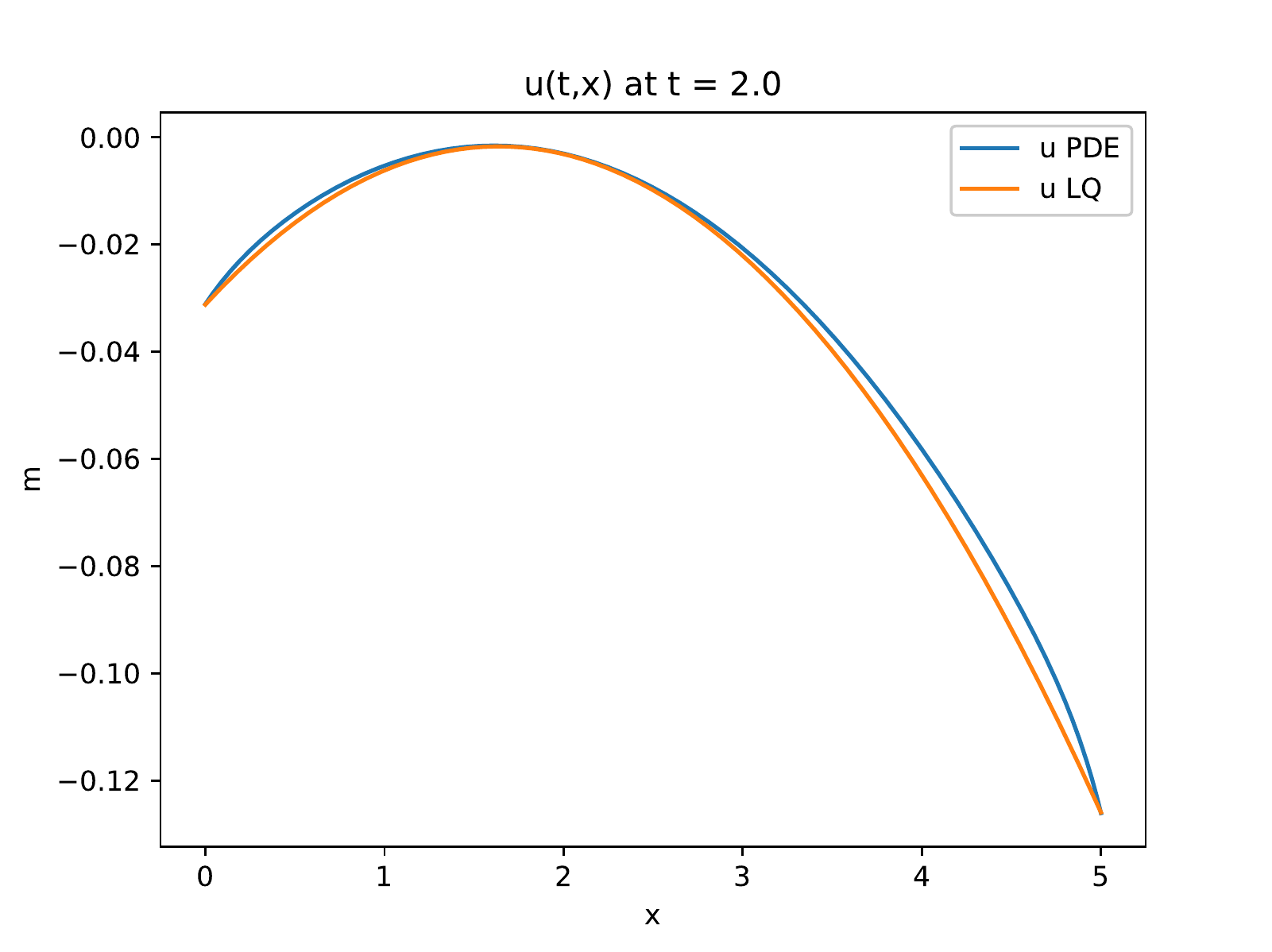} &
  \includegraphics[scale=0.3]{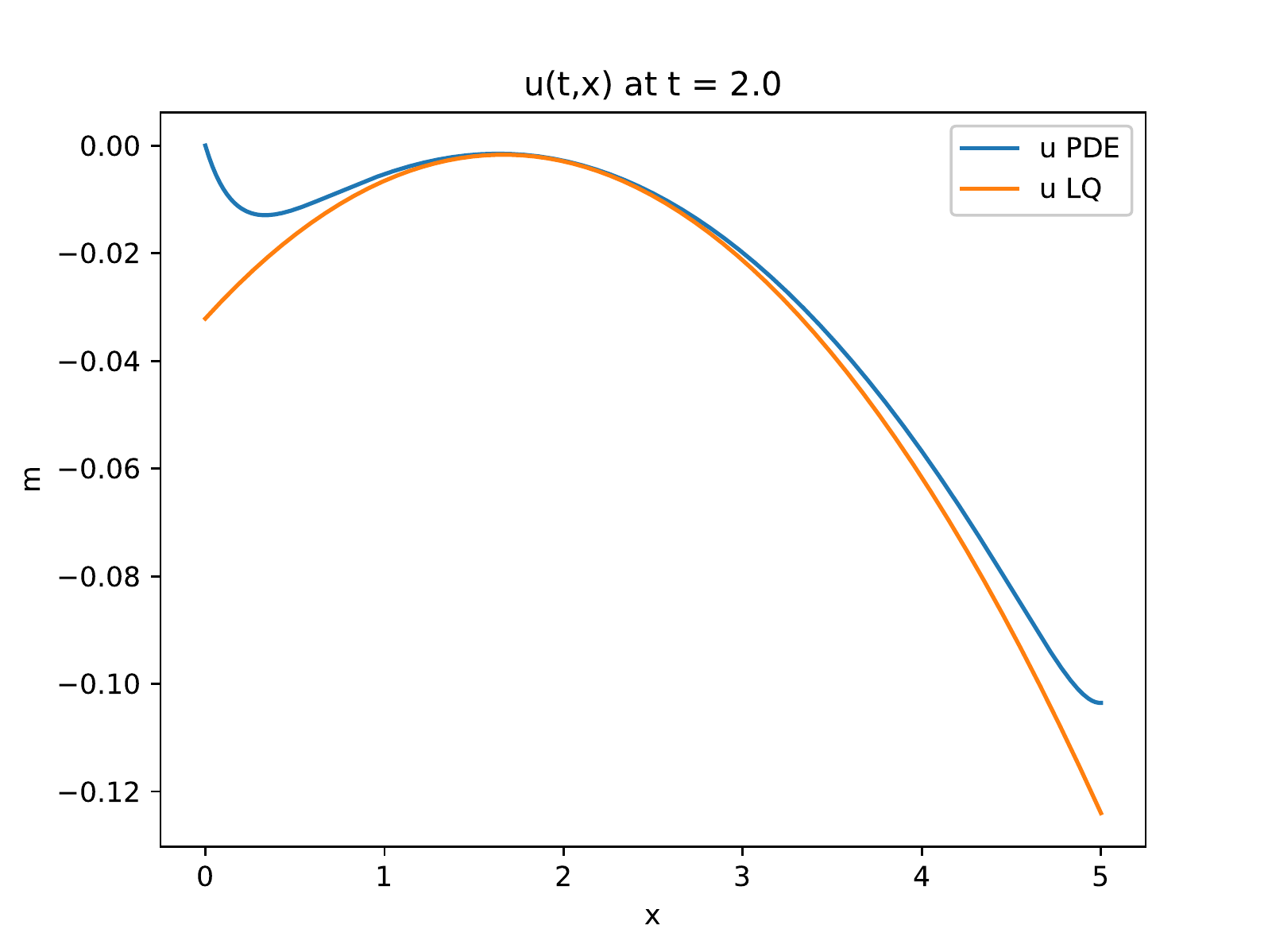} \\
%
{$t = 2$} & {$t = 2$} \\
& \\
%
  \includegraphics[scale=0.3]{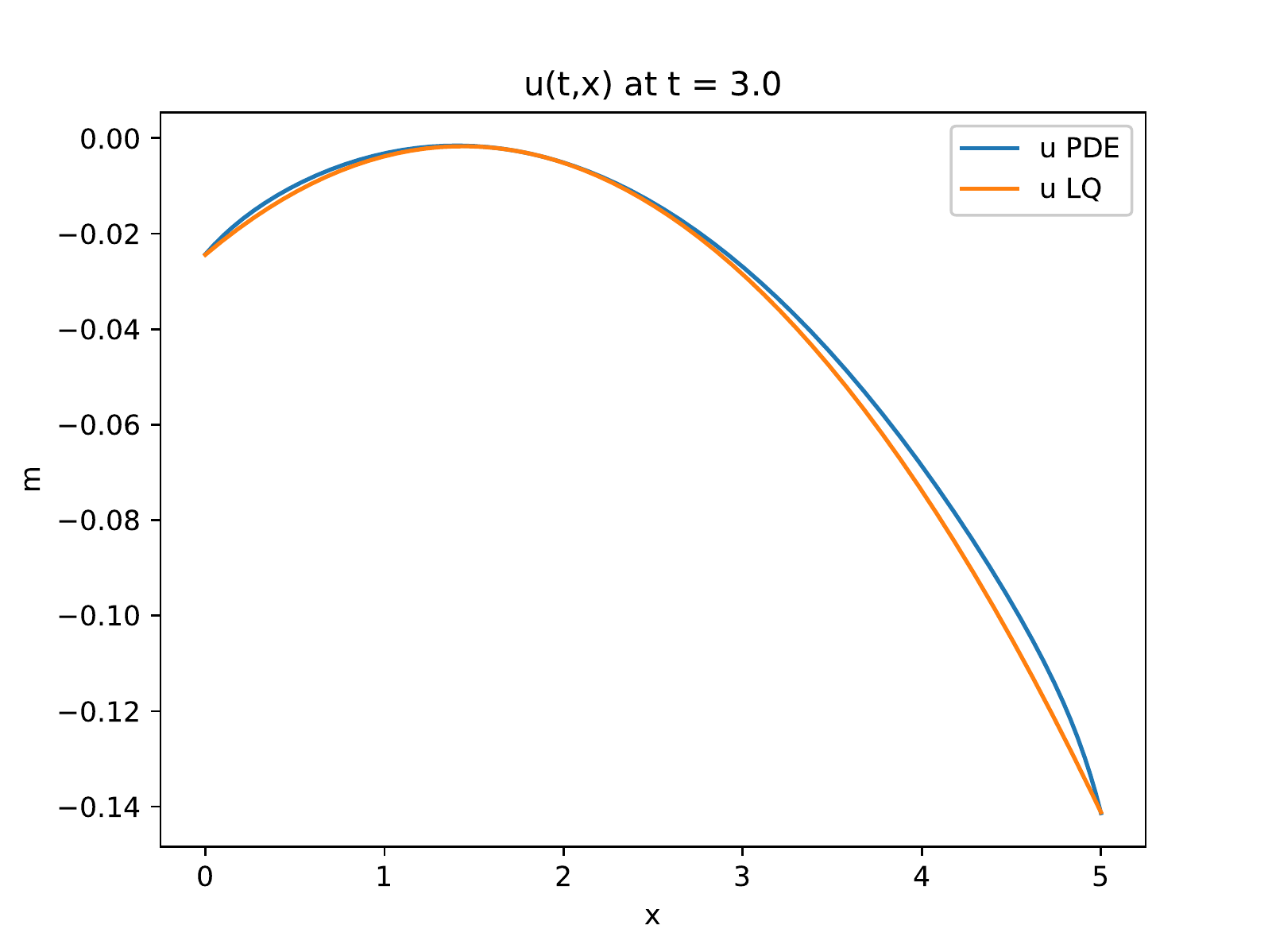} & 
  \includegraphics[scale=0.3]{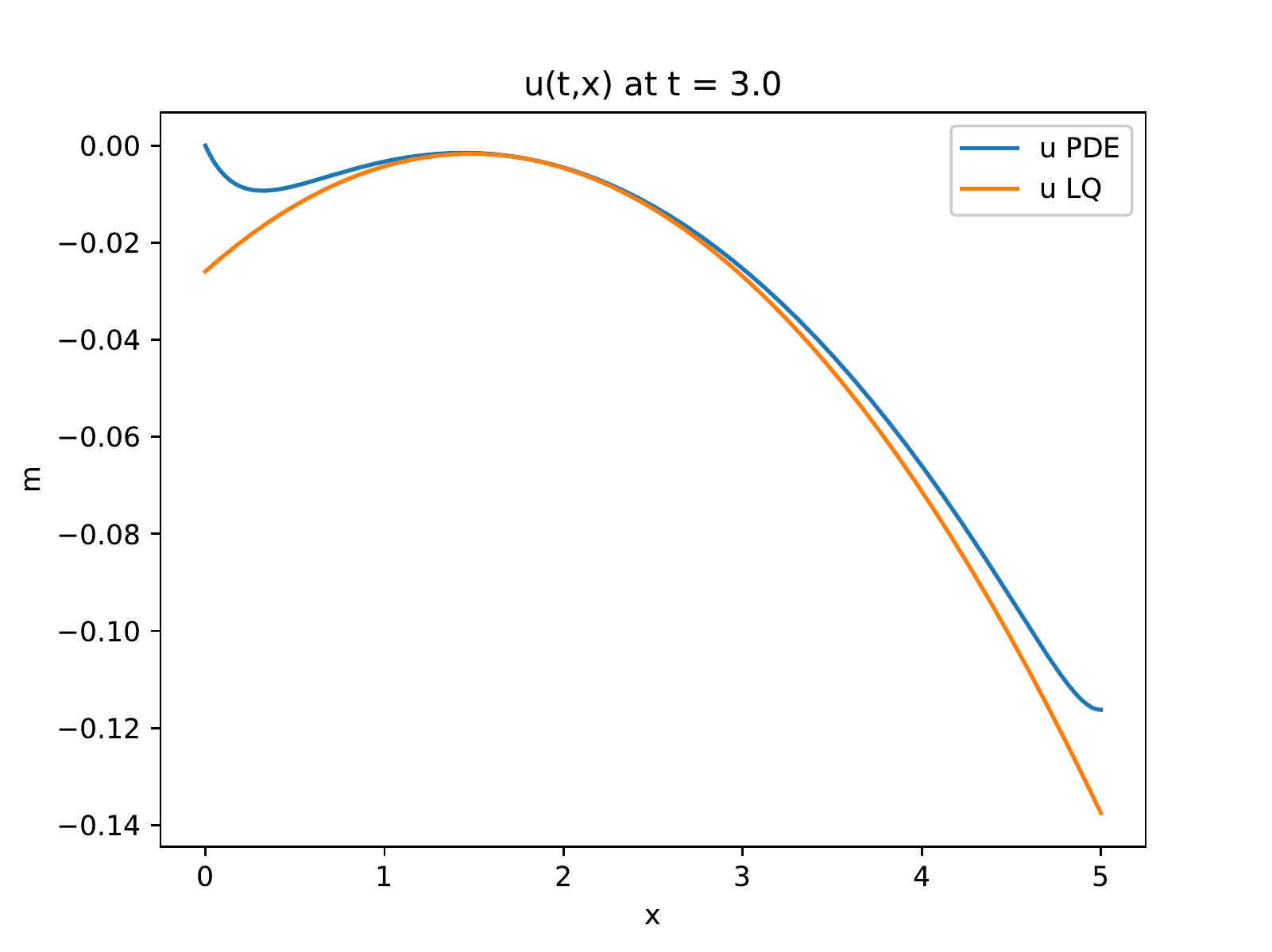} \\
%
{$t = 3$}  & {$t = 3$}  \\
&  \\
%
  \includegraphics[scale=0.3]{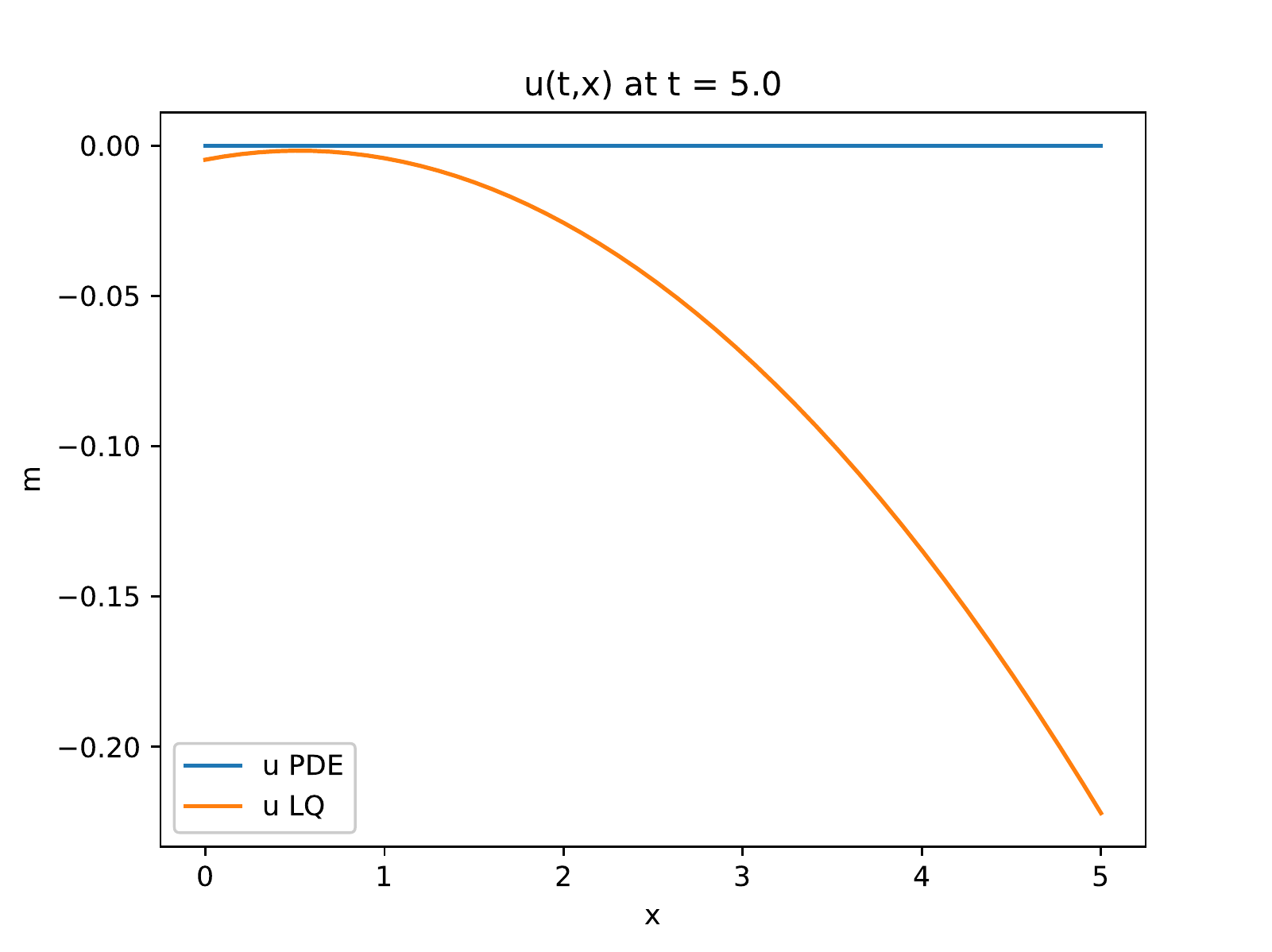} & 
  \includegraphics[scale=0.3]{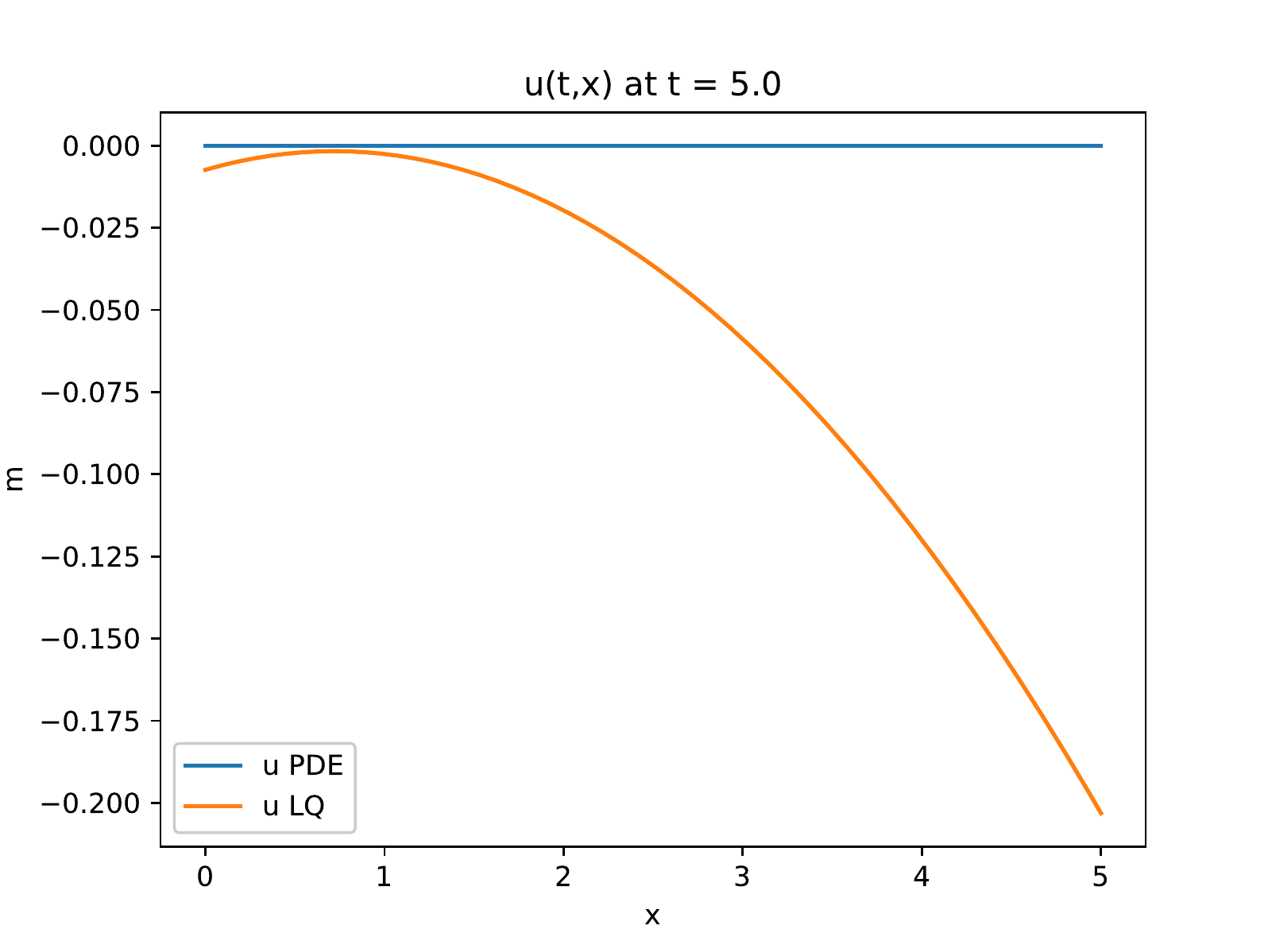} \\
{$t = 5$} & {$t = 5$}\\
\end{tabular} 
\caption{Comparison of time dependent MFG solution with quadratic ansatz. Left: Dirichlet conditions at $x = 0$ and $x=L$ in accordance with the quadratic ansatz. Right: Dirichlet condition $0$ at $x=0$ and Neumann condition $0$ at $x=L$. Here the time horizon is $T=5.0$. }
\label{fig:cmp-mfg-lqansatz}
\end{figure}

\subsection{Numerical approximation scheme for the general Mean Field Game} \label{sec: 3.4} 

We will use finite differences to discretize the HJB-FP PDE system~\eqref{eq:sysH-HJB}--\eqref{eq:sysH-FP}, adapting to our setting the scheme introduced in~\cite{MR2679575}. The main difference is that, in the FP equation, we need to deal with the defaults and with the creation of new banks at a value which is not known in advance. For numerical purposes, it will be interesting to consider the PDE system on a truncated domain. 
Let $L>0$ be a given constant, and let us denote by $D=(0,L)$ the spatial domain and by $Q_T = (0,T) \times D$ the time-space domain. We will consider the PDEs~\eqref{eq:sysH-HJB}--\eqref{eq:sysH-FP} on $Q_T$, and impose the following boundary conditions~: 
for all $t \in (0,T)$,
 \begin{equation}
	 u(t,0) = u(t,L) = 0 \, , \qquad \partial_x u(t,L) = 0 \, ,
\end{equation}
and	 
\begin{equation}
	m(t,0) = u(t,L) = 0 \, , \qquad \partial_x m(t,L) = 0 \, ,
 \end{equation}
and the initial and final conditions~: for all $x \in D$,~\eqref{eq:hjbfp-init-fin} holds.

\paragraph{\textbf{Discretization.} }
Let $N_T$ and $N_h$ be two positive integers corresponding respectively to the number of steps in time and space. We consider $(N_T+1)$ and $(N_h+1)$ points in  time and space respectively. Let $\Delta t = T/N_T$ and $h = L/N_h$, and $t_n = n\Delta t, x_i = i \,h$ for $(n,i) \in \{0,\dots,N_T\}\times\{0,\dots,N_h\}$. We also introduce an extended index function (which implicitly depend on the spatial grid) defined by: for $x \in D$, 
$$
	\ind(x) = \max\{i \, : \, x_i \leq x\}  \,  .
$$
Note that for all $x \in D$, we have $x \in [x_{\ind(\xi)}, x_{\ind(\xi)+1})$. 

We approximate $u^{(k)}$ and $m^{(k)}$ respectively by vectors $U^{(k)}$ and $M^{(k)} \in \RR^{(N_T+1)\times(N_h+1)}$, such that $u^{(k)}(t_n,x_i) \approx U^{(k),n}_{i}$ and $m^{(k)}(t_n,x_i) \approx M^{(k),n}_{i}$ for each $(n,i)$. 
For $M \in \RR^{N_h+1}$, we let 
$$
	\mbf(M) = h \sum_i x_i M_i \, ,\qquad \ebf(M) = \frac{\sigma^2}{2 h}\left( M_1 - M_0 \right).
$$
We introduce the finite difference operators
\begin{align*}
	(D_t W)^n &= \frac{1}{\Delta t}(W^{n+1} - W^n), \qquad n \in \{0, \dots N_T-1\}, \qquad W \in \RR^{N_T+1},
	\\
	(D^+ W)_i &= \frac{1}{h} (W_{i+1} - W_{i}), \qquad i  \in \{0, \dots N_h-1\}, \qquad W \in \RR^{N_h+1},
	\\
	(\Delta_h W)_i &= -\frac{1}{h^2} \left(2 W_i - W_{i+1} - W_{i-1}\right), \qquad i \in \{ 1, \dots N_h-1\}, \qquad W \in \RR^{N_h+1},
	\\
	[\grad_h W]_i &= \left( (D^+ W)_{i}, (D^+ W)_{i-1} \right)^T, \qquad i \in \{ 0, \dots N_h-1\}, \qquad W \in \RR^{N_h+1}.
\end{align*}

\paragraph{\textbf{Discrete Hamiltonian.} }
We first introduce $\tilde\varphi$ and $\tilde\psi$, defined for $(x,M) \in \RR \times \RR^{N_h+1}$ by:
\begin{align*}
	\tilde \varphi(x,M) & = (q+a)(\mbf(M)-x) - \ebf(M) \mbf(M) \, , 
	\\
	\tilde \psi(x,M) & = \frac{1}{2} \left[  \tilde\varphi(x,M)^2 - (q^2-\epsilon) (\mbf(M)-x)^2\right].
\end{align*}
We then introduce the following discrete Hamiltonian $\tilde H$, whose definition is based on~\eqref{eq:H-quadratic},
\begin{equation}
	\label{eq:defHdiscrete}
	\tilde H(x,M,p_1,p_2) = 
	\frac{1}{2} \left\{ \left[ (p_1 - \tilde\varphi(x, M))^- \right]^2 + \left[ (p_2 - \tilde\varphi(x, M))^+ \right]^2 \right\}
	- \tilde\psi(x,M) \, ,
\end{equation}
where $x \in D$, $p_1, p_2 \in \RR$, $M \in \RR^{N_h+1}$. 
In particular $\tilde H$ has the following properties:
\begin{enumerate}
	\item Monotonicity: $\tilde H$ is nonincreasing in $p_1$ and nondecreasing in $p_2$.
	\item Consistency: $\tilde H(x,M,p,p) = H(x,M,p)$ (where, in the right-hand side, $M$ is identified with the corresponding piecewise linear function defined by its values on the spatial grid) .
	\item Differentiability: $\tilde H$ is of class $\mathcal C^1$ with respect to $(x, p_1, p_2)$.
	\item Convexity:  $(p_1,p_2) \mapsto \tilde H(x,M,p_1,p_2)$ is convex. 
\end{enumerate}

\paragraph{\textbf{Discrete HJB equation.} }
As in~\cite{MR2679575}, we consider the following discrete version of~\eqref{eq:sysH-HJB}
\begin{subequations}
\label{eq:sysH-HJB-discrete}
\begin{empheq}[left=\empheqlbrace]{align}
	&r U_{i}^n - (D_t U_{i})^{n} - \tfrac{\sigma^2}{2} (\Delta_h U^{n})_{i}
	+ \tilde H(x_i, M^{n+1}, [\grad_h U^n]_i) 
	= 0 \, , 
	\label{eq:sysH-HJB-scheme-discrete-edo}
	\\ &\qquad\qquad\qquad\qquad\qquad\qquad\qquad\qquad\qquad   i \in \{1,\dots,N_h-2\} \, , \,  n \in \{0, \dots, N_T-1\} \, ,
	\notag
	\\
	&U^{n}_{i} = 0 \, , \qquad n \in \{0, \dots,  N_T-1\} \, , \,  i \in \{ 0, N_{h}-1, N_{h} \} \, ,
	\label{eq:sysH-HJB-scheme-discrete-bc}
	\\
	&U^{N_T}_{i} = 0 \, , \qquad  i \in \{ 0, \dots , N_{h} \} \, .
	\label{eq:sysH-HJB-scheme-discrete-finalc}
\end{empheq}
\end{subequations}

\paragraph{\textbf{Discrete FP equation.} } 
To define a discretization of the FP equation, we consider the weak form of~\eqref{eq:sysH-FP}. It involves, among other terms, for a smooth $w \in \mathcal{C}^{\infty}(D \times [0,T])$,
\begin{align}
	&- \int_{D} \left[ \partial_x \big( H_p(x, m(t, \cdot), \partial_x u(t,x)) m(t,x)\big) + \ebf(m(t,\cdot)) \delta_{\mbf(m(t,\cdot))}(x) \right] w(t,x) dx
	\notag
	\\
	=\, &
	\int_{D} H_p(x, m(t, \cdot), \partial_x u(t,x)) m(t,x) \, \partial_x w(t,x) dx - \ebf(m(t,\cdot)) w\big(t,\mbf(m(t,\cdot))\big) \, ,
	\label{eq:HJB-weak}
\end{align}
where we used integration by parts and the boundary conditions.

This leads us to introduce the following two discrete operators. For the first part (see~\cite{MR2679575} for more details), we introduce
\begin{align*}
	\mathcal B_i(U, M) 
	= \,&\frac{1}{h} 
	\Big( M_i  \tilde H_{p_1}(x_i, M, [\grad_h U]_i) 
		- M_{i-1} \tilde H_{p_1}(x_{i-1}, M, [\grad_h U]_{i-1})
		\\
		&\quad +
		M_{i+1} \tilde H_{p_2}(x_{i+1}, M, [\grad_h U]_{i+1})
		- M_i \tilde H_{p_2}(x_i, M, [\grad_h U]_i)
	\Big) \, .
\end{align*}
Notice that, with our definition of $\tilde H$ (see~\eqref{eq:defHdiscrete}),
\begin{align*}
	&\tilde H_{p_1}(x_i, M, [\grad_h U]_i) = - \left( \frac{U_{i+1}-U_{i}}{h} - \tilde \varphi(x,M) \right)^-, 
	\\
	&\tilde H_{p_2}(x_{i+1}, M, [\grad_h U]_{i+1}) = \left( \frac{U_{i+1}-U_{i}}{h} - \tilde \varphi(x,M) \right)^+,
\end{align*}
and similarly for the other terms. 

For the second part of~\eqref{eq:HJB-weak}, we introduce, for $M \in \RR^{N_h+1}$ and $\mu \in D$,
$$
	\beta_i(M, \mu) = 
	\begin{cases}
	\ebf(M) (x_{i+1} - \mu)/h^2 \, , & \hbox{ if } i = \ind(\mu) \, ,
	\\
	\ebf(M) (\mu - x_{i-1})/h^2 \, , & \hbox{ if } i = \ind(\mu)+1 \, ,
	\\
	0 \, , & \hbox{ otherwise.}
	\end{cases}
$$
Considering a piecewise linear function $W$ defined on $D$ by its values $W_i = W(x_i)$, $i=0,\dots,N_h$, at the mesh points, we have
$$
	\ebf(M) W(\mu) = \sum_{i=0}^{N_h} \beta_i(M, \mu) W_i  \, .
$$

Then, for the discrete version of~\eqref{eq:sysH-FP} we consider
\begin{subequations}
\label{eq:sysH-FP-discrete}
\begin{empheq}[left=\empheqlbrace]{align}
	&(D_t M_{i})^{n} - \tfrac{\sigma^2}{2} (\Delta_h M^{n+1})_{i}
	- \mathcal B_i(U^{n}, M^{n+1})
	- \beta_i(M^{n+1}, \mbf(M^{n+1}))
	= 0 \, , 
	\label{eq:sysH-FP-scheme-discrete-edo}
	\\
	&\qquad\qquad\qquad\qquad\qquad\qquad\qquad\qquad i \in \{1,\dots,N_h-2\}, n \in \{0, \dots, N_T-1\} \, ,
	\notag
	\\
	&M^{n}_{i} = 0 \, , \qquad n \in \{ 1, \dots, N_T\} \, , \,  i \in \{0, N_{h}-1, N_{h} \} \, ,
	\label{eq:sysH-FP-scheme-discrete-bc}
	\\
	&M^{0}_{i} = m_0(x_i) \, , \,  \qquad i \in \{0, \dots, N_h\} \, .
	\label{eq:sysH-FP-scheme-discrete-initialc}
\end{empheq}
\end{subequations}

\begin{remark}
	A direct discretization of~\eqref{eq:sysH-FP} would have to deal with a Dirac mass at the point $\mbf(m(t,\cdot))$, which is not necessarily a point of the mesh. One advantage of considering the weak formulation as proposed above is to avoid this issue.
\end{remark}

\paragraph{\textbf{Numerical method.} } We now describe how to compute a solution to the above discrete system~\eqref{eq:sysH-HJB-scheme-discrete-edo}--\eqref{eq:sysH-HJB-scheme-discrete-finalc} and~\eqref{eq:sysH-FP-scheme-discrete-edo}--\eqref{eq:sysH-FP-scheme-discrete-initialc}.
Note that the discrete FP equation~\eqref{eq:sysH-FP-scheme-discrete-edo} is non-linear in $M$ (due to the term $\beta_i(M^{n+1}, \mbf(M^{n+1}))$). Trying to solve this equation using e.g. Newton's method would involve highly non-sparse matrices (due to the term $\mbf(M^{n+1})$).

To avoid this issue, we propose to employ the iterative procedure described in Algorithm~\ref{algo:iter-UM}.

\begin{algorithm}
\DontPrintSemicolon
\KwData{An initial guess $(\tilde M, \tilde U)$; a number of iterations $K$.}
\KwResult{An approximation of $(u,m)$}
\Begin{
  Initialize $(M^{(0)}, U^{(0)}) \leftarrow (\tilde M, \tilde U)$. \; 
  \For{$k = 0, 1, 2, \dots, K-1$}{
	Compute $M^{(k+1)}$ solving
		\begin{align}
                    	&(D_t M_{i})^{n} - \tfrac{\sigma^2}{2} (\Delta_h M^{n+1})_{i}
                    	- \mathcal B_i(U^{(k),n}, M^{n+1})
                    	- \beta_i(M^{(k),n+1}, \mbf(M^{(k),n+1}))
                    	= 0 \, , 
                    	\label{eq:sysH-FP-scheme-discrete-edo-fixedpoint}
                    	\\
                    	&\qquad\qquad\qquad\qquad\qquad\qquad\qquad\qquad i \in \{1,\dots,N_h-2\}, n \in \{0, \dots, N_T-1\} \, ,
                    	\notag
                    	\\
                    	&M^{n}_{i} = 0 \, , \qquad n \in \{ 1, \dots, N_T\} \, , \,  i \in \{0, N_{h}-1, N_{h} \} \, ,
                    	\label{eq:sysH-FP-scheme-discrete-bc-fixedpoint}
                    	\\
                    	&M^{0}_{i} = m_0(x_i) \, , \,  \qquad i \in \{0, \dots, N_h\} \, .
                    	\label{eq:sysH-FP-scheme-discrete-initialc-fixedpoint}
		\end{align}
		\;
		Compute $U^{(k+1)}$ solving
		\begin{align}
                    	&r U_{i}^n - (D_t U_{i})^{n} - \tfrac{\sigma^2}{2} (\Delta_h U^{n})_{i}
                    	+ \tilde H(x_i, M^{(k+1),n+1}, [\grad_h U^n]_i) 
                    	= 0 \, , 
                    	\label{eq:sysH-HJB-scheme-discrete-edo-fixedpoint}
                    	\\ &\qquad\qquad\qquad\qquad\qquad\qquad\qquad\qquad\qquad   i \in \{1,\dots,N_h-2\} \, , \,  n \in \{0, \dots, N_T-1\} \, ,
                    	\notag
                    	\\
                    	&U^{n}_{i} = 0 \, , \qquad n \in \{0, \dots,  N_T-1\} \, , \,  i \in \{ 0, N_{h}-1, N_{h} \} \, ,
                    	\label{eq:sysH-HJB-scheme-discrete-bc-fixedpoint}
                    	\\
                    	&U^{N_T}_{i} = 0 \, , \qquad  i \in \{ 0, \dots , N_{h} \} \, .
                    	\label{eq:sysH-HJB-scheme-discrete-finalc-fixedpoint}
		\end{align}
		\;
    }
  \KwRet{ $(M^{(K)}, U^{(K)})$}
  }
\caption{\footnotesize Iterative method for the finite difference system~\eqref{eq:sysH-HJB-discrete} \& \eqref{eq:sysH-FP-discrete}}
\label{algo:iter-UM}
\end{algorithm}

Notice that~\eqref{eq:sysH-FP-scheme-discrete-edo-fixedpoint}--\eqref{eq:sysH-FP-scheme-discrete-initialc-fixedpoint} is a modified version of the discrete FP equation~\eqref{eq:sysH-FP-discrete}, in which part of the unknown $M$ is replaced by the estimate $M^{(k)}$ from the previous iteration.  At convergence, we have $M^{(k+1)} = M^{(k)}$ and hence~\eqref{eq:sysH-FP-discrete} is satisfied. 
In our implementation, instead of fixing the number of iterations, we use as convergence criterion the normalized $\ell^2$-norms of the difference between two iterates of $U$ and two iterates of $M$. The discrete HJB equation~\eqref{eq:sysH-HJB-scheme-discrete-edo-fixedpoint}--\eqref{eq:sysH-HJB-scheme-discrete-finalc-fixedpoint} is solved by \textsc{Newton}'s method.

\subsection{Numerical results} \label{sec: 3.5}

We now present numerical results obtained using the numerical method described above. For the results displayed in Figure~\ref{fig:mfg-dyn-statics}, we fixed $L=10$ (so that the space domain is $[0, 10]$), $T=10$, $q=0.1$, and $\epsilon = q^2$. We consider as a baseline the setting with $a= 0.5, x_0 = 2.0, r=0.5, \sigma=1.0$. We are particularly interested in the evolution of the default rate $\ebf(m_t)$ through time. We note the following behavior: The default rate increases as the interaction strength $a$ or the initial mean $x_0$ increases; it decreases when the discount rate $r$ increases or when the volatility $\sigma$ decreases. All these variations seem quite natural from the point of view of the interpretation of the model. We note in particular that the solution starting with $x_0 = 3$ is very stable since the mean almost does not change and the default rate is very close to zero. In the other settings, the default rate tends to increase rapidly at the beginning of the time interval before taking a more stable value. This can be explained by the fact that the initial distribution is a truncated Gaussian with value $0$ at $x=0$, so that the default rate is initially null but the randomness (diffusion term) quickly causes the weakest banks to default. As new banks are re-injected in the system around the mean wealth, the overall default rate finds an (almost) stationary value. However, numerical experiments conducted with $x_0$ much smaller than $1.8$ did not converge to a solution. As hinted by Theorem~\ref{thm: Blowup}, this could be related to the fact that the solution blows up before time $T=10$ when the initial distribution is concentrated near $x=0$. 

Figure~\ref{fig:mfg-dyn-evol-m-u} displays the evolution of $m$ and $u$ in the baseline case mentioned above (namely, $a= 0.5, x_0 = 2.0, r=0.5, \sigma=1.0$). Although $m$ remains concentrated close to its original mean, it seems important from a numerical perspective to consider a large enough spatial domain. Indeed, since we artificially impose a Dirichlet boundary at $x = L$, choosing $L$ too small would imply that the total mass can not be conserved.

\begin{figure}[H]
\centering
\begin{subfigure}{.45\textwidth}
  \centering
  \includegraphics[width=\linewidth]{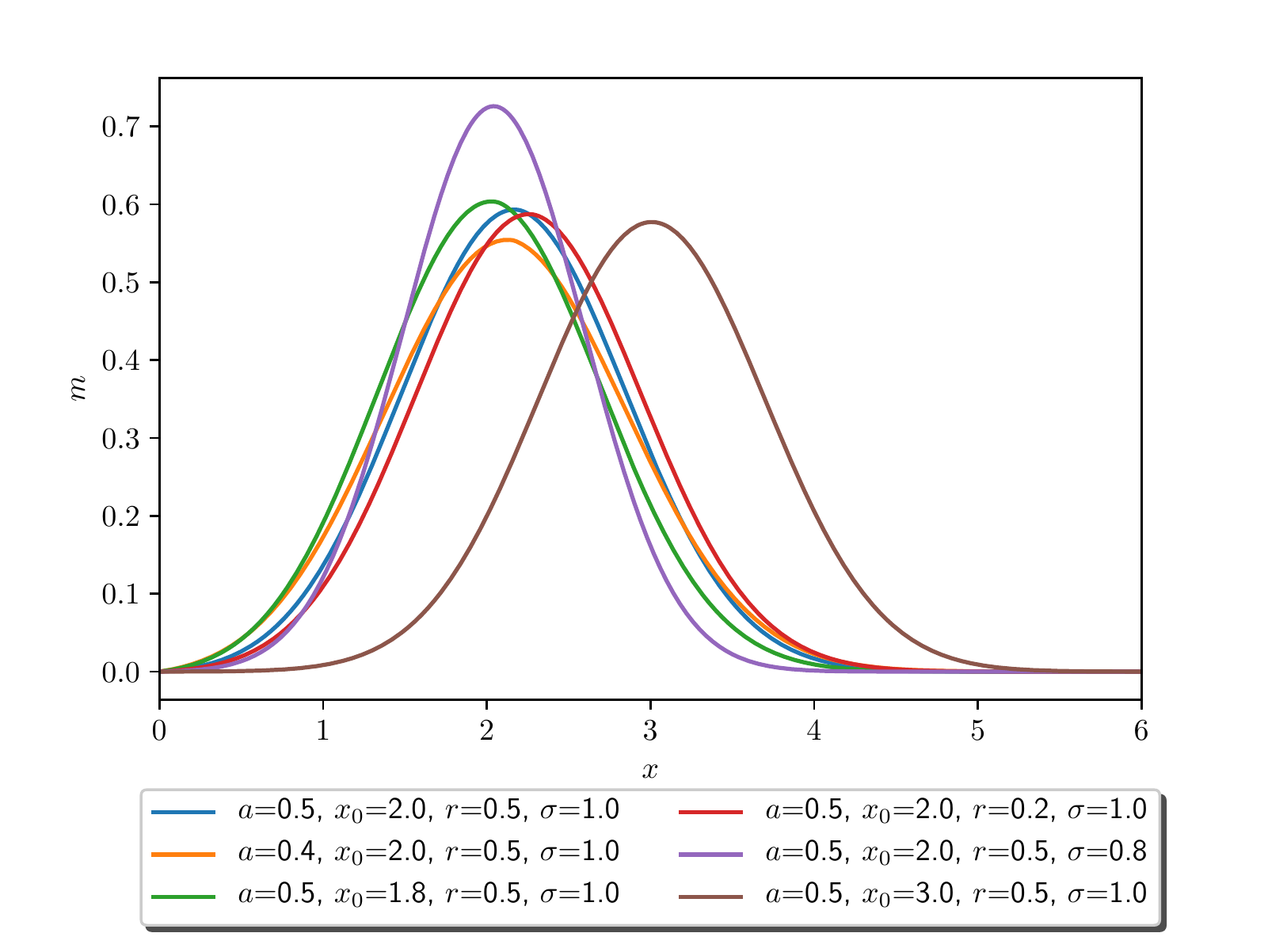}
  \caption{Final density}
  \label{fig:res-mfg-final-density}
\end{subfigure}%
\begin{subfigure}{.45\textwidth}
  \centering
  \includegraphics[width=\linewidth]{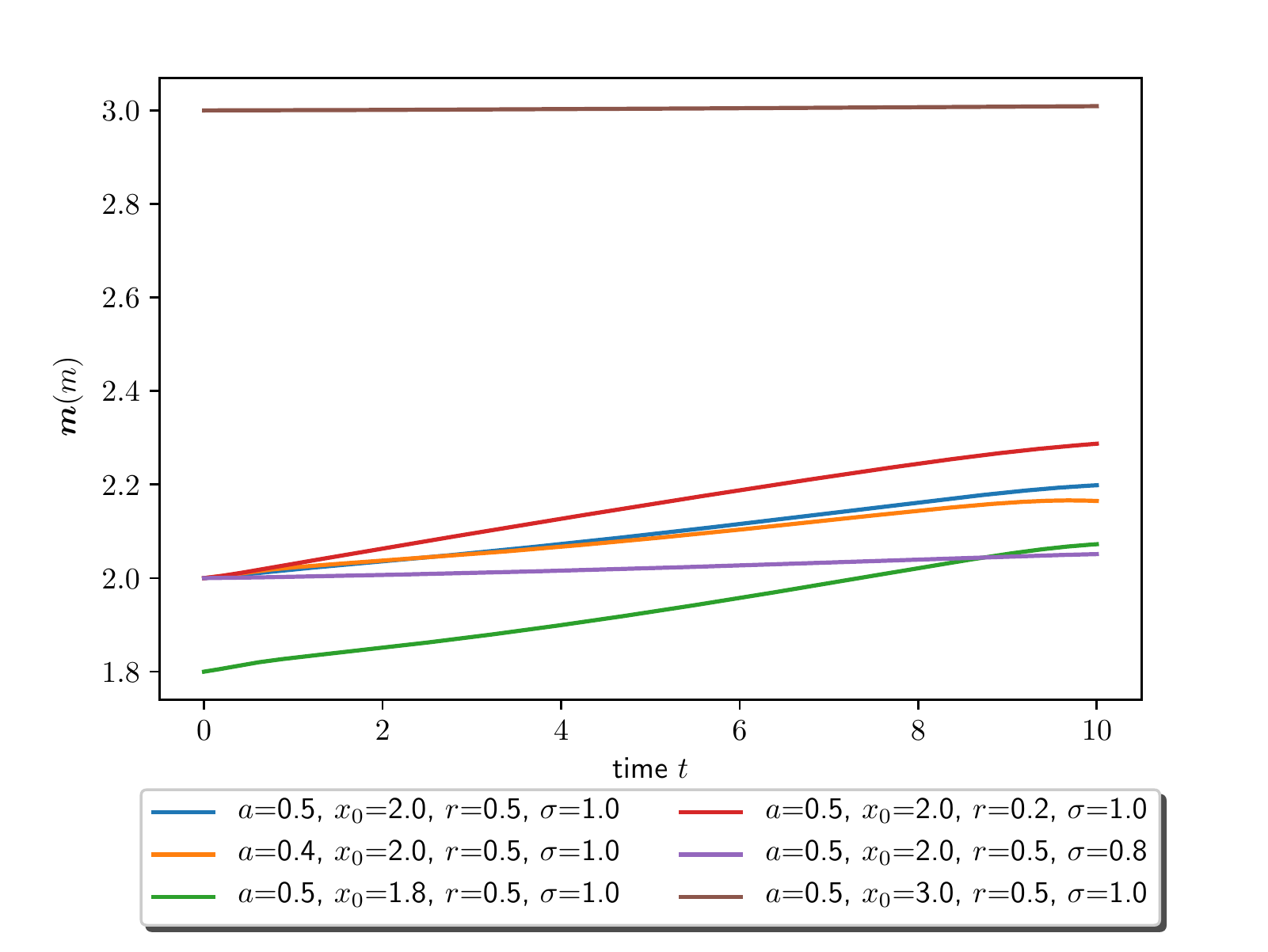}
  \caption{Mean value}
\end{subfigure}

\begin{subfigure}{.45\textwidth}
  \centering
  \includegraphics[width=\linewidth]{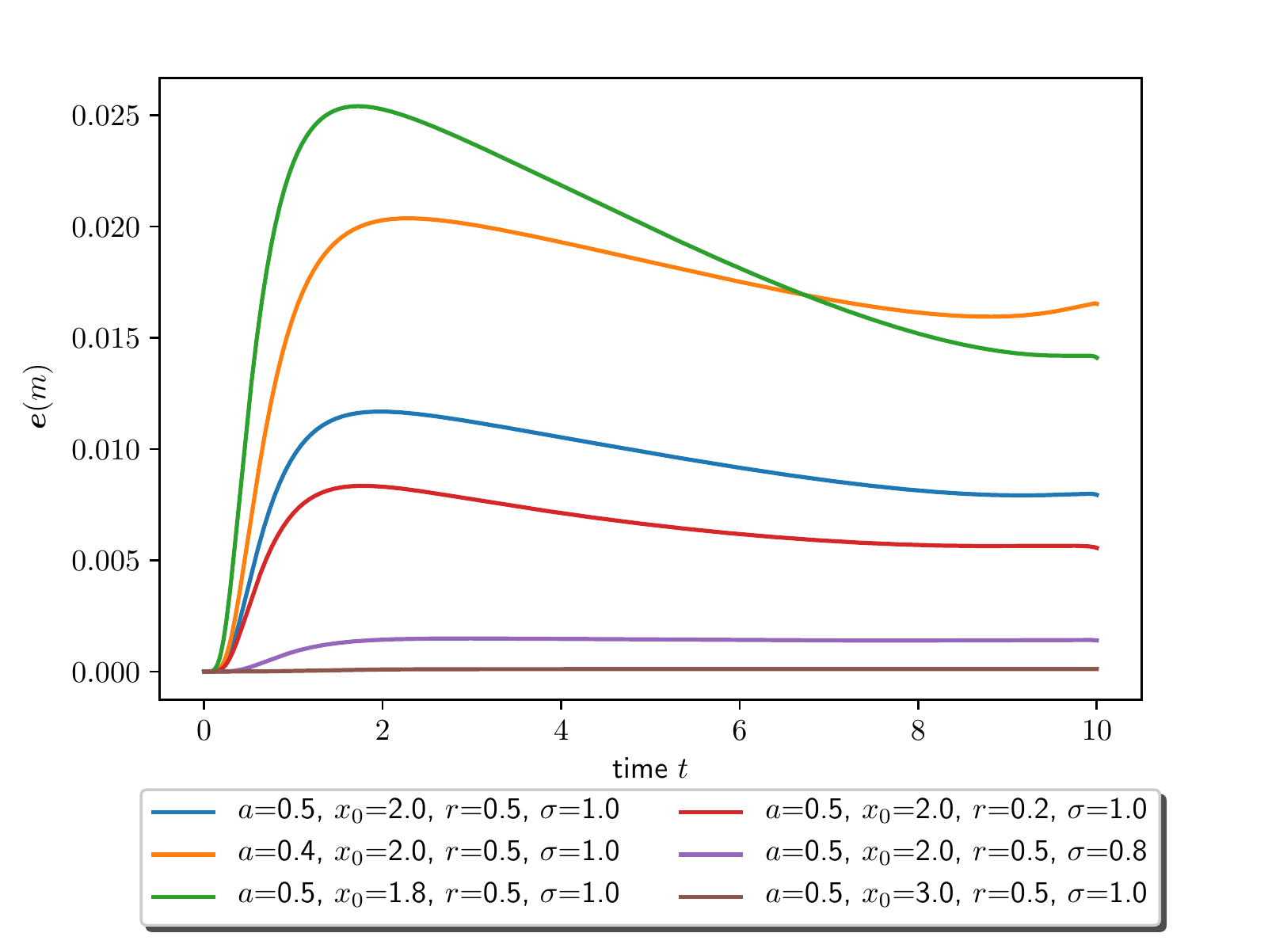}
  \caption{Default rate}
\end{subfigure}
\caption{Comparison for various settings. Here, we used $L=10$, $T=10$, $q=0.1$, $\epsilon = q^2$. The values of the parameters $a, x_0, r$ and $\sigma$ are specified in the legends. }
\label{fig:mfg-dyn-statics}
\end{figure}

\begin{figure}[H]
\centering
\begin{subfigure}{.45\textwidth}
  \centering
  \includegraphics[width=\linewidth]
  {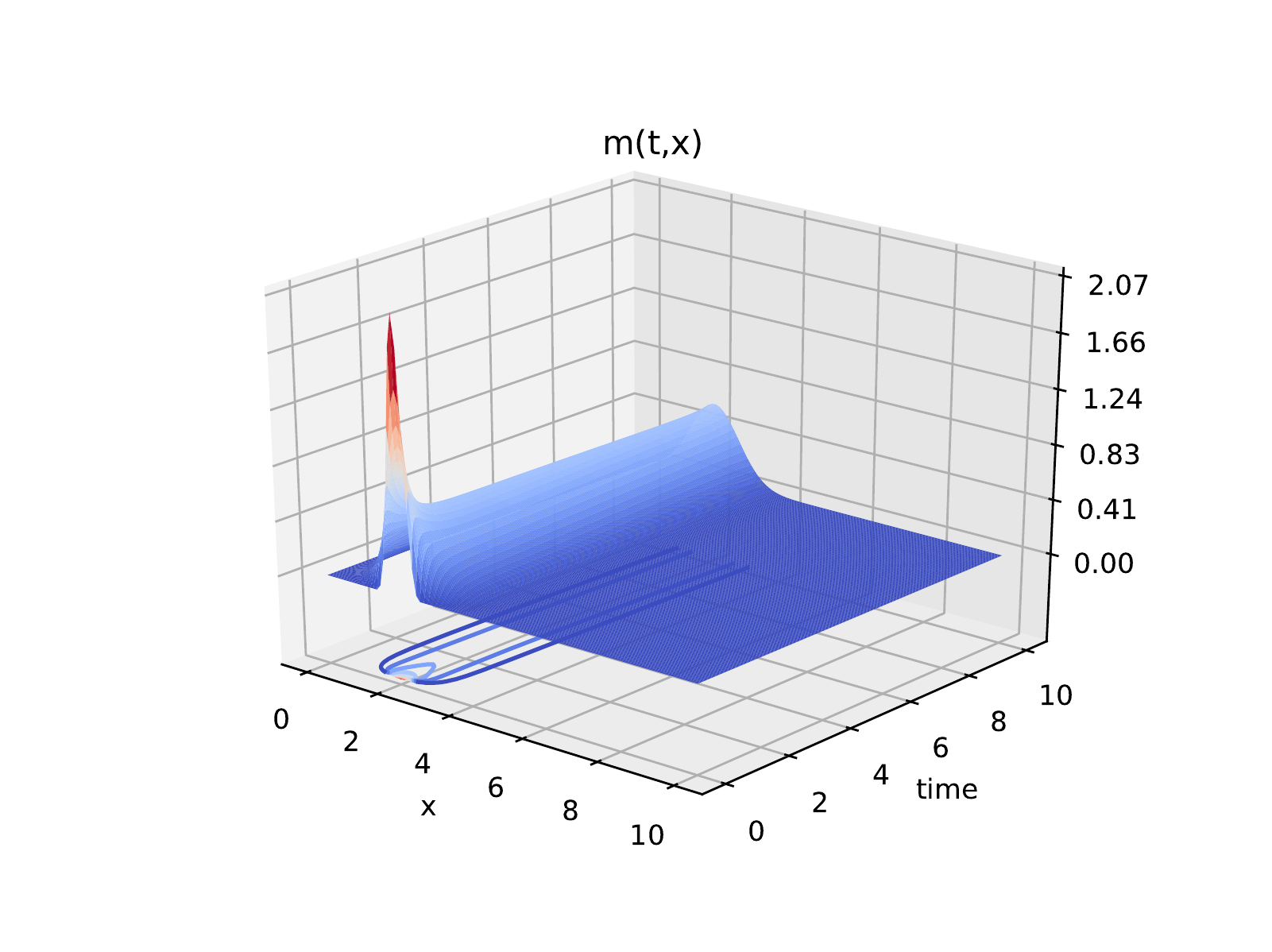}
  \caption{Evolution of $m$}
\end{subfigure}%
\begin{subfigure}{.45\textwidth}
  \centering
  \includegraphics[width=\linewidth]
  {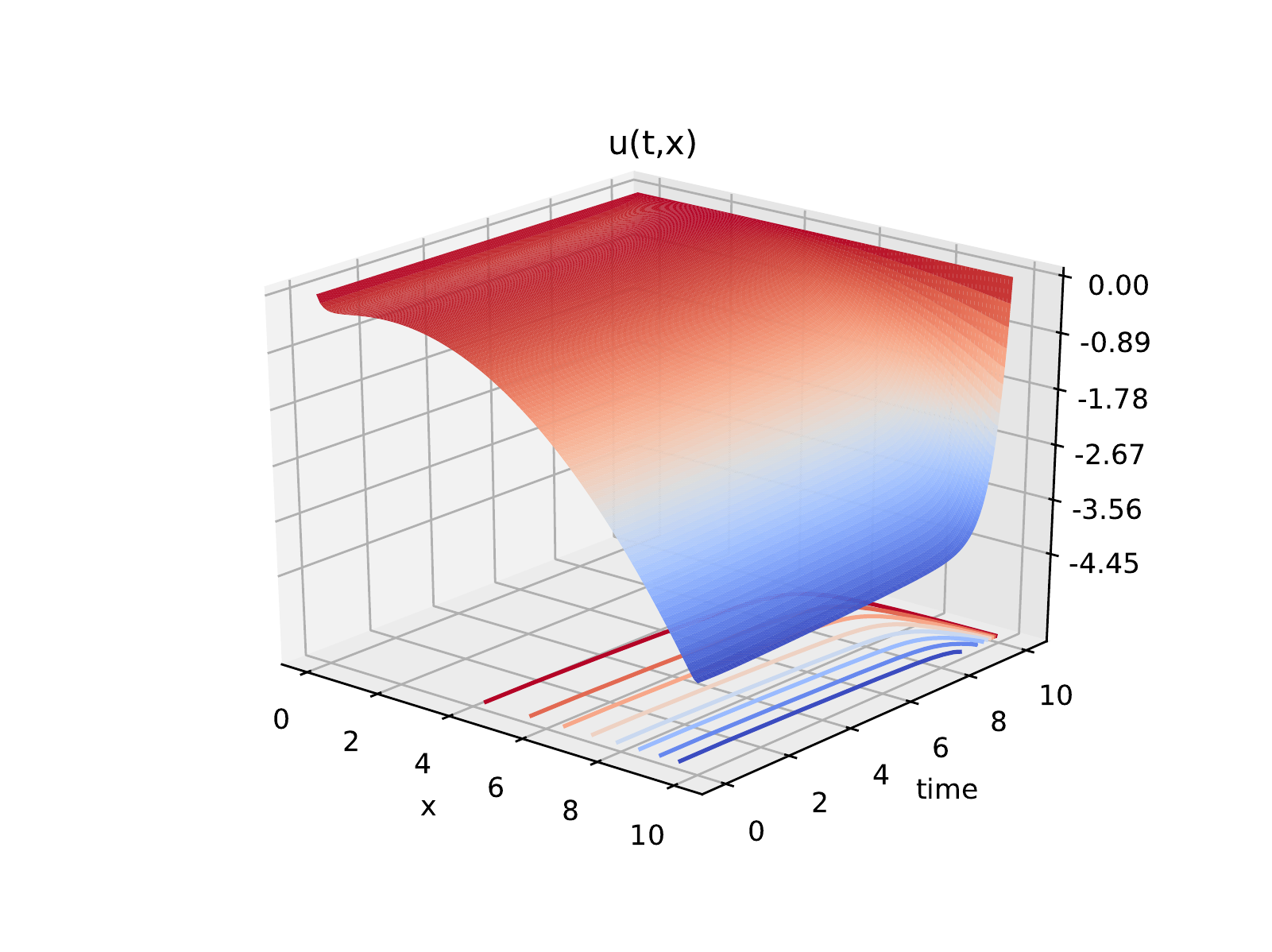}
  \caption{Evolution of $u$}
\end{subfigure}

\caption{Evolution of $m$ and $u$ as functions of $(t,x)$ in the baseline setting with $a= 0.5, x_0 = 2.0, r=0.5, \sigma=1.0$, and $L=10$, $T=10$, $q=0.1$, $\epsilon = q^2$. }
\label{fig:mfg-dyn-evol-m-u}
\end{figure}

{\small
\bibliographystyle{abbrv}
\bibliography{paper-neuronal-sysrisk-bib}
}

\appendix

\section{Derivation of the KFP equation}
\label{sec:deriv-KFP}

We shall derive the KFP for the transition
 probability $\, p(t, x) \,$ of stochastic process  $\, \mathcal X_{t}\, $, $\, t \ge 0 \,$ represented by 
\begin{equation} \label{eq: A1} 
\mathcal X_{t} \, =\,  \mathcal X_{0} + \int^{t}_{0} (-a) ( \mathcal X_{s} - \mathbb E [ \mathcal X_{s}] ) {\mathrm d} s + W_{t} + \int^{t}_{0} \mathbb E [ \mathcal X_{s}] {\mathrm d} \mathcal M_{s} - \alpha \int^{t}_{0} \mathbb E [ \mathcal X_{s} ] {\mathrm d}_{s} \mathbb E [ \mathcal M_{s}]  \, , 
\end{equation}
where $\, \alpha \ge 0 \,$. Taking the expectations of both sides, we obtain a linear integral equation 
\[
\overline{x}_{t} \, :=\,  \mathbb E [ \mathcal X_{t}] \, =\,  \mathbb E [ \mathcal X_{0}] + (1-\alpha ) \int^{t}_{0} \mathbb E [ X_{s}] {\mathrm d}_{s} \mathbb E [ \mathcal M_{s} ] \, =\,  \overline{x}_{0} + (1-\alpha) \int^{t}_{0} \overline{x}_{s}\dot{e}_{s} {\mathrm d} s  \, ; \quad t \ge 0 \, .  
\]
Solving this equation, we have $\, \overline{x}_{t} \, =\, \mathbb E [ \mathcal X_{t} ] \, =\,  \overline{x}_{0} \exp ( ( 1- \alpha ) e_{t} ) \,$, $\, t \ge 0 \,$. Then substituting it into (\ref{eq: A1}), we have 
\[
\mathcal X_{t} \, =\,  \mathcal X_{0} + \int^{t}_{0} [ (-a) ( \mathcal X_{s} - \overline{x}_{s}) - \alpha \overline{x}_{s} \, \dot{e}_{s} ] {\mathrm d} s + W_{t} + \int^{t}_{0 } \overline{x}_{s} {\mathrm d} \mathcal M_{s} \, ; \quad t \ge 0 \, . 
\]

For a given smooth function $\, \varphi : [0,  \infty) \to \mathbb R \,$ with a compact support so that $\, \varphi (0) \, =\,  0 \,$, we apply the change of variable formula for the c\`ad\`ag semimartingale to obtain 
\[
\varphi (   {\mathcal X}_{t+h} ) \, =\,  \varphi (   {\mathcal X}_{t} ) + \int^{h}_{0}\varphi^{\prime}(   {\mathcal X}_{t+u} ) [(-a)  ({\mathcal X}_{t+u}  - \overline{x}_{t+u})  - \alpha \overline{x}_{s} \dot{e}_{s}] {\mathrm d}  u + \int^{h}_{0} \varphi ^{\prime}(   {\mathcal X}_{t+u}) {\mathrm d}   {W}_{t+u} 
\]
\[
+ \sum_{0 \le u < h }[\varphi (  {\mathcal X}_{t+u}) - \varphi  (   {\mathcal X}_{t+u-})  ]  + \frac{\,1\,}{\,2\,} \int^{h}_{0} \varphi^{\prime\prime} (   {\mathcal X}_{t+u} ) {\mathrm d}_{u} \langle   {\mathcal X}\rangle_{t+u}
\]
for $\, t \ge  0 \,$, $\, h > 0\,$, where $\, {X}_{t+u-}\,$ is the left limit $\, \lim_{s\uparrow u} {X}_{t+s} \,$, $\, t \ge 0 \,$, $\, u \in [0, h)\,$. Note that if there exists a jump in  $\, {X}_{\cdot}\,$ at time $\, t+u\,$ for some $\, u \in [0, h)\,$, then there is a jump in $\,\varphi ( {X}_{\cdot})\,$ from  $\, \varphi ({X}_{t+u-}) \, =\,  \varphi (0) \, =\,  0 \,$ to $\, \varphi ( {X}_{t+u}) \, =\,  \varphi (\overline{x}_{t+u}) \,$. For $\, h > 0 \,$, multiplying by $\,1/h\,$, taking expectations, letting $\, h \downarrow 0\,$, we obtain 
\[
\partial_{t} \int^{\infty}_{0} \varphi (x) {p}(t, x) {\mathrm d} x \, =\,  \int^{\infty}_{0} \varphi^{\prime}(x) [ (-a) (x - \overline{x}_{t} ) - \alpha \overline{x}_{t} \dot{e}_{t} ]   {p}(t, x) {\mathrm d} x + \frac{\,1\,}{\,2 } \int^{\infty}_{0} \varphi^{\prime\prime} (x) {p}(t, x) {\mathrm d} x + \varphi (\overline{x}_{t}) \dot{e}_{t} 
\]
for $\, t \ge  0 \,$. Note that since the intensity of jumps in $\, {\mathcal X}_{\cdot}\,$ occurs at the rate $\, {\dot{\widehat{e}}}_{\cdot}\,$, the last term converges in probability, as $\,h \downarrow 0 \,$, 
\begin{equation} 
\frac{\,1\,}{\,h\,}\sum_{0 \le u < h }[\varphi ({\mathcal X}_{t+u}) - \varphi  ( {\mathcal X}_{t+u-})  ] \, =\,  \frac{\,1\,}{\,h\,} \sum_{0 \le u < h}\varphi ( \overline{x}_{t+u})  \xrightarrow[h \downarrow 0]{}  \varphi ( \overline{x}_{t})  \, {\dot {e}}_{t} \, =\,  \int^{\infty}_{0}  \varphi ( x) \dot{e}_{t} \delta_{ \overline{x}_{t}} ( {\mathrm d} x)  \, , 
\end{equation}
where $\, \delta_{a} ( {\mathrm d} x ) \,$ is the Dirac delta measure at $\,a\,$. Repeating the integration by parts for the other terms, we conclude the Fokker-Planck equation of the density function 
\begin{equation} \label{eq: FP0} 
\partial_{t} {p}(t, x) + \partial_{x} [ (-a(x- \overline{x}_{t}) - \alpha \overline{x}_{t}\dot{  e}_{t} ) {p}(t, x)] - \frac{\,1\,}{\,2 \,} \partial^{2}_{xx} {p}(t, x) \, =\,  \dot{ e}_{t} \, \delta_{ \overline{x}_{t}} ( {\mathrm d} x)  \, 
\end{equation}
for $\, t \ge 0\, $, $ \, x \in (0, \infty) \, $ with $\, \overline{x}_{t} \, =\,  \int^{\infty}_{0} x p(t, x) {\mathrm d} x \,$, $\, t \ge 0 \,$. 

For the boundary condition it is natural to assume that for every $\, t \ge 0 \,$ 
\begin{equation} \label{eq: boundary0} 
\lim_{x \downarrow 0} {p}(t, x) \, =\,  0 , \quad  \lim_{x\to + \infty}  \, {p}(t, x) \, =\, 0 \,  , \quad\lim_{x\to +\infty} \partial_{x} {p}(t, x) \, =\,  0 \;.  
\end{equation}
Then since $\, \, \mathbb P ( {X}_{t} \in [0, \infty) ) \, =\,  \int^{\infty}_{0} {p}(t, x) {\mathrm d} x \, =\,  1\,$, interchanging the order of differentiation and integration, substituting (\ref{eq: FP0}) and integrating once, we obtain 
\begin{equation*}
\begin{split} 
0 \, & =\,  \frac{\,\partial \,}{\,\partial t\,} \int^{\infty}_{0} {p}(t, x) {\mathrm d} x \, =\,  \int^{\infty}_{0} \partial_{t}  {p}(t, x) {\mathrm d} x \, = \frac{\,1\,}{\,2\,} \partial_{x} {p}(t, 0) + \dot{{e}}_{t} \, , 
\end{split}
\end{equation*}
where we used (\ref{eq: boundary0}) to compute the definite integrals. Thus 
\begin{equation} \label{eq: edott}
\dot{ {e}}_{t} \, =\,  \frac{\, {\mathrm d}\,}{\, {\mathrm d}t \,} \mathbb E [ { \mathcal M}_{t} ] \, =\,  - \frac{\,1\,}{\,2 \,} \partial_{x} p(t, 0) \, , 
\end{equation}
and we have the nonlinearity term in (\ref{eq: FP0}).

\section{Proof of Theorem \ref{thm:characterize-statio-p-e0}}
\label{sec:proofThmCharac}

\begin{proof}
{\bf (i) } Let us assume that $a>0$.
 Without the Dirac term, the ODE \eqref{eq:statioFP-eq}--\eqref{eq:statioFP-dot-et} has the following set of solutions~:
 \be\label{general_solution}
 x \mapsto e^{-a(x-x_0)^2-2x_0e_0(x-x_0)} \left( A + B \int_0^x  e^{a(y-x_0)^2+2x_0e_0(y-x_0)} dy \right) ,
 \ee
 for some constants $A$ and $B$.
 
Hence the solution to the ODE \eqref{eq:statioFP-eq}--\eqref{eq:statioFP-dot-et} must have this form before and after $x_0$ with a change of regime at the point $x_0$. We solve separately on each interval $[0,x_0)$ and $(x_0,\infty)$, and look for a continuous density function $p$.

\noindent {\bf On the interval $[0,x_0)$}~:
Take a solution of the form of \reff{general_solution}. The boundary condition $p(0)=0$ implies $A=0$. The boundary condition $p'(0)=2e_0$ implies $B=2 e_0$. Hence the solution is~:
\be\label{sol_before_x0}
	p(x) 
	&=& 2 e_0 e^{-a(x-x_0)^2-2x_0e_0(x-x_0)}  \int_0^x  e^{a(y-x_0)^2+2x_0e_0(y-x_0)} dy\\
	&=&   2 e_0 \int_0^x  e^{ay^2 + 2 x_0 (e_0-a) (y-x) -ax^2} dy.
\ee

\noindent {\bf On the interval $(x_0,\infty)$}~:
 Take a solution of the form of \reff{general_solution}. If $p$ is continuous, then we must have~:
 \b*
	 p(x) = e^{-a(x-x_0)^2-2x_0e_0(x-x_0)} \left( p(x_0) + B   \int_{x_0}^x  e^{a(y-x_0)^2+2x_0e_0(y-x_0)} dy \right),
 \e*
 where $p(x_0)$ is given by \reff{sol_before_x0} at point $x_0$. Now the dynamics \eqref{eq:statioFP-eq}--\eqref{eq:statioFP-dot-et} indicates formally that~:
 \b*
	 -\frac{p'(x_0+)-p'(x_0-)}{2} &=& e_0.
  \e*
  Hence, according to \reff{sol_before_x0}, we have~:
  \b*
	  - 2 x_0 e_0 p(x_0) + B &=& p'(x_0+) = - 2 e_0 + p'(x_0-) =  - 2 e_0 -2 x_0 e_0 p(x_0) + 2e_0.
  \e*
 Therefore $B=0$ and $\mathcal X_{\cdot}$ has a gaussian distribution on the left side of $x_0$, i.e.~:
 \b*
	 p(x) = p(x_0) e^{-a(x-x_0)^2-2x_0e_0(x-x_0)}. 
 \e*

\noindent {\bf Global solution}~:
 Combining the results on the two intervals, we obtain~\eqref{eq:FPstationary-solutionp}.
 
\noindent {\bf Characterization of $e_0$}~: 
 It remains to derive $e_0$ which satisfies the two relations~:
 \begin{equation}
 \label{eq:relations-e0}
	 \int_0^\infty x p(x) dx \;=\; x_0, \qquad \mbox{and} \qquad   \int_0^\infty  p(x) dx \;=\; 1.
 \end{equation}
 
 Let us assume that $\int_0^\infty p(x) dx=1$. Then, one can check by direct computation that~:
 \b*
 \int_{0}^{\infty} x p(x) dx 
 \,=\, - \frac{x_0}{a} \left(e_0- a\right)\int_{0}^{\infty} p(x) dx  + \frac{2 e_0}{\tau  ^2} x_0 \,=\, x_0.
 \e*
  Hence $e_0$ is determined by the relation $\int_0^\infty p(y)dy=1$. Using the notation $\tau   = \sqrt{2a}$, this relation rewrites~:
 \b*
 \frac{1}{2e_0} &=&  \int_{x=0}^{x_0}  \int_{y=0}^x e^{-a x^2 - 2 x_0 (e_0-a) (x-y) + a y^2} dy dx  +  \int_{x=x_0}^{\infty}  \int_{y=0}^{x_0} e^{-a x^2 - 2 x_0 (e_0-a) (x-y) + a y^2} dy dx\\
&=& \int_{y=0}^{x_0}  \int_{x=y}^{x_0} e^{-a x^2 - 2 x_0 (e_0-a) (x-y) + a y^2} dx dy  + \int_{y=0}^{x_0} \int_{x=x_0}^{\infty}   e^{-a x^2 - 2 x_0 (e_0-a) (x-y) + a y^2} dx dy\\ 
&=& \int_{y=0}^{x_0} e^{a y^2 + 2 x_0 (e_0-a) y } \int_{x=y}^{\infty} e^{-a x^2 - 2 x_0 (e_0-a) x} dx dy\\
&=& \int_{y=0}^{x_0} e^{\frac{1}{2} \left( \tau   y + \frac{2 x_0}{\tau  } (e_0-a) \right)^2 } \int_{x=y}^{\infty} e^{-\frac{1}{2} \left( \tau   x + \frac{2 x_0}{\tau  } (e_0-a) \right)^2 }  dx dy\\
&=& \frac{1}{\tau  } \int_{y=\frac{2 x_0}{\tau  } (e_0-a)}^{\tau   x_0 + \frac{2 x_0}{\tau  } (e_0-a)} e^{\frac{y^2}{2} } \int_{x=\frac{1}{\tau  } (y- \frac{2 x_0}{\tau  } (e_0-a))}^{\infty} e^{-\frac{1}{2} \left( \tau   x + \frac{2 x_0}{\tau  } (e_0-a) \right)^2 }  dx dy\\
&=& \frac{1}{\tau  ^2} \int_{y=\frac{2 x_0e_0}{\tau  } - \tau   x_0}^{\frac{2 x_0e_0}{\tau  } }  e^{\frac{y^2}{2}} \int_{x=y}^{\infty} e^{-\frac{x^2}{2}}  dx dy.
 \e*
 Hence $e_0$ is given by~\eqref{eq:FPstationary-solutione0}. 

\noindent {\bf Uniqueness of $e_0$}~: To conclude the proof, let us show that there is a unique $e_0$ satisfying~\eqref{eq:FPstationary-solutione0}.
Let us define the continuous functions~:
\[
F(u) \, =\,   \frac{1}{\tau  ^2} \int_{y=\frac{2 x_0 u }{\tau  } - \tau   x_0}^{\frac{2 x_0 u }{\tau  } }  e^{\frac{y^2}{2}} \int_{x=y}^{\infty} e^{-\frac{x^2}{2}}  dx dy \,  , \quad G(u) \, =\, \frac{1}{\, 2 u\, } ,  \quad 0 < u < \infty \,. 
\] 
and show that there exists $\, e_{0} \in (0,+\infty)\,$ such that $\, F(e_{0}) \, =\, G(e_{0})\,$.

One can check that for every $x > 0$,
\[
\frac{x}{ \, 1 + x^{2} \, } e^{-x^{2} / 2} \le \int^{\infty}_{x} e^{-y^{2}/ 2} {\mathrm d} y \le \frac{1}{ \, x \, } e^{-x^{2} / 2} \,.
\]
Using this inequality, we obtain the following lower and upper bounds of $\, F(u) \,$, for every $u> \tau  ^2/2$~: 
\begin{equation} \label{eq: lubound}
\frac{1}{\, 2 \tau  ^{2}\, } \log \Big( \frac{ \tau  ^{2} + 4 x_{0}^{2} u^{2}}{\, \tau  ^{2} + ( 2 x_{0} u - \tau  ^{2} x_{0} )^{2}\, }\Big) \, =\, \frac{1}{\tau  ^{2}} \int^{ \frac{2x_{0} u }{\tau  }}_{ \frac{2x_{0} u }{\tau  } - \tau   x_{0}} \frac{x}{\, 1+x^{2}\, } {\mathrm d} x 
\end{equation}
\[
\le F(u) \le  \frac{1}{\tau  ^{2}} \int^{ \frac{2x_{0} u }{\tau  }}_{ \frac{2x_{0} u }{\tau  } - \tau   x_{0}} \frac{1}{\, x\, } {\mathrm d} x \, =\,  \frac{1}{\, \tau  ^{2}\, } \log \Big( \frac{ 2 u}{\, 2 u - \tau  ^{2}} \Big).
\]
Thus $\, \lim_{u\to \infty}F (u) \, =\,  0\,$, and hence $\, \lim_{u\to \infty} (F(u) - G(u)) \, =\, 0 \,$.  

Moreover, given $\, \tau   > 0$ and $x_{0} > 0 \,$, let us choose $\,t_{1} > 1\, / \, 2 \,$ such that~:
\begin{equation}\label{eq:property-t1}
\, 2 a x_{0}^{2} \, =\,  \tau  ^{2} x_{0}^{2} > \frac{ e^{1/t_{1}} - 1}{\,  4 t_{1}^{2} - (2 t_{1} - 1)^{2} e^{1/t_{1}}\, } > 0\,.  
\end{equation}

Then $\, F(u_{1}) - G(u_{1}) > 0\,$ at $\,u_{1} \, =\, t_{1} \tau  ^{2} > \tau  ^{2} \, / \, 2\,$, because  \eqref{eq: lubound} and \eqref{eq:property-t1} imply~:
\[
F(t_{1} \tau  ^{2}) \ge \frac{1}{\, 2 \tau  ^{2}\, } \log \Big( \frac{\, 1 + 4 t_{1}^{2} \tau  ^{2} x_{0}^{2}\, }{ 1+ (2 t_{1} - 1)^{2} \tau  ^{2} x_{0}^{2}} \Big) > \frac{1}{\, 2 t_{1} \tau  ^{2}\, } \, =\, G(t_{1} \tau  ^{2}) \, .   
\]
Combining this observation with the continuity of $\, F(\cdot) - G(\cdot) \,$ and the fact that $\, \lim_{u\to 0+} (F(u) - G(u)) \, =\, -\infty\,$, we claim the existence of $\, e_{0} \in (0, u_{1}) \,$ such that $\, F(e_{0}) \, =\,  G(e_{0}) \, =\,  2 \, / \, e_{0}\,$.  Indeed,
by differentiation of $\, F(\cdot)\,$ and by change of variables we obtain~:
\[
F^{\prime} (u) \, =\,  \frac{\, 2 x_{0}\, }{\tau  ^{3}} \Big[ e^{ \frac{c^{2}_{1}}{2}} \int^{\infty}_{ c_{1}} e^{-x^{2}/2} {\mathrm d} x - e^{ \frac{(c_{1}-c_{2})^{2}}{2}} \int^{\infty}_{ c_{1} -c_{2} } e^{-x^{2}/2} {\mathrm d} x \Big] \Big \vert _{\{c_{1} = 2 x_{0} u / \tau   \, ,\, \,  c_{2} = \tau   x_{0}\}} 
\]
\[
\, =\,  \frac{\, 2 x_{0}\, }{\tau  ^{3}} \Big[ \int^{\infty}_{0} \Big( \frac{v}{\,(c_{1}^{2} + v^{2})^{1/2}\,}  -  \frac{v}{\, [(c_{1}-c_{2})^{2} + v^{2}]^{1/2}\,}  \Big )  e^{-v^{2}/2} {\mathrm d} v \Big] \Big \vert _{\{ c_{1}= 2 x_{0} u / \tau   \, ,\, \,  c_{2} = \tau   x_{0}\}}
\]
and hence, $\, F^{\prime}(u) >  0 \,$ for $\, 0 < u < \tau  ^{2}\, /\,  4 \,$ and $\, F^{\prime}(u) \le 0 \,$ for $\, u \ge \tau  ^{2}\, /\,  4\,$. Thus the function $\, F(u) \,$ is unimodal and takes the unique maximum at $\, u \, =\, \tau  ^{2} \, / \, 4\,$. Since $\,u \mapsto G(u) \,$ is monotonically decreasing to zero, $\, F(\cdot) - G(\cdot) \,$ is also unimodal with $\, \lim_{u\to \infty} (F(u) - G(u)) \, =\, 0 \,$. Therefore, the solution $\, e_{0}\,$ to $\, F(u) \, =\, G(u) \,$ is at most one in $\, (0, 2 a t_{1})\,$. 

\medskip 

\noindent {\bf Upper bound for $\, e_{0}\,$ in (\ref{eq:FPstationary-solution-e0est})}. Let us evaluate an upper bound $\, 2 a t_{1} \,$ for $\, e_{0}\,$. The condition (\ref{eq:property-t1}) is equivalent to 
\[
4 t ^{2}_{1} ( 2 a x_{0}^{2} - e^{1/t_{1}} ) + 4 (2 a x_{0}^{2}) t_{1} + 1 - e^{1/t_{1}}( 1 + 2 a x_{0}^{2}) > 0 \, . 
\]
This holds if $\, 2 a x_{0}^{2} \ge e^{1/t_{1}}\,$ and if $\, 4 ( 2 a x_{0}^{2})t_{1}  + 1 - e^{2} ( 1 + 2 a x_{0}^{2} ) \ge 0 \,$, because $\, t_{1} > 1\, /\,  2\,$. Thus it suffices to have $\, t_{1} \ge 1 / (\log (2 a x_{0}^{2}) \,$ and $\, t _{1} \ge ( e^{2} ( 1 + 2 a x_{0}^{2}) - 1 ) / ( 4 a x_{0}^{2}) \,$. This implies (\ref{eq:FPstationary-solution-e0est}). 

\bigskip 

\noindent {\bf (ii) } Let us assume that $a=0$. The proof of~\eqref{eq:FPstationary-solutionp-a0} is very similar to the proof of~\eqref{eq:FPstationary-solutionp} so we do not reproduce it. As for the characterization of $e_0$, note that the second equality in~\eqref{eq:relations-e0} is always satisfied in this case. The first equality leads to~\eqref{eq:FPstationary-solutionp-a0}.
\end{proof}

\section{Proof of (\ref{eq: RENEW})} \label{Appendix: (2.31)}
With the renewal theory, we shall show the last inequality in (\ref{eq: RENEW}), namely, 
\begin{equation} \label{eq: RENEWa}
\sum_{k=1}^{\infty} \mathbb P ( \sup_{0 \le s \le t } (W_{s} + s)^{+} \ge k x_{0} ) \le \frac{\, t\, }{x_{0}} \, ; \quad t \ge 0 \, . 
\end{equation}
under the assumption $\, x_{0} \ge 1\,$. Let us denote by $\, \xi\,$ the first passage time of Brownian motion with constant drift for the level $\, x_{0}  (> 0)\,$, i.e, $\, \xi \, :=\, \inf \{ s > 0 : W_{s} + s \ge x_{0}\} \,$, and consider the sequence $\, \xi, \xi_{1}, \xi_{2}, \ldots \,$ of independent copies of $\, \xi\,$, the cumulative sum $\, S_{n} \, :=\, \xi_{1} + \cdots + \xi_{n}\,$ and the renewal process $\, N(t) \, :=\, \sum_{k=1}^{\infty} {\bf 1}_{\{S_{k} \le t\}}\,$, $\, t \ge 0\,$. The density function and the Laplace transform of $\, \xi\,$ is well known, e.g., section 3.5.C of \textsc{Karatzas \& Shreve}~\cite{MR1121940}. Then the right hand of (\ref{eq: RENEWa}) becomes 
\[
\sum_{k=1}^{\infty} \mathbb P ( \sup_{0 \le s \le t } (W_{s} + s)^{+} \ge k x_{0} ) \, =\,  \sum_{k=1}^{\infty} \mathbb E[ {\bf 1}_{\{S_{k} \le t\}} ] \, =\,  \mathbb E[ N(t) ] \, . 
\]
which satisfies the renewal equation 
\begin{equation} \label{eq: RENEWc}
m(t) \, :=\, \mathbb E[ N(t) ] \, =\,  \mathbb P ( \xi \le t )  + \int^{t}_{0} \mathbb E[ N(s)] \mathbb P ( \xi \in {\mathrm d}  s) \, ; \quad t \ge 0 \, .  
\end{equation}

Hence applying the Laplace transforms  
\[
\widehat{ m }(\theta) \, :=\,   \int^{\infty}_{0} e^{- \theta t} m(t) {\mathrm d} t \,  \, , \quad \widehat{f}(\theta) \, :=\,  \mathbb E[ e^{-\theta \xi} ] \, =\,  \exp \big( x_{0} (1 - \sqrt{ 2 \theta + 1 \, }) \big) \, ;  \quad \theta > 0 \, 
\]
to the renewal equation (\ref{eq: RENEWc}), we solve it in terms of  Laplace transforms : 
\[
\widehat{m}(\theta) \, =\, \frac{ \widehat{f}(\theta) }{\, \theta ( 1 - \widehat{f}(\theta) ) \, } \, ; \quad \theta > 0 \, . 
\]
Note that the Laplace transform of $\, \ell(t) \, :=\, t \, / \, x_{0}\,$, $\, t \ge 0 \,$ is 
\[
\widehat{\ell}(\theta) \, =\,  \int^{\infty}_{0} e^{-\theta t } \ell (t) {\mathrm d} t \, =\,  \frac{1}{\, x_{0}\, \theta^{2}\, } \, ; \quad \theta > 0 \, . 
\]

If $\, x_{0} \ge 1 \,$, by direct calculations we verify that $\, \widehat{\ell} (\cdot) - \widehat{m}(\cdot) \, =\, \widehat{ \ell - m }(\cdot) \,$ is completely monotone: 
\begin{equation} \label{eq: RENEWb}
(-1)^{k} \frac{ {\mathrm d}^{k} }{\, {\mathrm d} \theta^{k}\, }  [ \widehat{\ell - m}] (\theta ) \ge 0 \, ; \quad k \in \mathbb N_{0} \, , \, \, \theta > 0 \, . 
\end{equation}
Note that if $\, x_{0} < 1\,$, then (\ref{eq: RENEWb}) does not hold for some $\, \theta\,$ and some $\,k \in \mathbb N_{0}\,$. Thus by \textsc{Post}'s inversion formula (Theorem XIII.10.3 of \textsc{Feller}~\cite{MR0228020} and also see Theorem 1.4 of \textsc{Schilling, Song \& Vondracek}~\cite{MR2978140}) of Laplace transforms with (\ref{eq: RENEWb}) we conclude that if $\,x_{0} \ge 1\,$, then 
\[
\sum_{k=1}^{\infty} \mathbb P ( \sup_{0 \le s \le t } (W_{s} + s)^{+} \ge k x_{0} )  \, =\, m(t) \, =\,  \lim_{k\to \infty} \frac{\, (-1)^{k}\, }{\, k!\, }  \Big( \frac{k}{\, t \, } \Big)^{k+1} \frac{ {\mathrm d}^{k} \widehat{m} }{ {\mathrm d} \theta^{k} } (\theta) \Big \vert_{\theta \, =\, k\, /\, t}  
\]
\[
\le 
 \lim_{k\to \infty} \frac{\, (-1)^{k}\, }{\, k!\, }  \Big( \frac{k}{\, t \, } \Big)^{k+1} \frac{ {\mathrm d}^{k}  \widehat{\ell}}{ {\mathrm d} \theta^{k} } (\theta) \Big \vert_{\theta \, =\, k\, /\, t} \, =\,  \ell (t) \, =\,  \frac{t}{\, x_{0}\, } \, ; \quad t \ge 0 \, . 
\]
This is (\ref{eq: RENEWa}), which completes the proof of (\ref{eq: RENEW}).

\end{document}